\newtheorem{theorem}{Theorem}[section]
\newtheorem{lemma}[theorem]{Lemma}
\newtheorem{proposition}[theorem]{Proposition}
\newtheorem{corollary}[theorem]{Corollary}
\theoremstyle{definition}
\newtheorem{definition}[theorem]{Definition}
\newtheorem{claim}[theorem]{Claim}
\theoremstyle{remark}
\newtheorem{remark}[theorem]{Remark}
\newtheorem{example}[theorem]{Example}
\numberwithin{equation}{section}
\DeclareMathOperator{\trip}{\mathsf{trip}}
\DeclareMathOperator{\aexc}{\mathrm{Aexc}}
\DeclareMathOperator{\prom}{\mathsf{prom}}
\DeclareMathOperator{\promotion}{\mathcal{P}}
\DeclareMathOperator{\evacuation}{\mathcal{E}}
\DeclareMathOperator{\SL}{SL}
\newcommand{\fsl}{\mathfrak{sl}}
\DeclareMathOperator{\Inv}{\mathsf{Inv}}
\DeclareMathOperator{\Hom}{\mathrm{Hom}}
\DeclareMathOperator{\SYT}{\mathsf{SYT}}
\DeclareMathOperator{\dis}{\mathfrak{d}}
\newcommand{\newword}[1]{\emph{#1}}
\title{Web bases in degree two from hourglass plabic graphs}
\author[Gaetz]{Christian Gaetz}
\address[Gaetz]{Department of Mathematics, University of California, Berkeley, CA, USA.}
\email{gaetz@berkeley.edu}
\author[Pechenik]{Oliver Pechenik}
\address[Pechenik]{Department of Combinatorics \& Optimization, University of Waterloo, Waterloo, ON, Canada.}
\email{oliver.pechenik@uwaterloo.ca}
\author[Pfannerer]{Stephan Pfannerer}  
\address[Pfannerer]{Department of Combinatorics \& Optimization, University of Waterloo, Waterloo, ON, Canada.}
\email{math@pfannerer-mittas.net}
\author[Striker]{Jessica Striker}
\address[Striker]{Department of Mathematics, North Dakota State University, Fargo, ND, USA.}
\email{jessica.striker@ndsu.edu}
\author[Swanson]{Joshua P. Swanson}
\address[Swanson]{Department of Mathematics, University of Southern California, Los Angeles, CA, USA.}
\email{swansonj@usc.edu}
\date{\today}
   \def\MR#1{}
\begin{document}

\begin{abstract}
    Webs give a diagrammatic calculus for spaces of $U_q(\fsl_r)$-tensor invariants, but intrinsic characterizations of web bases are only known in certain cases. Recently, we introduced \emph{hourglass plabic graphs} to give the first such $U_q(\fsl_4)$-web bases. Separately, Fraser introduced a web basis for \emph{Pl\"{u}cker degree two} representations of arbitrary $U_q(\fsl_r)$. Here, we show that Fraser's basis agrees with that predicted by the hourglass plabic graph framework and give an intrinsic characterization of the resulting webs. A further compelling feature with many applications is that our bases exhibit \emph{rotation-invariance}. Together with the results of our earlier paper, this implies that hourglass plabic graphs give a uniform description of all known rotation-invariant $U_q(\fsl_r)$-web bases. Moreover, this provides a single combinatorial model simultaneously generalizing the Tamari lattice, the alternating sign matrix lattice, and the lattice of plane partitions. As a part of our argument, we develop properties of \emph{square faces} in arbitrary hourglass plabic graphs, a key step in our program towards general $U_q(\fsl_r)$-web bases.
\end{abstract}

\maketitle

\section{Introduction}

\emph{Web bases} were introduced by Kuperberg \cite{Kuperberg} to give a diagrammatic calculus for the representation theory of the quantum groups $U_q(\fsl_2)$ and $U_q(\fsl_3)$, with applications to quantum link invariants. Kuperberg's bases have since found further application in areas as diverse as the geometric Satake correspondence \cite[Thm.~1.4]{Fontaine-Kamnitzer-Kuperberg}, cluster algebras \cite{Fomin-Pylyavskyy-advances}, cyclic sieving \cite{Petersen-Pylyavskyy-Rhoades}, quantum topology \cite{Costantino.Le, Higgins}, and dimer models \cite{Douglas-Kenyon-Shi}. A major challenge has been to find generalizations of these bases for quantum groups of higher rank. 

In \cite{Gaetz.Pechenik.Pfannerer.Striker.Swanson:4row}, we introduced \emph{hourglass plabic graphs} (see \S\ref{sec:hourglass-background} below) to construct a web basis for spaces of $U_q(\fsl_r)$-invariants for $r\leq 4$, characterized intrinsically by the \emph{fully reduced} condition. Our construction recovers Kuperberg's bases for $r=2$ and $3$. Here, we extend the theory of hourglass plabic graphs to $U_q(\fsl_r)$ for arbitrary $r$, but restrict to representations of \emph{Pl\"ucker degree two}. 

Fraser \cite{Fraser-2-column} introduced a map $\mathcal{F}$ from $2$-column rectangular standard Young tableaux to webs. The image of $\mathcal{F}$ forms a web basis for
$\Inv_{\SL_r}\left(V^{\otimes 2r}\right) \coloneqq \Hom_{\SL_r}\left(V^{\otimes 2r}, \mathbb{C} \right)$
which coincides with Lusztig's dual canonical basis. We call this setting \emph{Pl\"ucker degree two} since the invariant space can be identified with a graded piece of the homogeneous coordinate ring of the Grassmannian spanned by products of two Pl\"{u}cker coordinates of disjoint support.   

We show that, in this context, the fully reduced hourglass plabic graphs are exactly the outputs of Fraser's construction. This condition thus intrinsically determines a web basis for $\Inv_{\SL_r}\left(V^{\otimes 2r}\right)$, and indeed by \cite[Thm.~2.9]{Gaetz.Pechenik.Pfannerer.Striker.Swanson:4row} for its quantum deformation $\Inv_{U_q(\fsl_r)}(V_q^{\otimes 2r})$. Here $V$ denotes the standard representation $\mathbb{C}^r$ of $\SL_r(\mathbb{C})$ and $V_q$ its quantum deformation, a representation of $U_q(\mathfrak{sl}_r)$.

A web basis is \emph{rotation-invariant} if diagrammatic rotation sends basis elements to basis elements, possibly up to a sign. This rotation of webs corresponds to cyclic permutation of the tensor factors in $V_q^{\otimes 2r}$. The rotation-invariance of the web bases described above is key to many applications.

\begin{theorem}\label{thm:basis}
The collection $\mathscr{B}_q^{2r}$ of tensor invariants of fully reduced $r$-hourglass plabic graphs of Pl\"{u}cker degree two is a rotation-invariant web basis for $\Inv_{U_q(\fsl_r)}(V_q^{\otimes 2r})$.
\end{theorem}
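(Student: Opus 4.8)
The plan is to reduce the theorem to two external inputs and one main combinatorial identification. The external inputs are Fraser's theorem that the image of $\mathcal{F}$ is a basis of $\Inv_{\SL_r}(V^{\otimes 2r})$ agreeing with the dual canonical basis, and the quantum lifting result \cite[Thm.~2.9]{Gaetz.Pechenik.Pfannerer.Striker.Swanson:4row}, which upgrades a fully reduced web basis at $q=1$ to a basis of $\Inv_{U_q(\fsl_r)}(V_q^{\otimes 2r})$. The bulk of the work is then to show that the fully reduced $r$-hourglass plabic graphs of Pl\"ucker degree two are exactly the diagrams produced by $\mathcal{F}$. As a preliminary, I would first set up the indexing: using the trip data $\trip$ attached to a reduced hourglass graph, as in the $r\le 4$ theory, I would establish a bijection between fully reduced $r$-hourglass plabic graphs of Pl\"ucker degree two and standard Young tableaux of the $2$-column rectangular shape $(2^r)$, matching the domain of $\mathcal{F}$.

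\textbf{Identification of webs.} The crux is to prove that the web read off from the fully reduced hourglass graph indexed by a tableau $T$ coincides, as a planar tensor diagram with fixed boundary, with Fraser's web $\mathcal{F}(T)$. Since both constructions yield planar diagrams on the same $2r$ boundary vertices, I expect the comparison to be essentially local: decompose each diagram into elementary pieces and match them vertex by vertex. The key technical point is to control the internal structure of a fully reduced hourglass graph of Pl\"ucker degree two, where the relevant internal faces are \emph{square faces}; one must show that these occur in a constrained way and resolve into precisely the elementary tensors appearing in $\mathcal{F}(T)$. Developing the requisite structural properties of square faces in arbitrary hourglass plabic graphs is, as flagged in the abstract, the technical heart of the argument, and I expect it to be the main obstacle: Fraser's recipe and the hourglass reduction are defined in entirely different terms, so matching their outputs requires a genuinely new understanding of how square faces are forced to appear and to resolve.

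\textbf{Basis and quantum lift.} Granting the identification, the classical family $\mathscr{B}^{2r}$ of tensor invariants of fully reduced graphs equals the image of $\mathcal{F}$, which Fraser proved is a basis of $\Inv_{\SL_r}(V^{\otimes 2r})$; hence $\mathscr{B}^{2r}$ is a basis. Because these webs are fully reduced, \cite[Thm.~2.9]{Gaetz.Pechenik.Pfannerer.Striker.Swanson:4row} applies and promotes this classical basis to the asserted basis $\mathscr{B}_q^{2r}$ of $\Inv_{U_q(\fsl_r)}(V_q^{\otimes 2r})$, with no further representation-theoretic input needed.

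\textbf{Rotation-invariance.} Finally, I would show that $\mathscr{B}_q^{2r}$ is closed under rotation up to sign. Rotation of the boundary disk is a manifestly well-defined operation on hourglass plabic graphs, so the only content is that it preserves the fully reduced, Pl\"ucker-degree-two condition; I would verify this directly, again using the square-face analysis to handle the faces created or destroyed as the boundary is rotated. Under the tableau indexing of the first step this rotation corresponds to promotion $\prom$, which permutes standard Young tableaux of shape $(2^r)$, so rotation permutes $\mathscr{B}_q^{2r}$ up to the expected sign --- precisely the rotation-invariance that is not transparent from Fraser's construction but becomes automatic in the hourglass framework.
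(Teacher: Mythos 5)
Your proposal is correct and matches the paper's own proof: the paper establishes that Fraser graphs are precisely the contracted fully reduced hourglass plabic graphs of Pl\"ucker degree two (\Cref{thm:fraser-implies-fully-reduced,thm:fully-reduced-implies-fraser}, the latter resting on the square-face analysis of \Cref{sec:square_faces}), then invokes Fraser's basis theorem together with the quantum lift of \cite[Thm.~2.9]{Gaetz.Pechenik.Pfannerer.Striker.Swanson:4row}, and obtains rotation-invariance because the class of Fraser (equivalently, fully reduced) graphs is closed under rotation. One minor correction: rotation merely relabels the boundary vertices and neither creates nor destroys faces, so no square-face analysis is needed for that last step.
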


One advantage of our formulation of \Cref{thm:basis} is that it provides an intrinsic graph-theoretic characterization of the basis webs. This stands in contrast to \cite{Fraser-2-column}, where the basis webs are characterized only as the image of $\mathcal{F}$. 

As in \cite{Gaetz.Pechenik.Pfannerer.Striker.Swanson:4row}, we associate to an hourglass plabic graph $G$ a tuple  $\trip_\bullet(G)$ of \emph{trip permutations}, extending the usual \emph{trip permutation} of a plabic graph~\cite{Postnikov-arxiv}. As in \cite{Gaetz.Pechenik.Pfannerer.Striker.Swanson:fluctuating}, we associate to a tableau $T$ a tuple $\prom_\bullet(T)$ of \emph{promotion permutations} encoding the behavior of \emph{jeu de taquin promotion}, building on ideas of Hopkins--Rubey \cite{Hopkins-Rubey}. See \Cref{sec:background} for details.

Our main combinatorial result is as follows; see \Cref{fig:prom_orbit,fig:FraserMap} for an example.  

\begin{theorem} \label{thm:bijection}
The map $\mathcal{F}$ is a  bijection between the set of $r \times 2$ standard Young tableaux and the set of move-equivalence classes of fully reduced $r$-hourglass plabic graphs of standard type and Pl\"ucker degree two. Furthermore, this bijection satisfies $\trip_{\bullet}(\mathcal{F}(T))=\prom_{\bullet}(T)$ and intertwines promotion and evacuation of tableaux with rotation and reflection of hourglass plabic graphs.
\end{theorem}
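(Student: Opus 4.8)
The plan is to make the identity $\trip_\bullet(\mathcal{F}(T)) = \prom_\bullet(T)$ the central hinge of the argument and to deduce bijectivity and equivariance from it, using the principle that the tuple of trip permutations is a complete invariant of move-equivalence classes of fully reduced hourglass plabic graphs. First I would verify that each web $\mathcal{F}(T)$ is in fact a fully reduced $r$-hourglass plabic graph of standard type and Pl\"ucker degree two. Since Fraser's construction \cite{Fraser-2-column} produces webs built from a controlled local structure, this reduces to checking the degree and type conditions directly from the definition of $\mathcal{F}$ and confirming the absence of reducing configurations. The latter is where the promised analysis of \emph{square faces} enters: I would classify the square faces appearing in the image of $\mathcal{F}$ and show that none of them admits a reduction, so that the output is fully reduced.

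The core computation is the trip--promotion identity. I would read off $\trip_\bullet(\mathcal{F}(T))$ by tracing trip strands through Fraser's web, exploiting the explicit column-reading description of $\mathcal{F}$ on a $2$-column tableau. In parallel, I would compute $\prom_\bullet(T)$ from the jeu de taquin promotion of $T$, where the two-column shape makes the promotion permutations especially transparent, building on the framework of \cite{Gaetz.Pechenik.Pfannerer.Striker.Swanson:fluctuating} and \cite{Hopkins-Rubey}. Matching the two tuples entry by entry then establishes $\trip_\bullet(\mathcal{F}(T)) = \prom_\bullet(T)$.

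With this identity in hand, the bijection follows formally. For injectivity: if $\mathcal{F}(T)$ and $\mathcal{F}(T')$ are move-equivalent then they have equal trip tuples, hence $\prom_\bullet(T) = \prom_\bullet(T')$, and since $\prom_\bullet$ is injective on standard Young tableaux we conclude $T = T'$. For surjectivity I would show that any fully reduced graph $G$ of the prescribed type satisfies $\trip_\bullet(G) = \prom_\bullet(T)$ for some $r \times 2$ tableau $T$, using that the image of $\prom_\bullet$ is exactly the set of realizable trip tuples, and then invoke completeness of the trip invariant to conclude that $G$ is move-equivalent to $\mathcal{F}(T)$. The equivariance statement is obtained by comparing the action of rotation on $\trip_\bullet$ with the action of promotion on $\prom_\bullet$: both shift the tuple by the same cyclic rule, so $\trip_\bullet(\rot\, \mathcal{F}(T)) = \trip_\bullet(\mathcal{F}(\mathcal{P}(T)))$, and completeness forces $\rot\, \mathcal{F}(T)$ and $\mathcal{F}(\mathcal{P}(T))$ to be move-equivalent; the reflection--evacuation compatibility is identical with $\refl$ and $\mathcal{E}$ in place of $\rot$ and $\mathcal{P}$.

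I expect the main obstacle to be surjectivity, namely proving that fully reduced hourglass plabic graphs of this type are exhausted, up to moves, by the Fraser webs. This is precisely where a structural understanding of square faces is needed: one must establish the completeness (rigidity) of the trip invariant in the degree-two setting, an analog of Postnikov's theorem \cite{Postnikov-arxiv} that the decorated trip permutation determines a reduced plabic graph up to moves. Establishing that rigidity is the technical heart of the argument, and the square-face properties developed here are the tool for carrying it out.
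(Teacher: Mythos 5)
The hinge of your plan---the ``principle'' that $\trip_\bullet$ is a complete invariant of move-equivalence classes of fully reduced hourglass plabic graphs---is not available, and this is a genuine gap. Such a rigidity theorem is known only for $r\le 4$ (from the earlier hourglass paper); for general $r$ it is precisely the open heart of the program, and in the present paper it can be extracted only as a \emph{corollary} of the classification, not used as an input to it. Both your surjectivity and your equivariance arguments invoke it, together with a second unproved claim (that every trip tuple of a fully reduced degree-two graph lies in the image of $\prom_\bullet$), which is itself essentially equivalent to surjectivity. You say the square-face properties are ``the tool'' for establishing rigidity, but they are far weaker: \Cref{thm:square-fully-reduced} only characterizes when a single square face is fully reduced (namely $m(F)\le r$), and \Cref{thm:square-move-preserves-trip-bullet} only says square moves preserve $\trip_\bullet$. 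The paper's proof of surjectivity (\Cref{thm:fully-reduced-implies-fraser}) needs no hourglass rigidity at all: an Euler-characteristic count (\Cref{lem:faces-are-squares}, via \Cref{lem:structure-of-underlying-plabic,lem:white_minus_black,lem:w-b-equals-aexc}) shows that a connected contracted fully reduced graph of Pl\"ucker degree two has all internal faces squares, all internal black vertices trivalent, and all internal white vertices adjacent to the boundary; one then reads a weighted triangulation directly off $G$ (diagonal weights $r-m(F)$, nonnegative by \Cref{thm:square-fully-reduced}) and checks that Fraser's map returns $G$ itself. The only rigidity used is Postnikov's theorem for the underlying \emph{ordinary} plabic graph $\widehat{G}$.

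A second misstep: you propose to verify that $\mathcal{F}(T)$ is fully reduced by classifying square faces in its image and checking that none ``admits a reduction.'' Full reducedness is not a local condition on faces---it forbids bad double crossings between arbitrary pairs of $\trip_i$- and $\trip_{i\pm 1}$-segments anywhere in the graph---so face-by-face checking cannot establish it; indeed the paper's logic runs in the opposite direction, deducing the square-face criterion \Cref{thm:square-fully-reduced} \emph{from} full reducedness of Fraser graphs. The paper instead proves both $\trip_\bullet(\mathcal{F}(T))=\prom_\bullet(T)$ (\Cref{thm:trip=prom}) and full reducedness of Fraser graphs (\Cref{thm:fraser-implies-fully-reduced}) by induction on removal of \emph{ears} (the web configurations coming from boundary triangles of the triangulation), with a delicate analysis of how trip segments pass through the ossicles. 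What your outline does deliver is injectivity (move-equivalence preserves $\trip_\bullet$, $\trip_\bullet=\prom_\bullet$, and $\prom_\bullet$ determines $T$), which is sound and close to what the paper uses implicitly; but that is the easy part, and the two hard parts---full reducedness of the image and surjectivity---remain unproved in your plan.
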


The move-equivalence classes of \Cref{thm:bijection} are generated by \textit{square moves}. These moves are known to preserve the tensor invariant, so the basis of \Cref{thm:basis} is canonically determined. Furthermore, the move-equivalence classes have interesting combinatorial structure. In particular, we show that the hourglass plabic graph framework unifies the following three classical constructions: the \emph{Tamari lattice}, \emph{alternating sign matrices}, and \emph{plane partitions} in a box; see \Cref{prop:superstandard} and \cite[\S8]{Gaetz.Pechenik.Pfannerer.Striker.Swanson:4row} for details.

The $U_q(\fsl_r)$-web bases for $r \leq 4$ from are also in bijection with tableaux via the same $\trip_{\bullet}=\prom_{\bullet}$ property \cite{Gaetz.Pechenik.Pfannerer.Striker.Swanson:4row}; that work considered rectangular tableaux with at most 4 rows and arbitrarily many columns, whereas this work applies in arbitrarily many rows with at most 2 columns. Thus, \Cref{thm:bijection} also extends this aspect of the hourglass plabic graph framework.

This paper is part of a program aiming at developing web bases for general spaces of $U_q(\fsl_r)$-invariants. In the process of establishing \Cref{thm:basis,thm:bijection}, we develop tools that apply in that generality; see \Cref{thm:square-fully-reduced,thm:square-move-preserves-trip-bullet}.  

The remainder of the paper is organized as follows. \Cref{sec:background} collects background and preliminaries on tableaux and promotion, on hourglass plabic graphs, and on Fraser's map $\mathcal{F}$. In \Cref{sec:trip-prom} we show that $\trip_{\bullet}(\mathcal{F}(T))=\prom_{\bullet}(T)$, establishing part of \Cref{thm:bijection}. \Cref{sec:fraser_is_fr} proves that the graphs $\mathcal{F}(T)$ are fully reduced. \Cref{sec:square_faces} characterizes when square faces are fully reduced and shows that square moves preserve trip permutations. \Cref{sec:fr_is_fraser} completes the proofs of \Cref{thm:basis,thm:bijection} by showing that the fully reduced hourglass plabic graphs are exactly those produced by Fraser's map. These arguments require analogues of \Cref{thm:basis,thm:bijection} for Pl\"{u}cker degree one; these are established in \Cref{sec:1column}. Section~\ref{sec:combinatorial_consequences} contains combinatorial applications of this work to the Tamari lattice and cyclic sieving.

\section{Background and preliminaries}\label{sec:background}

\subsection{Promotion permutations for standard Young tableaux}
A \emph{partition} with $r$ rows is a tuple $\lambda=(\lambda_1,\ldots,\lambda_r)\in \mathbb{N}_{>0}^r$ where $\lambda_1\geq\cdots\geq\lambda_r$. The \emph{diagram} of $\lambda$ is a collection of upper-left justified boxes where the $i$th row from the top has $\lambda_i$ boxes. A \emph{standard Young tableau (SYT) of  shape} $\lambda$ is a bijective filling of the diagram of $\lambda$ with the numbers $\{1,\ldots,n\}$, where $n=\sum_i\lambda_i$, increasing along rows and columns. 
When $\lambda$ is a rectangle with $r$ rows and $d$ columns, we denote this set as $\SYT(r\times d)$ and we have $n=rd$.

Given $T\in \SYT(r \times d)$, define (following \cite{Schutzenberger-promotion}) the \emph{(jeu de taquin) promotion} of $T$, denoted $\promotion(T)$, as follows. Delete the $1$ from $T$, leaving an empty box. Move the smallest of the  numbers to the right or below the empty box into the empty box. Continue this process with the new empty box, until the empty box is at the lower right corner. Fill the empty box with $n+1$ and then subtract $1$ from all entries. The \emph{promotion path} consists of the numbers that move in this process, and is denoted by arrows in the example of \Cref{fig:prom_orbit}.

In \cite[Def.~6.1]{Gaetz.Pechenik.Pfannerer.Striker.Swanson:fluctuating}, we defined \emph{promotion functions} for \emph{fluctuating tableaux}, a class containing standard Young tableaux. When the shape of the tableau is rectangular, we showed these functions are permutations \cite[Thm.~6.7]{Gaetz.Pechenik.Pfannerer.Striker.Swanson:fluctuating}. In our setting, we do not need the definition in full generality, but give the following, which was \cite[Prop.~6.9]{Gaetz.Pechenik.Pfannerer.Striker.Swanson:fluctuating}.
\begin{definition}
        Let $T \in \SYT(r\times d)$  and $1\leq i\leq r-1$. Then the $i$th \emph{promotion permutation} is constructed as $\prom_i(T)(j) \equiv a+j-1 \pmod n$ if and only if $a$ is the unique value that moves from row $i+1$ to $i$ in the application of jeu de taquin promotion $\promotion$ to  $\promotion^{j-1}(T)$. We write $\prom_{\bullet}(T)$ for the tuple of these promotion permutations. 
\end{definition}

In \cite[Thm.~6.7]{Gaetz.Pechenik.Pfannerer.Striker.Swanson:fluctuating}, we proved the following properties of promotion permutations. Let $\sigma = (1\, 2\, \cdots\, n)$ denote the long cycle and  $w_0 = (1\, n)(2\, n-1)\cdots$ the longest element in the symmetric group $\mathfrak{S}_{n}$, and let $\evacuation(T)$ denote the \emph{evacuation} of $T$ \cite{Schutzenberger-promotion} (see e.g.~\cite[Def.~A1.2.8]{Stanley:EC2}). Then $\prom_i(T)^{-1} = \prom_{r-i}(T)$, $\prom_i(\promotion(T)) = \sigma^{-1} \prom_i(T) \sigma$, and $\prom_i(\evacuation(T)) = w_0 \prom_i(T) w_0$.

\begin{example}\label{ex:proms}
Let $T$ be the first tableau in \Cref{fig:prom_orbit}. We have
\begin{align*}
    \prom_1(T) &= 2\ 6\ 4\ 5\ 9\ 7\ 8\ 12\ 10\ 11\ 14\ 13\ 3\ 1 = \prom_6^{-1}(T), \\
    \prom_2(T) &= 4\ 7\ 5\ 9\ 12\ 8\ 10\ 14\ 11\ 13\ 3\ 1\ 6\ 2  = \prom_5^{-1}(T), \text{ and} \\
    \prom_3(T) &= 5\ 9\ 8\ 12\ 14\ 10\ 11\ 1\ 13\ 3\ 6\ 2\ 7\ 4 = \prom_4^{-1}(T).
\end{align*}
Let $T^\top$ be the transpose of $T$. The standard Catalan bijection sends $T^\top$ to the parenthesization $(()(())())()()$ (see e.g.~\cite[Ex.~6.19(ww), p.~263]{Stanley:EC2}). Matching parentheses yields a non-crossing perfect matching which can be interpreted as a fixed-point free involution, namely the promotion permutation of $T^\top$:
  \[ \prom_1(T^\top) = (1\ 10)(2\ 3)(4\ 7)(5\ 6)(8\ 9)(11\ 12)(13\ 14). \]
\end{example}

\begin{figure}[htbp]
\[T = \begin{tikzpicture}[baseline={([yshift=-.6ex]current bounding box.center)}]
\matrix (m) [matrix of math nodes,
             nodes={draw, minimum size=6.2mm, anchor=center},
             column sep=-\pgflinewidth,
             row sep=-\pgflinewidth
             ]
{
 1 &  3 \\
 2 &  6 \\
 4 &  7 \\
 5 &  9 \\
 8 & 10 \\
11 & 12 \\
13 & 14 \\
};
\def\x{4}
\def\r{7}
\pgfmathtruncatemacro{\y}{\x+1}
\pgfmathtruncatemacro{\rmm}{\r-1}
\ifthenelse{\x>0}{
\foreach \i [evaluate=\i as \j using int(\i+1)] in {1,...,\x} {
\draw[->,shorten >=0.15cm,shorten <=0.15cm,orange,thick] (m-\j-1.center) -- (m-\i-1.center);
};}{}

\draw[->,shorten >=0.15cm,shorten <=0.15cm,orange,thick] (m-\y-2.center) -- (m-\y-1.center);

\ifthenelse{\y<\r}{
\foreach \i [evaluate=\i as \j using int(\i+1)] in {\y,...,\rmm} {
\draw[->,shorten >=0.15cm,shorten <=0.15cm,orange,thick] (m-\j-2.center) -- (m-\i-2.center);
};}{}
\end{tikzpicture}
\quad \stackrel{\promotion}{\to} \quad
\begin{tikzpicture}[baseline={([yshift=-.6ex]current bounding box.center)}]
\matrix (m) [matrix of math nodes,
             nodes={draw, minimum size=6.2mm, anchor=center},
             column sep=-\pgflinewidth,
             row sep=-\pgflinewidth
             ]
{
 1 &  2\\
 3 &  5 \\
 4 &  6 \\
 7 &  8 \\
 9 & 11 \\
10 & 13 \\
12 & 14 \\
};
\def\x{0}
\def\r{7}
\pgfmathtruncatemacro{\y}{\x+1}
\pgfmathtruncatemacro{\rmm}{\r-1}
\ifthenelse{\x>0}{
\foreach \i [evaluate=\i as \j using int(\i+1)] in {1,...,\x} {
\draw[->,shorten >=0.15cm,shorten <=0.15cm,orange,thick] (m-\j-1.center) -- (m-\i-1.center);
};}{}

\draw[->,shorten >=0.15cm,shorten <=0.15cm,orange,thick] (m-\y-2.center) -- (m-\y-1.center);

\ifthenelse{\y<\r}{
\foreach \i [evaluate=\i as \j using int(\i+1)] in {\y,...,\rmm} {
\draw[->,shorten >=0.15cm,shorten <=0.15cm,orange,thick] (m-\j-2.center) -- (m-\i-2.center);
};}{}
\end{tikzpicture}
\quad \stackrel{\promotion}{\to} \quad
\begin{tikzpicture}[baseline={([yshift=-.6ex]current bounding box.center)}]
\matrix (m) [matrix of math nodes,
             nodes={draw, minimum size=6.2mm, anchor=center},
             column sep=-\pgflinewidth,
             row sep=-\pgflinewidth
             ]
{
 1 &  4\\
 2 &  5 \\
 3 &  7 \\
 6 & 10 \\
 8 & 12 \\
 9 & 13 \\
11 & 14 \\
};
\def\x{6}
\def\r{7}
\pgfmathtruncatemacro{\y}{\x+1}
\pgfmathtruncatemacro{\rmm}{\r-1}
\ifthenelse{\x>0}{
\foreach \i [evaluate=\i as \j using int(\i+1)] in {1,...,\x} {
\draw[->,shorten >=0.15cm,shorten <=0.15cm,orange,thick] (m-\j-1.center) -- (m-\i-1.center);
};}{}

\draw[->,shorten >=0.15cm,shorten <=0.15cm,orange,thick] (m-\y-2.center) -- (m-\y-1.center);

\ifthenelse{\y<\r}{
\foreach \i [evaluate=\i as \j using int(\i+1)] in {\y,...,\rmm} {
\draw[->,shorten >=0.15cm,shorten <=0.15cm,orange,thick] (m-\j-2.center) -- (m-\i-2.center);
};}{}
\end{tikzpicture}
\quad \stackrel{\promotion}{\to} \quad
\begin{tikzpicture}[baseline={([yshift=-.6ex]current bounding box.center)}]
\matrix (m) [matrix of math nodes,
             nodes={draw, minimum size=6.2mm, anchor=center},
             column sep=-\pgflinewidth,
             row sep=-\pgflinewidth
             ]
{
 1 &  3\\
 2 &  4 \\
 5 &  6 \\
 7 &  9 \\
 8 & 11 \\
10 & 12 \\
13 & 14 \\
};
\def\x{1}
\def\r{7}
\pgfmathtruncatemacro{\y}{\x+1}
\pgfmathtruncatemacro{\rmm}{\r-1}
\ifthenelse{\x>0}{
\foreach \i [evaluate=\i as \j using int(\i+1)] in {1,...,\x} {
\draw[->,shorten >=0.15cm,shorten <=0.15cm,orange,thick] (m-\j-1.center) -- (m-\i-1.center);
};}{}

\draw[->,shorten >=0.15cm,shorten <=0.15cm,orange,thick] (m-\y-2.center) -- (m-\y-1.center);

\ifthenelse{\y<\r}{
\foreach \i [evaluate=\i as \j using int(\i+1)] in {\y,...,\rmm} {
\draw[->,shorten >=0.15cm,shorten <=0.15cm,orange,thick] (m-\j-2.center) -- (m-\i-2.center);
};}{}
\end{tikzpicture}
\quad \stackrel{\promotion}{\to} \quad \cdots
\]
\caption{A tableau $T\in\SYT(7\times 2)$ and the first several tableaux in its promotion orbit. The promotion path of each tableau is indicated by arrows. Note that a two-column tableau's promotion path always consists of some initial up steps, one left step, and additional up steps. The up steps yield $\prom_i(T)(1)$, e.g.~here $\prom_1(T)(1)=2$, $\prom_2(T)(1)=4$, $\prom_3(T)(1)=5$, $\ldots$, $\prom_6(T)(1)=14$.}
\label{fig:prom_orbit}
\end{figure}

\subsection{Hourglass plabic graphs}
\label{sec:hourglass-background}
Following~\cite[\S3.1]{Gaetz.Pechenik.Pfannerer.Striker.Swanson:4row}, we define hourglass plabic graphs as an extension of Postnikov's plabic graphs \cite{Postnikov-arxiv}. We also define when such graphs are \emph{fully reduced}, generalizing the $r=4$ definition of \cite{Gaetz.Pechenik.Pfannerer.Striker.Swanson:4row}.

An \emph{hourglass graph} $G$ is an underlying planar embedded graph $\widehat{G}$, together with a positive integer multiplicity $m(e)$ for each edge $e$. The hourglass graph $G$ is drawn in the plane by replacing each edge $e$ of $\widehat{G}$  with $m(e)$ strands, twisted so that the clockwise orders of these strands around the two incident vertices are the same. See the lower right portion of \Cref{fig:FraserMap} for an example. For $m(e) \geq 2$, we call this twisted edge an \emph{$m(e)$-hourglass}, and call an edge with $m(e)=1$ a \emph{simple edge}.
The \emph{degree} $\deg(v)$ of a vertex $v \in G$ is the number of edges incident to $v$, counted with multiplicity, while its \emph{simple degree} $\widehat{\deg}(v)$ is its degree in the underlying graph $\widehat{G}$.

\begin{definition}
\label{def:hourglass-plabic-graph}
An \emph{$r$-hourglass plabic graph} is a bipartite hourglass graph $G$, with a fixed proper black--white vertex coloring, embedded in a disk, with all internal vertices of degree $r$, and all boundary vertices of simple degree one, labeled clockwise as $b_1,b_2,\ldots, b_n$. We consider $G$ up to planar isotopy fixing the boundary circle. 

We say that $G$ is of \emph{standard type} if all boundary vertices are colored black and of degree one, and in this case we say that $G$ has \emph{Pl\"ucker degree} $\frac{n}{r}$. (\Cref{lem:white_minus_black} guarantees that the Pl\"ucker degree is an integer.) In the remainder of the paper, all $r$-hourglass plabic graphs are assumed to be of standard type, unless otherwise noted. 
\end{definition}

\begin{definition}\label{def:trip_perms}
Let $G$ be an $r$-hourglass plabic graph with boundary vertices $b_1,\ldots, b_n$. For $1 \leq i \leq r-1$, the \emph{$i$-th trip permutation} $\trip_i(G)$ is the permutation of $[n]$ obtained as follows: for each $j$, begin at $b_{j}$ and walk along the edges of $G$, taking the $i$-th leftmost turn at each white vertex, and $i$-th rightmost turn at each black vertex, until arriving at a boundary vertex $b_{k}$. Then $\trip_i(G)(j)=k$. The walk taken is called a \emph{$\trip_i$-segment} and is denoted $\vec{\trip_i}(G)(j)$. Note that $\trip_i(G)^{-1}=\trip_{r-i}(G)$. 
We write $\trip_{\bullet}(G)$ for the tuple of these trip permutations. 

In addition to the above trip segments which begin and end at the boundary, we also refer to such walks which begin in the interior of the web without reaching the boundary as trip segments. A trip segment is \textit{trivial} if it loops inside a single hourglass edge. An $m$-hourglass has $\binom{m}{2}$ trivial trip segments. In particular, trivial trip segments do not occur on the boundary in standard type. When referring to trip segments below, we always mean non-trivial trip segments unless stated otherwise.
\end{definition}

\begin{definition}
\label{def:intersections}
A trip segment has a \textit{self-intersection} if it passes through a vertex more than once. In particular, all trip segments not reaching the boundary have a self-intersection.

Now suppose $G$ has no self-intersections. We define what it means for two different trip segments $\ell, \ell'$ of $G$ to intersect. First suppose the four endpoints of $\ell, \ell'$ are distinct. Draw the segments $\ell, \ell'$ on the underlying graph $\widehat{G}$. Contract any edges that $\ell, \ell'$ both pass through to a point. The result is a network of $\times$'s which we call \textit{intersections}. Each intersection has four distinct directions at which the two segments $\ell, \ell'$ enter and leave. An intersection is \textit{essential} if $\ell$ and $\ell'$ cross and \textit{inessential} if $\ell$ and $\ell'$ bounce off of each other. If instead segments $\ell$ and  $\ell'$ share a boundary vertex $b_k$, we consider them to have an essential intersection at the unique internal vertex incident to $b_k$. We say $\ell$ and $\ell'$ have an \textit{oriented double crossing} if they have an essential intersection followed in the forwards direction along both segments by another essential intersection.

An \emph{isolated component} of an hourglass graph $G$ is a connected component which does not contain a boundary vertex.
\end{definition}

The following definition is central to the paper.

\begin{definition}
\label{def:fully_reduced_2}
An hourglass plabic graph $G$ is \textit{fully reduced} if it has no isolated components, it has no self-intersections, no two $\trip_i$-segments have an oriented double crossing, and no pair of $\trip_i$- and $\trip_{i+1}$-segments have an oriented double crossing for $1 \leq i \leq r-2$.
\end{definition}

In light of \Cref{def:fully_reduced_2}, we refer to oriented double crossings between a pair of $\trip_i$-segments or between a $\trip_i$- and  a $\trip_{i+1}$-segment as \emph{bad double crossings}.

\begin{remark}
    In \cite{Gaetz.Pechenik.Pfannerer.Striker.Swanson:4row}, we called the condition of \Cref{def:fully_reduced_2} ``monotonic,'' but showed that for $r \leq 4$ it was equivalent to another definition of ``fully reduced.'' 
\end{remark}

\begin{remark}
    For examples of fully reduced hourglass plabic graphs with $\trip_i, \trip_{i+2}$-oriented double crossings, consider $\trip_1(G)(1)$ and $\trip_3(G)(8)$ for either graph $G$ in \Cref{fig:basic-square-move}.
\end{remark}

Given an hourglass plabic graph $G$, the underlying simple graph $\widehat{G}$ is a plabic graph in the sense of Postnikov \cite{Postnikov-arxiv}.
The following fact motivates the terminology of ``fully reduced'' and will be used later. 

\begin{proposition}
\label{prop:underlying-plabic-is-reduced}
Let $G$ be a fully reduced hourglass plabic graph. Then the underlying plabic graph $\widehat{G}$ is reduced.
\end{proposition}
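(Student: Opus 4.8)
The plan is to reduce Postnikov's notion of reducedness for $\widehat{G}$ to the $i=1$ instance of the fully reduced conditions of \Cref{def:fully_reduced_2}. First I would invoke the standard trip (zig-zag strand) characterization of reduced plabic graphs: $\widehat{G}$ is reduced precisely when it has no isolated components, no strand is a closed loop, no strand has a self-intersection, and no two strands have an oriented double crossing. It then suffices to verify these four properties for $\widehat{G}$, and the only nontrivial input will be a dictionary between the strands of $\widehat{G}$ and the $\trip_1$-segments of $G$.

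The key lemma I would establish is this correspondence. Let $\pi\colon G \to \widehat{G}$ be the contraction collapsing each $m$-hourglass to its underlying simple edge. I claim $\pi$ carries the non-trivial $\trip_1$-segments of $G$ bijectively onto the strands of $\widehat{G}$, preserving endpoints and the essential/inessential type of each intersection. The substance is a local analysis at each hourglass: the twist convention of \Cref{def:hourglass-plabic-graph} (equal clockwise strand orders at the two endpoints) forces a non-trivial $\trip_1$-segment to enter and exit each hourglass along its two extreme strands, since entering along any interior strand and taking the first leftmost (resp.\ rightmost) turn would produce a trivial segment looping inside that hourglass. Tracing such a segment, one checks it stays on extreme strands throughout and crosses each hourglass exactly as the rules of the road send a strand straight across the corresponding simple edge of $\widehat{G}$; conversely each $\widehat{G}$-strand lifts uniquely to such a segment. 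Because $\pi$ preserves the cyclic edge-order at every vertex, it matches crossings with crossings and bounces with bounces, consistently with the contraction already built into \Cref{def:intersections}.

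Granting the lemma, the transfer of conditions is routine. Isolated components of $\widehat{G}$ and of $G$ coincide, since $\pi$ alters neither connectivity nor incidence to the boundary, and $G$ has none by hypothesis. A closed-loop strand of $\widehat{G}$ would lift to a $\trip_1$-segment of $G$ that does not reach the boundary, which by \Cref{def:intersections} has a self-intersection, contradicting full reducedness. A self-intersecting strand of $\widehat{G}$ lifts to a self-intersecting $\trip_1$-segment of $G$, again forbidden. Finally, an oriented double crossing of two strands of $\widehat{G}$ lifts, by the type-preservation clause of the lemma, to an oriented double crossing of two $\trip_1$-segments of $G$, which is excluded by the $i=1$ case of \Cref{def:fully_reduced_2}.

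The hard part will be the key lemma, and within it two points in particular: first, the twist-based verification that non-trivial $\trip_1$-segments traverse hourglasses only through their extreme strands, so that they project to honest strands of $\widehat{G}$ and exhaust them bijectively; and second, that $\pi$ neither creates nor destroys essential intersections, which requires care since collapsing an hourglass could a priori merge or separate pieces of the crossing pattern. I would also confirm that degenerate internal vertices of small simple degree (for instance a full $r$-hourglass terminating at a leaf of $\widehat{G}$) are benign for Postnikov reducedness, so that the cited strand characterization applies without modification.
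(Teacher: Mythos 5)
Your overall route is the same as the paper's: identify the $\trip_1$-segments of $G$ with the strands of $\widehat{G}$ and then check Postnikov's trip-based criteria for reducedness of $\widehat{G}$. The paper simply asserts the dictionary ($\trip_1(G)=\trip(\widehat{G})$, with segments traversing the same sequences of vertices and edges), whereas you propose to prove it via an extreme-strand analysis at each hourglass; that analysis is correct (and consistent with the tunneling behavior later recorded in \Cref{lem:tunnel}), so this part of your proposal is a more detailed write-up of the same step, not a different argument.

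There is, however, one genuine gap: the characterization of reducedness you invoke is incomplete. Postnikov's criterion \cite[Thm.~13.2]{Postnikov-arxiv} has a fourth condition beyond no round trips, no (essential) self-intersections, and no bad double crossings: whenever $\trip(\widehat{G})(j)=j$, the boundary vertex $b_j$ must be attached to a leaf (a lollipop). Your four-condition list omits this, and your closing remark about ``degenerate internal vertices being benign'' points in the opposite direction --- it worries that leaves might invalidate the criterion, rather than recognizing that fixed points of $\trip$ \emph{force} leaves. This condition must actually be verified, and it follows from full reducedness exactly as in the paper's last sentence: if $\trip(\widehat{G})(j)=j$ and the unique internal vertex $v$ incident to $b_j$ were not a leaf, then the corresponding $\trip_1$-segment of $G$ would pass through $v$ twice, a self-intersection forbidden by \Cref{def:fully_reduced_2}; hence $v$ is a leaf, as required. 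With that observation added, your argument is complete and coincides with the paper's proof.
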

\begin{proof}
For plabic graph terminology used in this proof, we refer the reader to \cite{Postnikov-arxiv,Fomin.Williams.Zelevinsky}.

Note that $\trip_1(G)=\trip(\widehat{G})$ and moreover that all $\trip_1$-segments follow the same sequence of edges and vertices in $G$ and $\widehat{G}$. Since $G$ has no isolated components, $\widehat{G}$ has no isolated components.

We verify the conditions of \cite[Thm.~13.2]{Postnikov-arxiv}: $\widehat{G}$ has no round trips or trips with essential self-intersections because trips in $G$ do not self-intersect. Trips in $\widehat{G}$ have no bad double crossings because the same is true in $G$. Finally, if $\trip(\widehat{G})(j)=j$, then in order to avoid having a self-intersection at the unique internal vertex $v$ incident to $b_j$, it must be that $v$ is a leaf.
\end{proof}

Hourglass plabic graphs admit local transformations called \emph{moves}. The two kinds of moves relevant to this work are the \emph{contraction moves} (see \Cref{fig:contraction-hourglass-moves}) and the \emph{square moves} (see \Cref{fig:square-move}). Graphs differing by a sequence of such moves are \emph{move-equivalent}. An hourglass plabic graph is \emph{contracted} if contraction moves have been applied to convert all subgraphs from the left side of \Cref{fig:contraction-hourglass-moves} to the corresponding graphs from the right side. We will often implicitly follow a square move by the necessary contraction moves to render the resulting graph contracted.

\begin{figure}[htbp]
    \centering
    \includegraphics[scale=1.1]{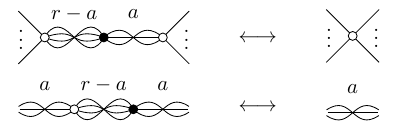}
    \caption{The contraction moves for hourglass plabic graphs. The parameter $a$ runs over $0,\ldots,r$. The color reversals of these moves are also allowed.}
    \label{fig:contraction-hourglass-moves}
\end{figure}

\begin{figure}[hbtp]
    \centering
    \includegraphics[]{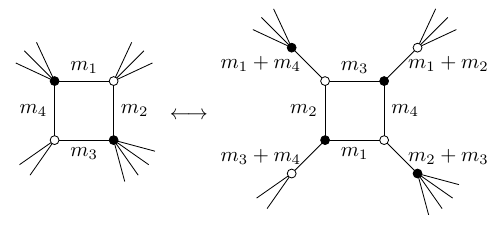}
    \caption{The square move for hourglass plabic graphs. Here $m_1+m_2+m_3+m_4$ is assumed to equal $r$. Multiplicities of the edges on the boundary of the picture are not shown, and are preserved by the move.}
    \label{fig:square-move}
\end{figure}

\subsection{Webs}

An $r$-hourglass plabic graph with boundary vertices $b_1,\ldots,b_n$ corresponds to a \emph{web of type $(\deg(b_1),\ldots,\deg(b_n))$} in which one retains the edge multiplicities but without drawing them as hourglasses. A web $W$ of type $(c_1,\ldots,c_n)$ determines a tensor invariant
\[
[W]_q \in \Inv_{U_q(\mathfrak{sl}_r)}\left(\bigwedge\nolimits_q^{c_1} V_q \otimes \cdots \otimes \bigwedge\nolimits_q^{c_n} V_q \right),
\]
a weighted sum over \emph{proper labelings} of $W$. For a precise definition, refer to \cite{Fraser-Lam-Le} in the $q=1$ case or \cite[\S2.1--2.2]{Gaetz.Pechenik.Pfannerer.Striker.Swanson:4row} in general. Those readers familiar with webs are cautioned that we use the $r$-valent conventions of \cite{Fraser-Lam-Le} as opposed to the trivalent conventions used, e.g., in \cite{Cautis-Kamnitzer-Morrison, Kuperberg, MOY}.

We move freely between the language of webs and of hourglass plabic graphs and, in particular, we refer to the tensor invariant of an hourglass plabic graph.

\subsection{Fraser's map}
\label{sec:Fraser_map}
We now describe the map of Fraser \cite[\S 2.1--2.3]{Fraser-2-column} from $2$-column standard Young tableaux to certain webs, which we view as hourglass plabic graphs. See \Cref{fig:FraserMap} for an example.
Define the map $\mathcal{F}$ on $\SYT(r\times 2)$ as the composition of maps $\mathcal{F}(T) \coloneqq \mathscr{W}\circ\mathfrak{t}\circ\dis\circ \mathscr{M}(T)$ where
\begin{align*}
\SYT(r\times 2) &\xrightarrow{\mathscr{M}} \{\text{noncrossing matchings on } 2r \text{ points}\}\\
&\xrightarrow{\dis}\displaystyle\bigcup_{s=2}^{r}\{\text{weighted dissections of an } s\text{-gon}\}\\
&\xrightarrow{\mathfrak{t}}\displaystyle\bigcup_{s=2}^{r}\{\text{weighted triangulations of an } s\text{-gon}\}\\
&\xrightarrow{\mathscr{W}} \{\text{webs of type $(1^{2r})$}\}\\
&\cong \{r\text{-hourglass plabic graphs of Pl\"ucker degree }2\}.
\end{align*}
Note that this map is not a bijection. The map $\mathfrak{t}$ involves choices, so this map actually takes a standard Young tableau to an equivalence class of weighted triangulations, and thus to an equivalence class of hourglass plabic graphs.

\medskip
\noindent{\textbf{\emph{Construction of the matching $\mathscr{M}$:}}}

First, draw the matching given by the standard Catalan bijection of the transposed tableau. That is, given $T\in\SYT(r\times 2)$, let $\mathscr{M}(T)$ be the unique noncrossing perfect matching in which each number in the first column is the smallest in its pair (see e.g. \cite[p.~90]{Stanley-catalan}). 

\medskip
\noindent{\textbf{\emph{Construction of the weighted dissection  $\dis$:}}}

Given a noncrossing matching $M$ of $2r$ points with $s$ short arcs (arcs between adjacent boundary vertices), let $i_1,i_2,\ldots,i_s$ be the sequence of numbers such that $i_j$ and $i_j+1$ mod $2r$ are joined by an arc. That is, $i_1,i_2,\ldots$ are the cyclically left endpoints of all the short arcs. The \emph{claw sets} are the cyclic intervals $C_j\coloneqq (i_j,i_{j+1}]$. (These were called \emph{color sets} in \cite{Fraser-2-column}.)
We use the claw sets to obtain a \emph{weighted dissection} $\dis(M)$ of the $s$-gon by merging the boundary vertices in each claw set and replacing multiple edges by a single edge with the corresponding weight, i.e.~the number of matchings between the two claw sets.
Note that the total weight of $\dis(M)$ is the number of arcs in $M$, which is $r$. Similarly,
the weight of $\dis(M)$ at boundary vertex $j$ in the $s$-gon equals the cardinality of the claw set $C_j$ of $M$.

\medskip
\noindent{\textbf{\emph{Construction of a weighted triangulation $\mathfrak{t}$:}}}

Given such a weighted dissection $\dis$, we then add in diagonals of weight zero to create a weighted triangulation $\mathfrak{t}(\mathfrak{d})$. (This step is not unique; any choice of triangulation is related to any other by a sequence of flip moves on weight zero diagonals.)

\medskip
\noindent{\textbf{\emph{Construction of the web} $\mathscr{W}$:}}

Given such a weighted triangulation $\mathfrak{t}$, construct a web $\mathscr{W}(\mathfrak{t})$ as follows. $\mathscr{W}(\mathfrak{t})$ has $2r$ black boundary vertices and $s$ internal white vertices, one for each vertex of the $s$-gon. The $j$th white vertex in $\mathscr{W}(\mathfrak{t})$ is adjacent to the entire claw set $C_j$ of boundary vertices by weight $1$ edges.  $\mathscr{W}(\mathfrak{t})$ also has an internal trivalent black vertex $b(\Delta)$ for each triangle $\Delta$ in $\mathfrak{t}$.  Let $\delta_1,\delta_2,\delta_3$ denote the vertices of $\Delta$ in $\mathfrak{t}$ and $w(\delta_1),w(\delta_2),w(\delta_3)$ denote the corresponding white vertices in $\mathscr{W}(\mathfrak{t})$; these are the three vertices adjacent to $b(\Delta)$  in $\mathscr{W}(\mathfrak{t})$. The weights of these three edges are given as follows. Let $e_i$ denote the edge of $\Delta$  opposite to $\delta_i$. The edge $e_i$ cuts the triangulation $\mathfrak{t}$ into two parts; call (*) the one that does not contain $\Delta$. Then the weight of the edge between $b(\Delta)$ and $w(\delta_i)$ is the sum of the weights of the edges in (*), including the weight of the edge $e_i$.

We append a final step to the construction of \cite{Fraser-2-column}. Given the web $\mathscr{W}$, replace each edge of weight $m>1$ with an hourglass of multiplicity $m$ in order to form an $r$-hourglass plabic graph $\mathcal{F}(T)$. See \Cref{fig:FraserMap,fig:triflip}.

We say that an hourglass plabic graph is \emph{Fraser} if it can be produced from this construction. Fraser showed that the result of this construction is a web, and that the web invariant does not depend on the choice of triangulation. We rephrase this in the proposition below. If $G$ is a Fraser graph arising from the tableau $T$, we write $\mathcal{T}(G) = T$.

\begin{figure}[htb]
\includegraphics[width=0.3\textwidth]{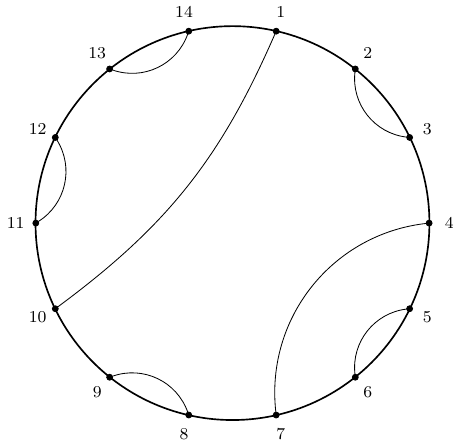}
\includegraphics[width=0.3\textwidth]{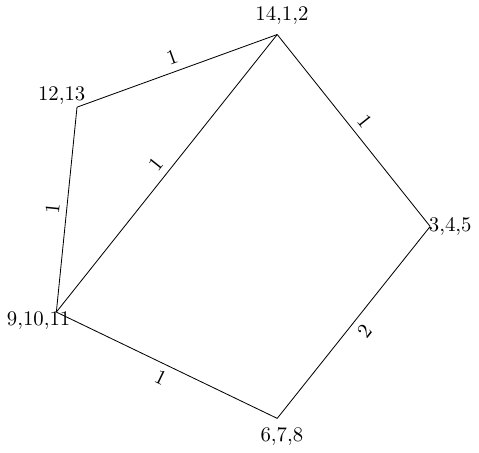}\includegraphics[width=0.3\textwidth]{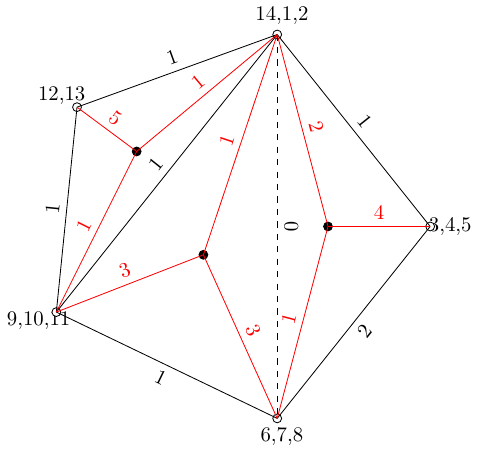}

\includegraphics[width=0.4\textwidth]{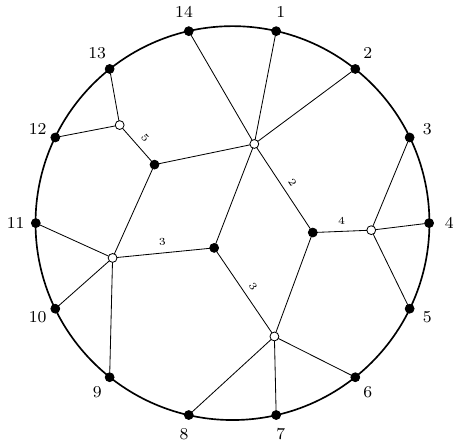} \includegraphics[width=0.4\textwidth]{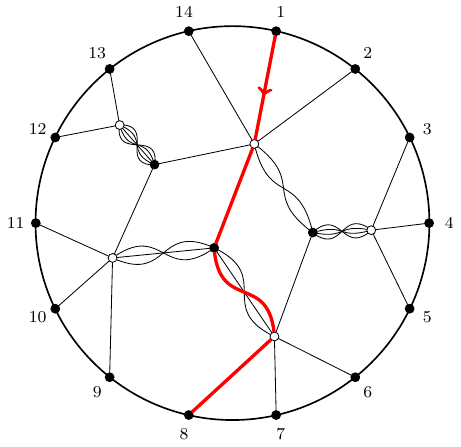}
\caption{An illustration of the construction discussed in \Cref{sec:Fraser_map}. Top left: The noncrossing matching corresponding to the transpose of the first SYT of \Cref{fig:prom_orbit}. Top middle and right: The corresponding weighted dissection $\mathfrak{d}$ and one choice of weighted triangulation~$\mathfrak{t}$, with the interior of the web from the next step drawn in red. Bottom left: A web produced from $\mathscr{W}$ via the construction of Fraser \cite{Fraser-2-column}. Bottom right: The hourglass plabic graph $G$ constructed by replacing edges  with hourglasses. The highlighted trip segment computes  $\trip_4(G)(1)=8$, in agreement with \Cref{ex:proms} which gave $\prom_4(T)(1)=8$.}
\label{fig:FraserMap}
\end{figure}
\begin{proposition}[cf.\ {\cite[Prop.~2.14]{Fraser-2-column}}]
\label{prop:Fraser_web_invariant}
For a fixed $T \in \SYT(r \times 2)$, the graph $\mathcal{F}(T)$ is determined uniquely, up to square moves. Moreover, the web invariant of $\mathcal{F}(T)$ does not depend on the choices involved.
\end{proposition}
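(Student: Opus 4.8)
The only freedom in the definition $\mathcal{F}(T) = \mathfrak{W}\circ\mathfrak{t}\circ\dis\circ\mathfrak{M}(T)$ lies in the step $\mathfrak{t}$, where weight-zero diagonals are inserted to complete the weighted dissection $\dis(\mathfrak{M}(T))$ to a weighted triangulation. The plan is therefore to show that any two completions produce hourglass plabic graphs related by square moves; the well-definedness of the web invariant then follows. First I would note that the positive-weight edges of $\dis(\mathfrak{M}(T))$ belong to every completion, so two triangulations extending the same dissection differ only in their inserted weight-zero diagonals, which subdivide the faces of the dissection. Within a single face---a subpolygon of the $s$-gon---any two triangulations are joined by a sequence of diagonal flips, by the classical connectivity of the flip graph (the $1$-skeleton of the associahedron); each such flip exchanges two weight-zero diagonals. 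It thus suffices to show that a single flip of a weight-zero diagonal induces a square move on the associated graph.

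For the local analysis, let the flipped weight-zero diagonal be $ac$, shared by triangles $abc$ and $acd$ of a quadrilateral $abcd$; the flip replaces these by $abd$ and $bcd$. Under $\mathfrak{W}$ the two triangles become trivalent black vertices $b(abc),b(acd)$; as $a$ and $c$ are the endpoints of the shared diagonal, the white vertices $w(a),w(c)$ are each joined to both, so that $b(abc),w(a),b(acd),w(c)$ bound a square face. Writing $\alpha,\beta,\gamma,\delta$ for the total weights lying beyond (and including) the quadrilateral sides $ab,bc,cd,da$, the weight rule defining $\mathfrak{W}$---the edge from $b(\Delta)$ to $w(\delta_i)$ carries the total weight of the triangulation lying beyond the side of $\Delta$ opposite $\delta_i$, including that side itself---assigns the four square edges the multiplicities $\beta,\gamma,\delta,\alpha$, with $\alpha+\beta+\gamma+\delta=r$ because $ac$ has weight zero and the three edges at each black vertex sum to $r$.

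I would then compute the graph obtained from the flipped triangulation. The new diagonal $bd$ is again weight zero, so $w(b),w(d)$ now bound a square face with black vertices $b(abd),b(bcd)$, and the same weight rule gives its four edges the multiplicities $\delta,\gamma,\beta,\alpha$. A direct check shows that the total multiplicity incident to each of $w(a),w(b),w(c),w(d)$ from this region is unchanged by the flip; equivalently, the external multiplicities at the four corners are preserved. This is exactly the transformation of \Cref{fig:square-move} with square-edge multiplicities $\alpha,\beta,\gamma,\delta$ summing to $r$, so the two graphs differ by a square move. Chaining these local moves along a flip sequence proves that $\mathcal{F}(T)$ is determined uniquely up to square moves. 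The remaining assertion, that the web invariant is independent of all choices, is \cite[Prop.~2.14]{Fraser-2-column}; alternatively it follows from the square-move statement together with the invariance of web invariants under square moves.

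The main obstacle is the local weight bookkeeping: one must verify that the multiplicities produced by $\mathfrak{W}$ before and after the flip agree with the two sides of the square move and, crucially, that the external multiplicities at the four quadrilateral vertices coincide---this is what lets the move be applied without disturbing the rest of the graph. Establishing that the replaced diagonal is again weight zero (so that the argument iterates) and matching the cyclic arrangement of the four square edges to \Cref{fig:square-move} are the only points requiring care; the reduction to a single flip and the invariance statement are, respectively, classical and a direct citation.
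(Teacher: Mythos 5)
Your proposal is correct, but note that the paper itself offers no proof of \Cref{prop:Fraser_web_invariant}: the proposition is stated as a rephrasing of Fraser's results, with the invariance of the web invariant cited from \cite[Prop.~2.14]{Fraser-2-column}, and the key fact you prove---that flips of weight-zero diagonals correspond to square moves---invoked later (in \Cref{sec:tamari}) from the proof of \cite[Prop.~1.13]{Fraser-2-column}. What you do differently is reconstruct that argument from scratch: first the reduction to a single flip via connectivity of the flip graph inside each face of the dissection (sound, since every intermediate triangulation still refines $\dis(\mathfrak{M}(T))$, so every diagonal being flipped has weight zero), and then the local bookkeeping, which checks out exactly: before the flip the square $b(abc),w(a),b(acd),w(c)$ has edge multiplicities $\beta,\gamma,\delta,\alpha$ and external multiplicities $\gamma+\delta$, $\alpha+\beta$ at $w(b),w(d)$; after the flip the square $b(abd),w(b),b(bcd),w(d)$ has multiplicities $\delta,\gamma,\beta,\alpha$ and external multiplicities $\beta+\gamma$, $\alpha+\delta$ at $w(a),w(c)$, so each white corner keeps its total multiplicity and the rest of the graph is untouched. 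One small point of precision: what the flip literally realizes is the \emph{bipartite} square move of \Cref{fig:contracted-sq-move}, i.e., the square move of \Cref{fig:square-move} composed with contraction moves; since the paper's convention is that square moves are implicitly followed by the contractions needed to keep graphs contracted, this is the intended reading of ``up to square moves.'' Your fallback for the invariance statement (square moves preserve web invariants) also works, though the cleanest route---and the one the paper takes---is the direct citation to \cite{Fraser-2-column}. The trade-off is the obvious one: your version makes the proposition self-contained within the paper, at the cost of redoing what is already established in the cited reference.
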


\begin{figure}[htbp]
\includegraphics[width=0.4\textwidth]{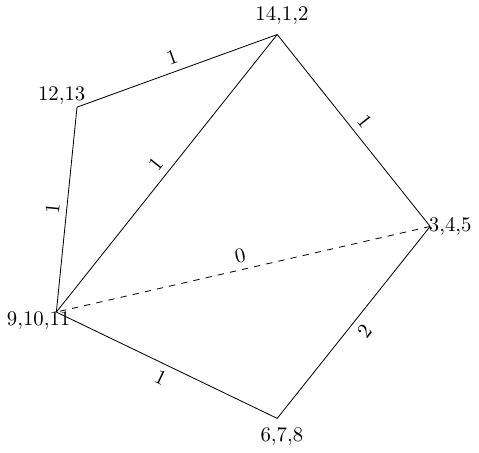}
\includegraphics[width=0.4\textwidth]{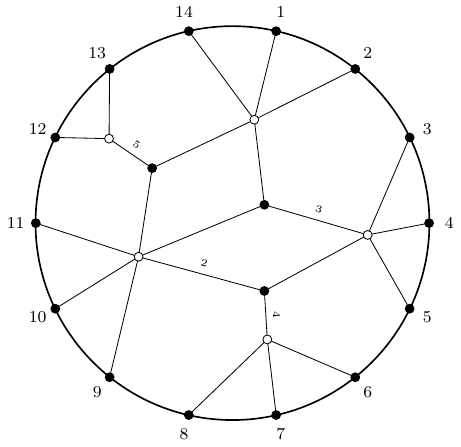} 
\caption{A graph related to the graph of \Cref{fig:FraserMap} by a  flip of a weight $0$ diagonal in the triangulation, and equivalently, a square move.}
\label{fig:triflip}
\end{figure}

The smallest case where such equivalence classes are absolutely necessary is in $\Inv_{\SL_4}(V^{\otimes 8})$, which is the basic $r=4$ square move living in Pl\"ucker degree $2$ (see \Cref{fig:basic-square-move}). Here the corresponding standard Young tableau has promotion order 2, while either of these graphs has rotation order 4. But the entire equivalence class has rotation order $2$. As we shall see, move equivalence classes have rich connections to statistical mechanics and lattices. In our experience, what may be initially perceived as a defect is in fact a feature of the framework.

\begin{figure}[htb]
    \ytableausetup{centertableaux}
    \centering
    {\includegraphics[width=0.7\textwidth]{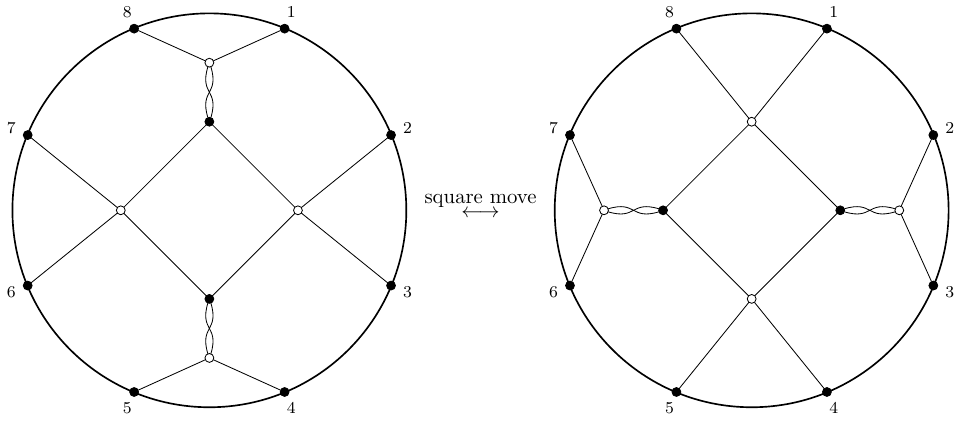}}
    \caption{The square move equivalence class $\mathcal{F}(T)$ for $T={\tiny \ytableaushort{12,34,56,78}}$.}
    \label{fig:basic-square-move}
\end{figure}

\section{The \texorpdfstring{$\trip_{\bullet}=\prom_{\bullet}$ property}{trip=prom}}\label{sec:trip-prom}
The main result of this section is the following.
\begin{theorem}\label{thm:trip=prom}
The map $\mathcal{F}$ from $\SYT(r \times 2)$ to Fraser graphs satisfies $\trip_\bullet(\mathcal{F}(T)) = \prom_\bullet(T)$.
\end{theorem}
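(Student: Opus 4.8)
The plan is to decouple the dependence on the boundary vertex $j$ from the dependence on the level $i$, reducing the whole statement to a single computation at the distinguished vertex $b_1$. The mechanism for varying $j$ is equivariance. On the tableau side, the promotion permutations satisfy $\prom_i(\promotion(S)) = \sigma^{-1}\prom_i(S)\sigma$, so by definition $\prom_i(T)(j) = \sigma^{j-1}(\prom_i(\promotion^{j-1}(T))(1))$. On the web side, trip permutations are equivariant under relabeling of the boundary: if $\rot_{-1}$ denotes the shift $b_k \mapsto b_{k-1}$, then $\trip_i(\rot_{-1}G) = \sigma^{-1}\trip_i(G)\sigma$. The bridge between the two is that Fraser's construction is covariant with respect to rotation. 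Indeed, the matching step satisfies $\mathfrak{M}(\promotion(S)) = \sigma^{-1}\mathfrak{M}(S)$ (the classical fact that promotion of a two-column rectangular tableau rotates its noncrossing matching), and the remaining steps $\dis$, $\mathfrak{t}$, $\mathfrak{W}$ are manifestly rotation-covariant; since rotation preserves the number of short arcs, we may choose the added weight-zero diagonals coherently along a promotion orbit to obtain $\mathcal{F}(\promotion^{j-1}(T)) = \rot_{-1}^{\,j-1}\mathcal{F}(T)$ literally, not merely up to moves. Combining these three facts yields $\trip_i(\mathcal{F}(T))(j) = \sigma^{j-1}(\trip_i(\mathcal{F}(\promotion^{j-1}(T)))(1))$, so it suffices to prove the identity at $j=1$, namely $\trip_i(\mathcal{F}(S))(1) = \prom_i(S)(1)$ for every $S \in \SYT(r\times 2)$ and every $i$. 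Because this base case will be established for every triangulation choice, no invariance of $\trip_\bullet$ under square moves is needed, avoiding any dependence on \Cref{sec:square_faces}.

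Next I would make the right-hand side explicit. In a two-column tableau the first promotion path is a single lattice path that descends column $1$, crosses to column $2$ in the row $k$ occupied by the matching partner of the entry $1$, and then descends column $2$. Reading off the entries that undergo a strictly vertical move shows $\prom_i(S)(1)$ equals the value $S(i+1,1)$ for $1 \le i < k$ and the value $S(i+1,2)$ for $k \le i \le r-1$; the horizontally-moving partner of $1$ is skipped. This list of $r-1$ boundary labels is the target for the trip walks.

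The main work, and the step I expect to be the principal obstacle, is this base case: tracing the $\trip_i$-segment from $b_1$ through the weighted triangulation and showing it exits at the predicted boundary vertex. The segment enters the white vertex $w_c$ attached to the claw set $C_c \ni 1$ and takes its $i$-th leftmost turn; for small $i$ this exits directly onto another boundary leg of $C_c$, while for larger $i$ it crosses an hourglass into an adjacent triangle, turns $i$-th rightmost at the black vertex, and continues. I would control the walk by an induction on the triangles crossed, maintaining a ``level'' invariant recording how far the segment has descended relative to the hourglass multiplicities it has traversed. The crucial point is that the edge weights produced by the map $\mathfrak{W}$ of \Cref{sec:Fraser_map} are exactly the cumulative arc-counts across the diagonals of the triangulation, and these cumulative counts are matched step-by-step by the successive vertical moves of the promotion path, with the crossing row $k$ corresponding to the diagonal incident to the partner of $1$. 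Verifying that the level invariant is transported correctly across each hourglass---that a $\trip_i$-segment entering an $m$-hourglass emerges with the correctly shifted level, determined by the local cyclic orders at the white and black vertices---is the technical heart of the argument.
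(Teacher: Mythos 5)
Your two outer reductions are correct, and in fact they coincide with steps the paper's own proof takes: the passage from general $j$ to $j=1$ by conjugating with the long cycle $\sigma$ (using $\prom_i(\promotion(S)) = \sigma^{-1}\prom_i(S)\sigma$ and the fact that promotion rotates the matching underlying Fraser's construction) is exactly how the paper's proof of \Cref{thm:trip=prom} concludes, and your description of the first promotion path is precisely \Cref{prop:prom1}. Your remark that rotation-covariance of $\mathcal{F}$ can be made literal by rotating the choice of weight-zero diagonals is fine, and it is true that no square-move invariance is needed (the paper's proof does not use it either).

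The genuine gap is the base case $\trip_i(\mathcal{F}(S))(1) = \prom_i(S)(1)$. This is not a peripheral verification: it is the entire substance of the theorem, and your proposal does not prove it. The ``level invariant'' is never defined, and the assertion that it ``is transported correctly across each hourglass'' is exactly what has to be established, for \emph{every} triangulation (which your own reduction requires). Moreover, the walk is more delicate than your sketch suggests: in a Fraser graph a trip segment alternates between white vertices (polygon corners) and black vertices (triangle centers), so successive triangles visited need only share a \emph{corner}, not a diagonal; the segment can pivot around a corner and skip over an entire fan of triangles, so bookkeeping based on ``cumulative arc-counts across the diagonals crossed'' does not directly track the walk. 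The paper's substitute for your monolithic trace is a recursion on the number of vertices via ear removal: every Fraser graph other than two disjoint claws has a proper ear (\Cref{lem:proper-ears}); removing it yields a smaller Fraser graph (\Cref{lem:ear_removal}); and the tunneling observation (\Cref{lem:tunnel}) pins down the only ways a trip segment can pass through the three ossicles of the ear (\Cref{lem:trips-ossicles}), so the trace from $b_1$ in $G$ can be compared with the one in the smaller graph $H$, which is known by induction, with a short counting argument ($p+q < i+j$) handling the segments that end inside the ear. Some mechanism of this kind---a recursive reduction exploiting the structure of triangulations rather than a direct global trace---is what your proposal is missing; as written, the heart of the argument is a plan, not a proof.
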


Our approach identifies ``ears'' in Fraser graphs which may be removed inductively. We show that the equality $\trip_\bullet = \prom_\bullet$ is appropriately preserved under ear removal.

\subsection{Promotion permutations for two column rectangles}
We regard noncrossing perfect matchings as permutations $[2r] \to [2r]$. For such a matching $M$, we say that $i$ is an \newword{opener} if $i<M(i)$ and $i$ is a \newword{closer} otherwise.

Let $T \in \SYT(r \times 2)$ and let $\mathscr{M}(T):[2r] \to [2r]$ be the corresponding noncrossing perfect matching. We call the arc $\{1, \mathscr{M}(T)(1)\}$ the \emph{barrier}. Let $o_1 < \cdots < o_k$ be the openers of $\mathscr{M}(T)$ strictly between $1$ and $\mathscr{M}(T)(1)$ and let $c_{k+1} < \cdots < c_{r-1}$ be the closers of $\mathscr{M}(T)$ strictly larger than $\mathscr{M}(T)(1)$. Note that every arc in $\mathscr{M}(T)$ except the barrier has exactly one element in $\{o_1, \ldots, o_k, c_{k+1}, \ldots, c_{r-1}\}$.

\begin{proposition}\label{prop:prom1}
With the above notation, we have
\[
\prom_{i}(T)(1) = \begin{cases}
o_i&\text{if $1 \leq i\le k$,}\\
c_i&\text{if $k+1 \leq i \leq r-1$.}
\end{cases}
\]
\end{proposition}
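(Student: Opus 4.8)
The plan is to reduce everything to a single application of promotion and then read off both sides from the combinatorics of the two-column shape. Since the promotion permutation is evaluated at $j=1$, the definition gives $\prom_i(T)(1) \equiv a_i \pmod{2r}$, where $a_i$ is the value that rises from row $i+1$ to row $i$ during the single promotion $\promotion(T)$. So I would first analyze the jeu de taquin path for an $r \times 2$ rectangle. Writing $T_{i,1} < T_{i,2}$ for the entries of row $i$, the empty box created by deleting $1 = T_{1,1}$ slides down column $1$ as long as the entry below it is smaller than the entry to its right, and once it enters column $2$ it can only continue downward. Hence the path descends column $1$ through rows $1,\dots,t$, crosses into column $2$ at a well-defined \emph{switch row} $t$ (the least $s$ with $T_{s,2} < T_{s+1,1}$, or $t = r$ if there is none), and then descends column $2$. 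Tracking which entry is displaced upward at each row transition, I would show $a_i = T_{i+1,1}$ for $1 \le i < t$ and $a_i = T_{i+1,2}$ for $t \le i \le r-1$, so that $\prom_i(T)(1)$ equals $T_{i+1,1}$ or $T_{i+1,2}$ accordingly.

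Next I would identify the barrier on the matching side. Encoding $T$ as a ballot path in which each column-$1$ entry is an up-step and each column-$2$ entry is a down-step, the noncrossing matching $\mathfrak{M}(T)$ (in which column-$1$ entries are openers) pairs $1$ with the position at which this path first returns to height $0$. A direct height computation shows this first return occurs exactly at $T_{t,2}$: the height can only return to $0$ at a down-step $T_{s,2}$, and it does so precisely when $T_{s+1,1} > T_{s,2}$; for $s < t$ the absence of an earlier switch forces $T_{s+1,1} < T_{s,2}$, keeping the height strictly positive, while the switch condition $T_{t,2} < T_{t+1,1}$ brings it to $0$ for the first time at $T_{t,2}$. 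Thus $\mathfrak{M}(T)(1) = T_{t,2}$.

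Finally I would match up the two lists. The openers strictly between $1$ and $T_{t,2}$ are precisely the column-$1$ entries $T_{2,1}, \dots, T_{t,1}$ (using $T_{i+1,1} < T_{i,2} < T_{t,2}$ for $i < t$ to place them inside the interval, and $T_{t,2} < T_{t+1,1}$ to rule out any further ones), so $k = t-1$ and $o_i = T_{i+1,1}$ for $1 \le i \le k$; likewise the closers strictly larger than $T_{t,2}$ are precisely $T_{t+1,2}, \dots, T_{r,2}$, giving $c_i = T_{i+1,2}$ for $k+1 \le i \le r-1$. Comparing with the values of $\prom_i(T)(1)$ computed in the first step yields the stated formula.

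The main obstacle, and the conceptual heart of the argument, is the identification $\mathfrak{M}(T)(1) = T_{t,2}$: this is the point where the \emph{dynamical} switch row of promotion must be reconciled with the purely \emph{static} first-return statistic of the noncrossing matching. Care is needed to verify that the ballot path stays strictly positive on the interval $(1, T_{t,2})$ exactly because no earlier switch occurs, and to handle the boundary case $t = r$ (where the barrier is $\{1, 2r\}$ and there are no closers above it) uniformly with the generic case.
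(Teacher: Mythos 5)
Your proposal is correct and takes essentially the same route as the paper: analyze the single jeu de taquin slide down the two-column shape (down column 1, across at a switch row, down column 2) and identify the column-crossing entry with $\mathfrak{M}(T)(1)$, then match the displaced entries with the openers and closers. The only difference is level of detail — the paper simply asserts $e = \mathfrak{M}(T)(1)$ as a consequence of the Catalan bijection, whereas you verify this identification explicitly via the ballot-path first-return argument.
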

\begin{proof}
    The openers of $\mathscr{M}(T)$ are in the first column of $T$, and the closers are in the second column of $T$. Applying promotion to $T$ results in the following sliding procedure (after deleting $1$):
    \begin{enumerate}
        \item The top $k$ entries from the left column are moved up;
        \item the entry $e$ is moved from the right column to the left column in row $k+1$; and
        \item the entries below $e$ in the right column are moved up.
    \end{enumerate}

    Note that $e=\mathscr{M}(T)(1)$, the entries moved in the first part of the process are precisely the openers $o_1, \dots, o_k$, and the entries in the last part are precisely the closers $c_{k+1}, \dots, c_{r-1}$.
\end{proof}

\subsection{Claws and ears}
In this subsection, we define notation and prove lemmas that will be used in the next subsection to prove \Cref{thm:trip=prom}.
\begin{definition}
    Let $G$ be an $r$-hourglass plabic graph.
 A \textit{claw} $C$ of $G$ is a consecutive collection of boundary vertices connected to the same white internal vertex by edges with multiplicity~$1$. The \textit{size} of a claw is the number of its boundary vertices. The common internal vertex is the \textit{center} of the claw.

        An \textit{ear} $(A, B, C)$ of $G$ is a particular configuration of claws $A, B, C$ as in \Cref{fig:claws-ears}. Here $A, B, C$ are in clockwise order with centers $a, b, c$, respectively. We require that $a, b, c$ be connected to a black internal vertex $v$ of simple degree $3$ and that $b$ has no edges other than those depicted. The \textit{type} of $(A, B, C)$ is $(p, q)$ where the multiplicity of the hourglass between $a$ and $v$ is $q$ and the multiplicity of the hourglass between $c$ and $v$ is $p$. We furthermore require the size of $A$ to be at least $p$ and the size of $C$ to be at least $q$.
   
        Given an ear $(A, B, C)$, the clockwise-last $p$ boundary edges of $A$ are the \textit{lobe of $A$}, the clockwise-first $q$ boundary edges of $C$ are the \textit{lobe of $C$}, the clockwise-first $p$ boundary edges of $B$ are the \textit{flap of $A$}, and the clockwise-last $q$ boundary edges of $B$ are the \textit{flap of $C$}. The three \textit{ossicles} of the ear are the hourglass edges connecting $a, b, c$ to $v$. We refer to these as the $A$-, $B$-, and $C$-ossicles.

         Call an ear of a Fraser graph \textit{proper} if neither vertex of the barrier $\{1, \mathscr{M}(T)(1)\}$ appears as a boundary vertex of the claw $B$. 
\end{definition}

Since all boundary vertices have degree $1$, we often conflate boundary vertices with their incident edges in the context of flaps, lobes, claws, etc.

\begin{figure}[htbp]
    \centering

    \raisebox{-0.5\height}{\includegraphics[]{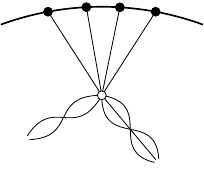}}
    \hspace*{1.4cm}
    \raisebox{-0.5\height}{\includegraphics[scale=0.8]{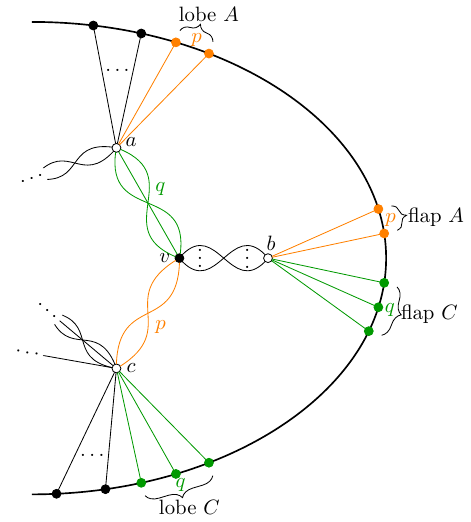}} 
    \caption{Left: A claw of size $4$. Right: An ear $(A, B, C)$ of type $(p, q)$ with its lobes and flaps.}
    \label{fig:claws-ears}
\end{figure}

In an ear $(A, B, C)$ of type $(p, q)$, note that the size of $B$ is $p+q$. If $G$ is Fraser, then by construction the lobe and flap of $A$ are $p$ matched pairs of $\mathscr{M}(T)$, and the lobe and flap of $C$ are $q$ matched pairs, which account for all $p+q$ boundary vertices of $B$. If additionally the ear $(A,B,C)$ is proper, then the lobes of $A$ and $C$ are necessarily also disjoint from the barrier.

\begin{definition}
    To \textit{remove an ear} $(A,B,C)$ of type $(p,q)$ in an hourglass plabic graph $G$, do the following; see \Cref{fig:ear-removal} for a visual illustration. First delete the claw $B$ as well as the vertex $v$ and the ossicles. Add $q$ clockwise-last boundary edges to $A$, resulting in a new claw $A'$, and add $p$ clockwise-first boundary edges to $C$, resulting in a new claw $C'$. The \textit{lobe of $A'$} is the lobe of $A$, the \textit{lobe of $C'$} is the lobe of $C$, the \textit{flap of $C'$} consists of the $q$ added boundary edges of $A'$, and the \textit{flap of $A'$} consists of the $p$ added boundary edges of $C'$.
\end{definition}

\begin{figure}[htp]
\[
\includegraphics[width=0.4\textwidth]{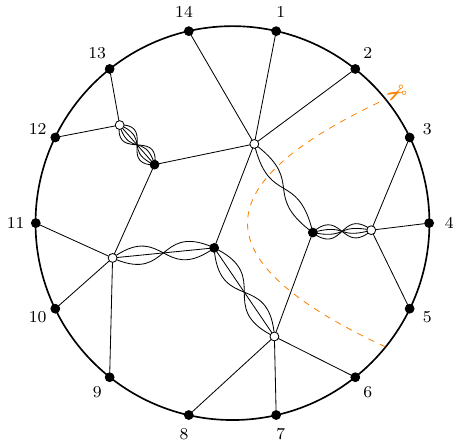} 
\includegraphics[width=0.4\textwidth]{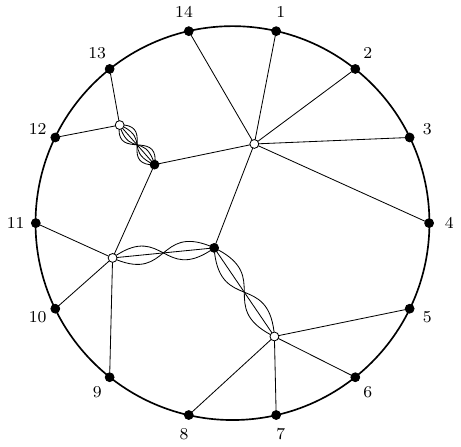}
\]
    \centering
    \includegraphics[width=.8\textwidth]{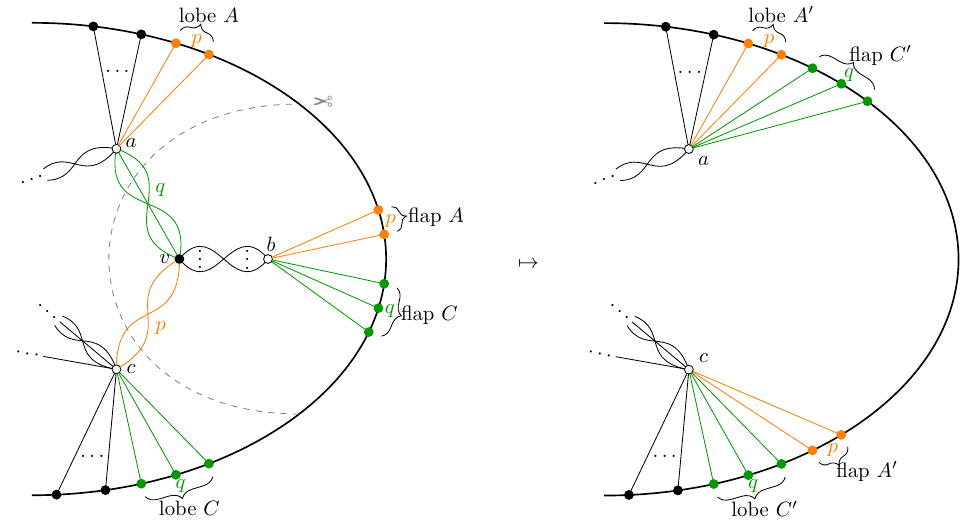}
    \caption{Removal of an ear $(A, B, C)$ of type $(p, q)$ resulting in new claws $(A', C')$.}
    \label{fig:ear-removal}
\end{figure}

Suppose $G$ is Fraser, arising from the triangulation $\mathfrak{t}$. Ears of $G$ correspond precisely to triangles in $\mathfrak{t}$ with at least two edges on the boundary of the polygon. Removing an ear corresponds precisely to collapsing such a triangle as in \Cref{fig:collapse-triangle}. 
Hence we have the following.

\begin{lemma}\label{lem:ear_removal}
    Removing an ear from a Fraser graph results in a Fraser graph with fewer vertices. \qed
\end{lemma}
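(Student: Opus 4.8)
The plan is to work entirely on the triangulation side of Fraser's construction, translating ``ear'' into a local feature of a weighted triangulation and then checking that its removal is realized by $\mathfrak{W}$ applied to a smaller triangulation.

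First I would fix data exhibiting $G$ as Fraser: a weighted triangulation $\mathfrak{t}$ of an $s$-gon $P$ with $G=\mathfrak{W}(\mathfrak{t})$, obtained from $M=\mathfrak{M}(T)$ by taking $\dis(M)$ and inserting weight-$0$ diagonals. Under $\mathfrak{W}$ the white centers of claws correspond to the vertices of $P$ (the size of a claw being the sum of the weights of the edges of $\mathfrak{t}$ incident to that vertex), and the internal black trivalent vertices correspond to the triangles of $\mathfrak{t}$, with ossicle weights given by Fraser's opposite-edge rule. The first real step is to recognize that an ear $(A,B,C)$ of $G$ is exactly the image of a triangle $\Delta$ of $\mathfrak{t}$ having two sides on the boundary of $P$: the requirement that the center $b$ of $B$ carry no internal edge other than its ossicle says precisely that the vertex $\delta_b$ of $P$ giving $B$ lies on the single triangle $\Delta$, i.e.\ both sides of $P$ at $\delta_b$ bound $\Delta$. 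Reading off Fraser's rule on those two boundary sides identifies the type $(p,q)$ with their weights and gives $|B|=p+q$.

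Next I would describe the collapse. Removing the ear deletes $\delta_b$ together with the two boundary sides at it and promotes the third side $\overline{\delta_a\delta_c}$ of $\Delta$ to a boundary side of the resulting $(s-1)$-gon $P'$, increasing its weight by $p+q$, while keeping all other edges and weights of $\mathfrak{t}$; this yields a weighted triangulation $\mathfrak{t}'$. The heart of the argument is to verify that this reweighting is exactly the one forced by Fraser's opposite-edge rule: for any triangle $\Delta'\neq\Delta$, the total weight lying beyond each of its sides is unchanged, since on the side facing $\Delta$ the removed contribution $p+q+w(\overline{\delta_a\delta_c})$ is replaced by the single promoted boundary edge of equal weight. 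Hence every ossicle weight of $\mathfrak{t}'$ agrees with that of $\mathfrak{t}$. A short check then confirms that the total weight stays $r$, that the claws $A,C$ grow to $A',C'$ by absorbing the $q$ and $p$ boundary points formerly in $B$, and therefore that $\mathfrak{W}(\mathfrak{t}')$ is literally $G$ with the ear removed.

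Finally, I would argue that $\mathfrak{t}'$ is again a Fraser triangulation: its positive-weight edges form a noncrossing weighted dissection of $P'$ with total weight $r$ and positive vertex weights, from which one recovers a noncrossing perfect matching $M'$ on $2r$ points (expanding each weight-$m$ chord into $m$ nested arcs), necessarily of the form $\mathfrak{M}(T')$ for a unique $T'\in\SYT(r\times 2)$. Thus $\mathfrak{t}'=\mathfrak{t}(\dis(\mathfrak{M}(T')))$ for some choice of weight-$0$ diagonals, and the ear-removed graph $\mathfrak{W}(\mathfrak{t}')=\mathcal{F}(T')$ up to the triangulation choices governed by \Cref{prop:Fraser_web_invariant}; in particular it is Fraser. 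Since the collapse deletes the internal vertices $b$ and $v$, the result has strictly fewer vertices. The main obstacle I anticipate is the bookkeeping in the collapse step: pinning down that promoting $\overline{\delta_a\delta_c}$ by weight $p+q$ is precisely the assignment leaving all remaining ossicle weights intact and reproducing ear removal edge-for-edge. Once that invariance is established, the reduction to Fraser's bijection is routine.
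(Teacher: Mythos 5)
Your proposal is correct and follows essentially the same route as the paper: the paper's (very terse) proof is exactly the observation that ears of a Fraser graph correspond to triangles of the weighted triangulation with two sides on the polygon boundary, and that ear removal corresponds to collapsing such a triangle (\Cref{fig:collapse-triangle}), which yields a smaller Fraser triangulation. Your weight bookkeeping --- promoting the third side by $p+q$, checking all other ossicle weights via Fraser's opposite-edge rule are unchanged, and recovering a matching/tableau for the collapsed triangulation --- is precisely the verification the paper delegates to its figure.
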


\begin{figure}[htbp]
    \centering
    \includegraphics[width=.8\textwidth]{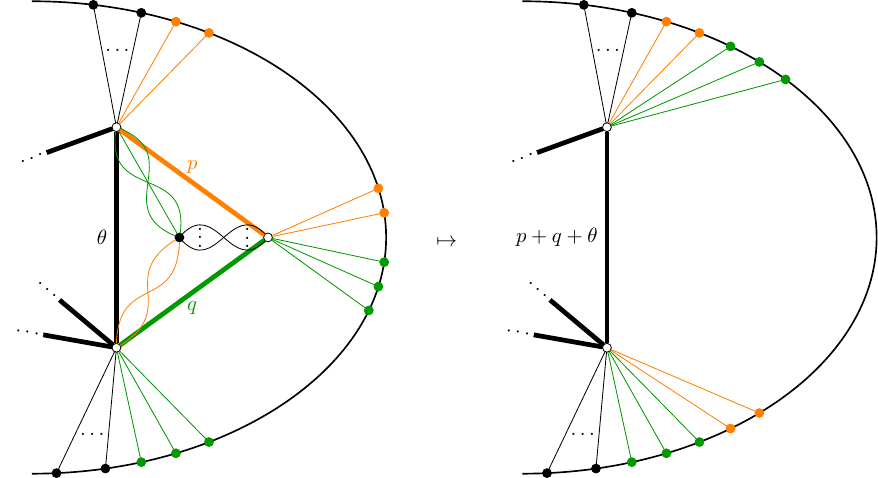}
    \caption{Ear removal of a Fraser graph corresponds to collapsing a triangle of the triangulation.}
    \label{fig:collapse-triangle}
\end{figure}

\begin{lemma}\label{lem:proper-ears}
    Every Fraser graph $G$ has a proper ear, except for the disjoint union of two claws.
\end{lemma}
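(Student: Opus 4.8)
The plan is to pass to the triangulation picture. By \Cref{lem:ear_removal} and the discussion preceding it, ears of $G$ correspond to triangles of a triangulation $\mathfrak{t}$ underlying $G$ that have at least two sides on the boundary of the $s$-gon, the tip claw $B$ being the polygon vertex lying between those two boundary sides (cf. \Cref{fig:collapse-triangle}). One checks that no arc of $\mathfrak{M}(T)$ has both endpoints in a single claw set: a short arc joins adjacent claws, and a longer arc with both endpoints in $(i_j,i_{j+1}]$ would force, by noncrossingness, a short-arc endpoint strictly between $i_j$ and $i_{j+1}$, a contradiction. Hence the barrier $\{1,\mathfrak{M}(T)(1)\}$ joins two \emph{distinct} polygon vertices $P_1$ (the claw containing $1$) and $P_2$ (the claw containing $\mathfrak{M}(T)(1)$), and an ear is proper exactly when its tip is neither $P_1$ nor $P_2$. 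The excluded graph is precisely the case $s=2$, where the degenerate $2$-gon has no triangle and hence no ear. So I would assume $s \geq 3$ and invoke the standard fact that the dual tree of $\mathfrak{t}$ has $s-2$ nodes whose leaves are exactly the ears; thus there is at least one ear, and at least two ears with distinct tips once $s \geq 4$ (distinct ears have distinct tips, since a vertex is a tip only when its two polygon sides lie in a common triangle, which determines the triangle uniquely).

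The key structural input is that the barrier is an edge of the dissection, and therefore of \emph{every} triangulation $\mathfrak{t}$. I would split into two cases according to whether this edge is a side or a diagonal of the $s$-gon. If the barrier is a diagonal, then the two polygon sides incident to $P_1$ lie in the two different sub-polygons obtained by cutting the $s$-gon along the barrier, so no single triangle can contain both; hence $P_1$ is never an ear tip, and likewise $P_2$. Every ear is then automatically proper, and at least one exists. If the barrier is a side, then for $s=3$ the unique triangle may be presented as an ear whose tip is the third, unmarked, vertex; while for $s \geq 4$ a boundary side lies in a unique triangle, so $P_1$ and $P_2$ cannot both be ear tips (that would force two distinct triangles to share the barrier side). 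Since there are at least two ears with distinct tips and at most one marked tip, at least one ear is proper.

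In each case I obtain an ear whose tip claw $B$ contains neither endpoint of the barrier, i.e.\ a proper ear, which translates back to $G$ through the triangle-collapse correspondence. The size constraints $\lvert A\rvert \geq p$ and $\lvert C\rvert \geq q$ from the definition of an ear are automatic for Fraser graphs, as recorded after the definition of ear removal, so no extra work is needed there. I expect the main obstacle to be the bookkeeping in the side-versus-diagonal dichotomy---specifically, verifying that $P_1$ and $P_2$ fail to be ear tips in the diagonal case and cannot simultaneously be ear tips in the side case---together with the mild care required to present the single triangle as an ear with a chosen unmarked tip when $s=3$.
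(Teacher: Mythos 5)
Your proposal is correct in substance and shares the paper's skeleton---pass to the triangulation, dispose of $s=2$ and $s=3$ directly, and for $s \geq 4$ produce two ear triangles (two leaves of the dual tree)---but the key step, namely why one of the two ears must be proper, is argued by a genuinely different mechanism. The paper stays inside the matching: if an ear $(A,B,C)$ is improper, then a barrier endpoint lies in $B$; since every boundary vertex of $B$ is matched into the lobes of $A$ and $C$, the entire barrier is trapped inside $(A,B,C)$, hence misses the tip claw $E$ of the second ear, which is therefore proper. You instead prove two structural facts the paper never needs: first, that no arc of $\mathfrak{M}(T)$ has both endpoints in a single claw set, so the barrier joins distinct polygon vertices $P_1 \neq P_2$; second, that the barrier consequently descends to an edge of the dissection, hence of every triangulation $\mathfrak{t}$. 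Your side/diagonal dichotomy then yields something slightly stronger than the paper in the diagonal case (every ear is proper, not merely one), at the cost of more polygon bookkeeping. Both routes are valid: the paper's argument is shorter and purely local to the two ears, while yours isolates a reusable fact about where the barrier sits in $\mathfrak{t}$.

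One point to tighten in your side case with $s \geq 4$: you rule out $P_1$ and $P_2$ both being ear tips by saying this ``would force two distinct triangles to share the barrier side,'' but a priori the two ear triangles could coincide. Your earlier parenthetical establishes that each vertex is the tip of at most one triangle (so distinct triangles have distinct tips), not that distinct tips yield distinct triangles. The missing sub-case is easy to close: if a single triangle had both $P_1$ and $P_2$ as tips, it would contain the three polygon sides $xP_1$, $P_1P_2$, $P_2y$, and for these to close up into a triangle one needs $x=y$, i.e., $s=3$, contradicting $s \geq 4$. With that line added, your argument is complete.
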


\begin{proof}
    Let $\mathfrak{t}$ be the triangulation corresponding to $G$. Either $\mathfrak{t}$ is the vacuous triangulation of a $2$-gon, $\mathfrak{t}$ is a triangle, or $\mathfrak{t}$ has at least two triangles with two edges on the boundary, which hence correspond to ears. In the first case, $G$ is the disjoint union of two claws. In the second case, we may cycle the three claws $(A, B, C)$ so that $1$ and $\mathscr{M}(T)(1)$ are not in $B$. In the third case, we in fact have two ears $(A, B, C)$ and $(D, E, F)$ where $E \notin \{A, B, C\}$. If $(A, B, C)$ is not proper, then $B$ intersects the barrier, and since every boundary vertex of $B$ is matched with lobes in $A$ or $C$, the barrier is entirely within $(A, B, C)$. Hence the barrier cannot intersect $E$, so $(D, E, F)$ is proper.
\end{proof}

\subsection{Proof of \texorpdfstring{\Cref{thm:trip=prom}}{Theorem 3.1}}

We first require the following simple observation whose proof is immediate.

\begin{claim}[Tunneling through an hourglass]\label{lem:tunnel}
    Suppose a $\trip_i$-segment enters an hourglass edge from the counterclockwise-$j$th strand after the hourglass. Then it exits the hourglass edge on the counterclockwise-$j$th strand after the hourglass; see \Cref{fig:tunnel}. \qed 
\end{claim}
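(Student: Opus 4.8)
The plan is to reduce the statement to two local turn-counts, one at each endpoint of the hourglass, glued together by the twist condition. Write $e$ for the hourglass edge, let $u,w$ be its endpoints, and consider a $\trip_i$-segment traversing $e$ from $u$ to $w$. Since $G$ is bipartite, $u$ and $w$ have opposite colors; I will treat the case $u$ white and $w$ black, the other case following by symmetry under reversing all colors (which interchanges the leftmost and rightmost rules). Record the clockwise cyclic order of edge-ends around $u$ and around $w$. By the defining twist property of an hourglass (\S\ref{sec:hourglass-background}), the $m$ strands of $e$ occur in the \emph{same} clockwise order around $u$ and around $w$; in particular the strand the segment uses keeps a well-defined clockwise index $\ell \in \{1,\dots,m\}$ as it passes from $u$ to $w$. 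The quantities to be tracked are the edge-end $f$ on which the segment arrives at $u$ and the edge-end $g$ on which it departs $w$, each recorded as the counterclockwise-$j$th strand after the hourglass block at its respective vertex.

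First I would carry out the turn-count at $u$. Arriving on $f$ at the white vertex $u$ and taking the $i$-th leftmost turn onto the $\ell$-th strand of $e$ expresses $\ell$ as an affine function of $j$ and $i$: the $i$-th turn sweeps past the edge-ends lying clockwise-after the hourglass and then across $\ell-1$ strands, so a single count over the $r$ edge-ends at $u$ yields an affine relation between $\ell$, $i$, and $j$ (for one standard normalization, $\ell = i - j + 1$). I would then perform the mirror-image count at $w$: arriving on the $\ell$-th strand and taking the $i$-th rightmost turn onto $g$, the same bookkeeping — now with the opposite handedness, since $w$ is black — expresses the counterclockwise-after index $j'$ of $g$ as the corresponding affine function of $\ell$ and $i$.

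The two counts are then combined. The twist condition feeds the common value $\ell$ from the first count into the second, and because the turn at $u$ and the turn at $w$ use the same ordinal $i$ but opposite handedness (leftmost versus rightmost, forced by the bipartite coloring), the $\ell$- and $i$-dependence cancels and one is left with $j' = j$. This is exactly the assertion that the segment exits on the counterclockwise-$j$th strand after the hourglass, in agreement with \Cref{fig:tunnel}.

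The hard part will be purely orientational bookkeeping: one must fix the handedness conventions consistently and verify that the clockwise order-preservation supplied by the twist, composed with the left/right reversal forced by the coloring, yields precisely $j' = j$ rather than an off-by-one shift or a reflection $j \mapsto c - j$. This is a finite local check, and the apparent need to combine two turn rules is exactly why the statement is not a mere restatement of the twist condition (which alone only preserves the strand index $\ell$, not the adjacent edge-end index $j$). The degenerate cases in which the segment enters or leaves $e$ along a strand of an adjacent hourglass rather than a simple edge are handled by the identical count, and the case $m=r$, in which a vertex has no non-hourglass edge-ends, does not arise for a segment tunneling through $e$.
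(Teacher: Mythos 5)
Your proposal is correct, and it is essentially the paper's argument: the paper states \Cref{lem:tunnel} with no written proof (``whose proof is immediate''), and the intended justification is exactly your local turn-count, where the white-vertex rule gives $\ell = i-j+1$, the twist condition transports the strand index $\ell$ unchanged, and the black-vertex rule inverts the same affine relation to return $j' = j$. The bookkeeping does close up as you predict (including in the color-reversed case, where the turn wraps past the non-hourglass edge-ends but the count still cancels), so there is nothing missing.
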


\begin{figure}[htp]
    \centering
    \includegraphics[]{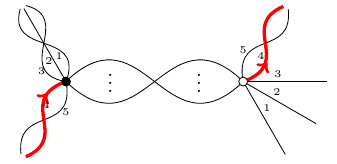}
    \caption{Tunneling through an hourglass. Here we assume the highlighted trip segment passes through the middle hourglass.}
    \label{fig:tunnel}
\end{figure}

    We now prove \Cref{thm:trip=prom} by induction on the number of vertices of $G$. In the base case, $G$ is the disjoint union of two claws, and the result is straightforward to verify. Otherwise, by \Cref{lem:proper-ears}, $G$ has a proper ear $(A, B, C)$. Let $H$ be the result of removing this ear and let $(A', C')$ be the resulting claws. By \Cref{lem:ear_removal}, $H$ is Fraser with fewer vertices than $G$, so by induction $\trip_\bullet(H) = \prom_\bullet(\mathcal{T}(H))$.

We now use this to show $G$  satisfies $\trip_\bullet(G) = \prom_\bullet(\mathcal{T}(G))$.
    We begin by showing $\trip_i(G)(1) = \prom_i(\mathcal{T}(G))(1)$ when $A, B, C$ are clockwise-between $1$ and the other endpoint of the barrier of $G$. In this case, the lobe of $A$ consists of openers connected to the flap of $A$, and the flap of $C$ consists of openers connected to the lobe of $C$. Furthermore, the lobe of $A'$ and the flap of $C'$ consist of openers which are matched with the flap of $A'$ and the lobe of $C'$. See \Cref{fig:ear-removal}.

    Consider $\trip_i(G)(1)$ and $\prom_i(\mathcal{T}(G))(1)$. By \Cref{prop:prom1}, $\prom_\bullet(\mathcal{T}(G))(1)$ are the $k$ openers to the right of the barrier, including the openers in $(A, B, C)$, along with the $r-1-k$ closers to the left of the barrier. The same holds for $\prom_\bullet(\mathcal{T}(H))(1)$. By inductive hypothesis, $\trip_\bullet(H)(1) = \prom_\bullet(\mathcal{T}(H))(1)$. 
    
First consider an opener or closer $u$ of $G$ not in the claw $B$. Note that $u$ is also an opener or closer of $H$ not in the flap of $C'$ or the flap of $A'$. In particular, by \Cref{prop:prom1},
        \[
        \trip_i(H)(1) = \prom_i(\mathcal{T}(H))(1) = u = \prom_i(\mathcal{T}(G))(1)
        \]
        for some $i$. Since the segment $\vec{\trip}_i(H)(1)$ does not end in the flap of $C'$ or the flap of $A'$, $\vec{\trip}_i(G)(1)$ does not enter the $A$- or $C$-ossicle, so that $\trip_i(G)(1) = \trip_i(H)(1)$. Thus, for such $i$, $\trip_i(G)(1) = \prom_i(\mathcal{T}(G))(1)$.
        
        Now consider an opener or closer $u$ of $G$ in the claw $B$, namely $u$ is an opener in the flap of $C$. We again have $u = \prom_i(\mathcal{T}(G))(1)$ for some $i$. Say $u$ is the counterclockwise-$j$th edge in the flap of $C$. Correspondingly, if $u'$ is the counterclockwise-$j$th edge in the flap of $C'$, then we see $\trip_i(H)(1) = \prom_i(\mathcal{T}(H))(1) = u'$. The segment $\vec{\trip}_i(H)(1)$ enters the flap of $C'$ and not the flap of $A'$, so the segment $\vec{\trip}_i(G)(1)$ enters the $A$-ossicle on the counterclockwise-$j$th strand. It then enters $v$ on the counterclockwise-$j$th strand. If it enters the $B$-ossicle, then by \Cref{lem:tunnel} it exits on the counterclockwise-$j$th strand of the flap of $C$, in which case $\trip_i(G)(1) = u$. Since it takes the $i$th right at $v$, we must show $p+q < i+j$. Note that $i$ is the number of openers between $1$ and $u$, which includes the $p$ vertices in the lobe of $A$ and the $q-j+1$ clockwise-first vertices in the flap of $C$, so indeed $p+q < i+j$.

    If instead $A, B, C$ are to the left of the barrier of $G$, we may reduce to the above case by reflecting through a diameter passing between $1$ and $n$, since the matching is noncrossing. This operation conjugates $\trip_\bullet(G)$ by the long element $w_0$. It reflects the underlying triangulation $\mathfrak{t}$ of $G$, which corresponds to evacuation on $\mathcal{T}(G)$. Evacuation conjugates $\prom_\bullet(\mathcal{T}(G))$ by the long element $w_0$ \cite[Thm.\ 6.7]{Gaetz.Pechenik.Pfannerer.Striker.Swanson:fluctuating}. Thus $\prom_\bullet(\mathcal{T}(G))(1) = \trip_\bullet(G)(1)$ unconditionally. Similarly, by \cite[Thm.\ 6.7]{Gaetz.Pechenik.Pfannerer.Striker.Swanson:fluctuating}, rotation corresponds to conjugation by the long cycle $\sigma$, so $\prom_\bullet(\mathcal{T}(G)) = \trip_\bullet(G)$. \qed

\section{Fraser graphs are fully reduced}
\label{sec:fraser_is_fr}

We now extend the arguments in \Cref{sec:trip-prom} to show that Fraser graphs are fully reduced (and contracted) in the sense of \Cref{def:fully_reduced_2}. Throughout this section, $G$ is a Fraser graph with ear $(A, B, C)$, $H$ is the Fraser graph obtained by removing this ear, and $(A', C')$ are the resulting claws in $H$. We begin with some observations concerning trip segments. 

\begin{lemma}\label{lem:trips-ossicles}
     Let $\ell = \vec{\trip}_i(G)(j)$ be some trip segment which passes through an ossicle. Then $\ell$ or its reversal has one of the following forms:
    \begin{enumerate}[(i)]
        \item begin in the lobe of $A$, pass through the $A$-ossicle, pass through the $C$-ossicle, and terminate in the lobe of $C$; or
        \item begin in the lobe of $C$, pass through the $C$-ossicle, pass through the $A$-ossicle, and terminate in the lobe of $A$; or
        \item begin in the flap of $C$, pass through the $B$-ossicle, pass through the $A$-ossicle, and continue into $G$, not into the lobe of $A$; or
        \item begin in the flap of $A$, pass through the $B$-ossicle, pass through the $C$-ossicle, and continue into $G$, not into the lobe of $C$.
    \end{enumerate}
\end{lemma}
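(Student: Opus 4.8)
The plan is to reduce the statement to a purely local analysis at the trivalent black vertex $v$ and the three white centers $a,b,c$. First I would observe that every trip segment passing through an ossicle must pass through $v$: the ossicles are exactly the edges incident to $v$, and $v$ is internal, so a segment traversing a strand of an ossicle reaches $v$, enters it along a strand of one ossicle, and --- taking the $i$-th rightmost turn at the black vertex $v$ --- leaves it along a strand of another ossicle. A transition back to the same ossicle loops within a single hourglass and is a trivial trip segment, hence excluded; so the transition at $v$ is between two \emph{distinct} ossicles. Since $v$ is trivalent there are exactly three such pairs, $\{A,C\}$, $\{A,B\}$, and $\{B,C\}$. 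Matching these against the four listed forms (noting that forms (i) and (ii) are reverses of one another, as allowed by the ``$\ell$ or its reversal'' phrasing) reduces the lemma to identifying, for each ossicle pair, the two endpoints of the segment.

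To set up the bookkeeping I would fix the clockwise order of the $r$ strands around $v$ --- the $q$ strands of the $A$-ossicle, the $r-p-q$ strands of the $B$-ossicle, and the $p$ strands of the $C$-ossicle --- and, using the hourglass twisting convention, transport these labels to the matching clockwise orders around $a,b,c$. The key tool is \Cref{lem:tunnel}: a trip entering an ossicle on the counterclockwise-$j$th strand exits on the counterclockwise-$j$th strand, so the strand index at $v$ equals the strand index at the corresponding center. Thus the $i$-th rightmost turn at $v$ pins down the strand indices on both outgoing ossicles, and the $i$-th leftmost turns at the centers then pin down the boundary edges (or continuations) at which the segment starts and ends.

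Carrying out this computation in each case is the substance of the proof. For the pair $\{A,C\}$ I would show the segment enters $a$ from a lobe-of-$A$ edge, turns into the $A$-ossicle, crosses $v$ into the $C$-ossicle, and at $c$ turns onto a lobe-of-$C$ edge, giving forms (i)/(ii); the turn-count arithmetic is what forces both endpoints into the lobes rather than elsewhere in the claws. For the pair $\{A,B\}$, tracing the $B$-ossicle end to $b$ shows the segment meets the boundary of $B$ along the flap of $C$ (the clockwise-last $q$ boundary edges of $B$), while at the $A$-end the $i$-th leftmost turn at $a$ overshoots all $p$ lobe edges and continues into the rest of $G$ rather than terminating in the lobe of $A$; this is form (iii). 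The pair $\{B,C\}$ is handled by the mirror symmetry of the ear interchanging $A\leftrightarrow C$ and $p\leftrightarrow q$, yielding form (iv).

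The main obstacle is precisely this strand-and-turn bookkeeping, and in particular proving the ``overshoot'' assertion in forms (iii) and (iv) --- that the segment leaving $v$ on the $A$- (resp. $C$-) ossicle lands strictly past the lobe of $A$ (resp. $C$). This requires comparing the turning index $i$ against the clockwise positions of the lobe edges, the ossicle strands, and the remaining edges of $a$ (resp. $c$) into the rest of $G$, and it is where the hypotheses on the ear type $(p,q)$ and on the sizes of $A$ and $C$ (which guarantee the lobes are well-defined within their claws) enter. I expect to organize the argument using the $A\leftrightarrow C$ symmetry to halve the casework and \Cref{lem:tunnel} to keep the strand indices aligned, leaving each case a finite arithmetic verification.
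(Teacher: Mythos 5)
Your proposal is correct and takes essentially the same route as the paper's proof: both reduce the statement to a case analysis of which pair of distinct ossicles the segment uses at the trivalent black vertex $v$ (same-ossicle transitions being excluded as trivial loops) and then invoke \Cref{lem:tunnel} to pin down the entry and exit positions in the lobes, flaps, or the rest of $G$. The paper's write-up is simply terser, leaving the strand-index and turn-count bookkeeping you describe implicit in the assertion that the tunneling claim ``forces'' each of the four forms.
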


\begin{proof}
    Suppose $\ell$ passes through the $A$-ossicle. By \Cref{lem:tunnel}, it must pass through either the $B$- or $C$-ossicle. If it passes through the $C$-ossicle, then \Cref{lem:tunnel} forces it to pass from the lobe of $A$ to the lobe of $C$ as in case (i). If it passes through the $B$-ossicle, then \Cref{lem:tunnel} forces it to pass through the flap of $C$ on one end and to continue into $G$ on the other, not into the lobe of $A$, as in case (iii).

    Symmetrically, if $\ell$ passes through the $C$-ossicle, we have cases (ii) or (iv). Finally, if $\ell$ passes through the $B$-ossicle, then \Cref{lem:tunnel} forces it to pass through the $A$- or $C$-ossicle, which has already been handled.
\end{proof}

\begin{corollary}\label{cor:not-in-lobes-flaps}
    If $j$ and $\trip_i(G)(j)$ are not in the lobes or flaps of the ear, then \[\vec{\trip}_i(G)(j) = \vec{\trip}_i(H)(j).
    \]
\end{corollary}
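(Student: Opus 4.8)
The plan is to deduce the statement from \Cref{lem:trips-ossicles} together with the fact that, away from the ear, the graphs $G$ and $H$ are literally the same. First I would record the contrapositive reformulation of \Cref{lem:trips-ossicles}: \emph{if a trip segment of $G$ passes through any ossicle, then at least one of its two endpoints lies in a lobe or flap of the ear.} This is immediate, since in each of the four cases (i)--(iv) the segment or its reversal begins in a lobe or a flap. Consequently, under the hypothesis that neither $j$ nor $\trip_i(G)(j)$ lies in a lobe or flap, the segment $\vec{\trip}_i(G)(j)$ must avoid all three ossicles.

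Next I would argue that a segment avoiding the ossicles visits only vertices and edges common to $G$ and $H$. Since every edge incident to $v$ is an ossicle, the segment avoids $v$. Since the only non-leaf edge at $b$ is the $B$-ossicle, an interior passage through $b$ would require that ossicle, whereas a passage between two leaves at $b$ would force an endpoint into a flap; both are excluded, so the segment avoids $b$ as well. The same reasoning applies to the white centers $a$ and $c$: their unique non-leaf edges are the $A$- and $C$-ossicles respectively, so any interior passage through $a$ or $c$ would use an ossicle. Hence either the segment avoids $a$ and $c$ entirely, or it has the short form $b_j \to a \to b_{j'}$ (resp.\ through $c$) with both endpoints boundary edges of $A$ (resp.\ $C$).

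The main point, and the step I expect to require the most care, is that the turning behavior at $a$ and $c$ agrees in $G$ and $H$. Around $a$ the boundary edges of the claw $A$ are consecutive in clockwise order, and the remaining $q$ strand-ends belong to the $A$-ossicle in $G$, while in $H$ these same cyclic positions are occupied by the $q$ newly attached boundary leaves of $A'$. Thus the cyclic sequence of strand-ends at $a$ matches in $G$ and $H$ under the correspondence carrying each ossicle strand to the leaf in its position, and $b_j$ occupies the same slot in both. Counting the $i$-th leftmost turn from $b_j$ therefore reaches the same cyclic position in both graphs; since the segment avoids the $A$-ossicle in $G$, that position is a genuine boundary edge of $A$, which is identical in $G$ and $H$, so the turn lands on the same $b_{j'}$, and symmetrically at $c$. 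Combining this with the fact that at every other, shared, internal vertex the local edge data and hence the turn is unchanged, the walks $\vec{\trip}_i(G)(j)$ and $\vec{\trip}_i(H)(j)$ proceed identically step by step and so coincide, completing the proof.
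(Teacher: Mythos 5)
Your overall route is the same as the paper's: take the contrapositive of \Cref{lem:trips-ossicles} to conclude that the segment avoids all three ossicles, and then argue that it is therefore confined to the part of $G$ unchanged by ear removal. (The paper's proof consists of exactly these two sentences; your third paragraph, checking that the turning counts at $a$ and $c$ agree because the ossicle strands are replaced by the same number of leaves in the same cyclic positions, is a point the paper glosses over, and it is a worthwhile addition.)

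There is, however, a genuine error in your second paragraph: you claim that the unique non-leaf edges at the white centers $a$ and $c$ are the $A$- and $C$-ossicles. The definition of an ear only requires that $b$ have no edges other than those depicted; the centers $a$ and $c$ are permitted to carry additional internal edges into the rest of $G$, and in essentially every example they do (in a Fraser graph, a claw center is joined to one black vertex for each triangle incident to the corresponding polygon vertex, and nothing forces $|A|+q=r$). Consequently your dichotomy --- the segment either avoids $a$ and $c$ entirely or passes through them boundary-to-boundary --- is incomplete: a segment may enter $a$ along one of these extra internal edges and leave along another such edge, or along a boundary edge of $A$ outside the lobe, without touching any ossicle, lobe, or flap. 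The slip is repairable with what you already wrote: the turning-count argument of your third paragraph applies verbatim to a passage through $a$ (or $c$) entering on an \emph{arbitrary} strand, since the correspondence between strand-ends at $a$ in $G$ and in $H$ (ossicle strands to new leaves, all other strands fixed) preserves cyclic positions, so the $i$-th leftmost turn from any entry strand lands on the same shared strand in both graphs (it cannot land on an ossicle strand, as the segment avoids ossicles). Deleting the false dichotomy and invoking that argument at every visit to $a$ or $c$ completes the proof, at a level of detail finer than the paper's own.
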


\begin{proof}
    By \Cref{lem:trips-ossicles}, $\vec{\trip}_i(G)(j)$ cannot pass through the ossicles. Hence it is entirely confined to the portions of $G$ which are identical in $H$.
\end{proof}

We will also require the following more precise version of \Cref{lem:trips-ossicles} in the case of Fraser graphs.

\begin{lemma}\label{lem:trips-G-H}
    Let $G$ be Fraser. In the cases of \Cref{lem:trips-ossicles}, we have the following corresponding trip segments $\ell'$ in $H$.
    \begin{enumerate}[(i)]
        \item $\ell'$ begins in the lobe of $A'$ at the same relative place as $\ell$ begins in the lobe of $A$, and then $\ell'$ immediately ends in the flap of $C'$ at the same relative place as $\ell$ ends in the lobe of $C$.
        \item $\ell'$ begins in the lobe of $C'$ at the same relative place as $\ell$ begins in the lobe of $C$, and then $\ell'$ immediately ends in the flap of $A'$ at the same relative place as $\ell$ ends in the lobe of $A$.
        \item $\ell'$ begins in the flap of $C'$ at the same relative place as $\ell$ begins in the flap of $C$, and then $\ell'$ continues exactly as $\ell$ continues once it leaves the $A$-ossicle.
        \item $\ell'$ begins in the flap of $A'$ at the same relative place as $\ell$ begins in the flap of $A$, and then $\ell'$ continues exactly as $\ell$ continues once it leaves the $C$-ossicle.
    \end{enumerate}
    Moreover,
    \begin{enumerate}[(a)]
        \item there is a unique trip between any two distinct vertices of the lobes of $A$ and $C$, which either falls in cases (i) or (ii) or stays entirely within a single lobe, and
        \item in cases (iii) and (iv), $\ell$ does not re-enter the ear after having left.
    \end{enumerate}
\end{lemma}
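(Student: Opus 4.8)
The plan is to prove every assertion by tracing trip segments locally through the ear, using Claim~\ref{lem:tunnel} as the only device for crossing the twisted ossicles, together with the turning rules at the white centers $a,c$ and the black vertex $v$. First I would record the local data: the $A$-ossicle has multiplicity $q$, the $C$-ossicle multiplicity $p$, and the $B$-ossicle multiplicity $r-p-q$; and, since $G$ is Fraser, the lobe of $A$ is joined to the flap of $A$ by $p$ nested arcs while the lobe of $C$ is joined to the flap of $C$ by $q$ nested arcs. On the $H$ side, the ear-removal construction makes $A'$ the old claw $A$ followed clockwise by the $q$ new edges constituting the flap of $C'$, and makes $C'$ the $p$ new edges constituting the flap of $A'$ followed clockwise by the old claw $C$. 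Thus passing from $G$ to $H$ collapses the route ``$A$-ossicle, $v$, $C$-ossicle'' into a single turn at a claw center, and the new flap edges are placed exactly to record the boundary positions that this collapsed route reached.

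The heart of the argument is a single local computation: how a $\trip_i$-segment behaves at a white claw center as a function of the counterclockwise index of its incoming edge, i.e.\ whether its $i$-th leftmost turn lands on another claw edge or on a specific strand of the incident ossicle, and at which index. Composing this with Claim~\ref{lem:tunnel} (which preserves the counterclockwise strand index across each hourglass) and the analogous rule at $v$, I can follow $\ell$ through the ear and read off the corresponding $\ell'$. In cases (i)/(ii) the whole passage lobe $\to$ ossicle $\to v \to$ ossicle $\to$ lobe collapses, in $H$, to one turn at $a$ (resp.\ $c$) carrying a lobe edge of $A'$ to a flap-of-$C'$ edge (resp.\ lobe of $C'$ to flap of $A'$); the ``same relative place'' statements then amount to the index identity supplied by this bookkeeping. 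In cases (iii)/(iv), $\ell$ enters at a flap edge, crosses the $B$- then $A$-ossicle (resp.\ $C$-ossicle) and emerges at $a$ (resp.\ $c$), and the matching $\ell'$ starts at the corresponding flap-of-$C'$ (resp.\ flap-of-$A'$) edge; since $G$ and $H$ agree away from the deleted ear, $\ell'$ then continues identically, as in \Cref{cor:not-in-lobes-flaps}.

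For the ``moreover'' assertions I would argue as follows. Part (a) is a finite completeness-and-uniqueness count: uniqueness holds because distinct turn indices produce distinct exits at the claw centers, and existence follows by checking that the trips through $a$, $v$, $c$ join every pair of the $p+q$ lobe edges, with each lobe vertex connected to the other $p+q-1$ lobe vertices (the remaining $r-p-q$ trips from each lobe vertex leaving the pair of lobes). For part (b) the key reduction is that re-entering the ear after leaving it through a center would force $\ell$ to reach $v$ a second time, since $v$ is accessible only across the ossicles and the flap edges are already spent as endpoints; I would then exclude this revisit of $v$ using the correspondence with $H$ and the fact that the two graphs coincide outside the ear.

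The step I expect to be the main obstacle is the index bookkeeping in the second paragraph: aligning the counterclockwise strand numbering inside the twisted hourglasses with the boundary-edge positions of the claws in $H$, and fixing the precise clockwise arrangement of edges around $a$ and $c$ so that the ``same relative place'' identifications are literally correct. This is made fussy by the label crossing built into the ear---the lobe and flap of $A$ have $p$ edges while the $A$-ossicle has multiplicity $q$, and symmetrically for $C$---so the indices must be tracked with care. Establishing part (b) without already assuming $G$ is fully reduced is the other delicate point, as it is a local instance of the very no-self-intersection property this section sets out to prove.
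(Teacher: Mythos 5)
Your plan for parts (i)--(iv) is sound and matches the paper's approach: the paper proves these by exactly the kind of careful strand-index tracing through the ossicles (via \Cref{lem:tunnel}) that you describe. The genuine gap is in parts (a) and (b), where you attempt purely local arguments; both statements are in fact \emph{global} uniqueness claims, and the paper proves them by invoking \Cref{thm:trip=prom} (already established in the previous section, so freely available here) together with \Cref{prop:prom1}: for any boundary vertex $u$ of a Fraser graph, the $r-1$ endpoints $\trip_i(G)(u)$ are pairwise distinct and different from $u$, so there is \emph{at most one} trip segment between any two boundary vertices. For (a), your counting argument only shows that the direct trips through $a$, $v$, $c$ hit each of the other $p+q-1$ lobe vertices exactly once; it says nothing about where the remaining $r-p-q$ trips end. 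Nothing local prevents one of those trips from wandering through $G$, returning through $a$ or $c$, and terminating at a lobe vertex --- which would destroy uniqueness. Ruling this out is precisely what the distinct-endpoints fact from $\trip_\bullet = \prom_\bullet$ accomplishes, and your proposal never invokes it.

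Your argument for (b) has the same problem and is moreover circular as stated. Re-entering the ear does force a second ossicle passage (reaching $v$ again), and the strand bookkeeping then forces $\ell$ to exit through the $B$-ossicle and terminate at some vertex $w$ of the claw $B$; but you cannot ``exclude this revisit of $v$ using the correspondence with $H$,'' because the correspondence of (iii)/(iv) --- that $\ell'$ continues exactly as $\ell$ --- is only valid up to the moment of re-entry: at re-entry, $\ell'$ simply ends at a new flap vertex of $H$ while $\ell$ continues, and no contradiction arises from the two graphs coinciding outside the ear. The paper instead closes the argument globally: $\ell$ would be a trip from $u \in B$ to $w \in B$, but there is also a direct trip from $u$ to $w$ confined to the claw $B$ (through its center $b$, never entering the $B$-ossicle), giving two distinct trips between the same pair of boundary vertices and contradicting the distinctness of $\trip_\bullet(G)(u)$. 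Your closing remark that (b) is delicate ``without already assuming $G$ is fully reduced'' shows you sensed the difficulty, but the resolution is not to find a subtler local argument --- it is to use \Cref{thm:trip=prom} and \Cref{prop:prom1}, which your proposal omits entirely.
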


\begin{proof}
    The proof of (i)--(iv) only requires examining the argument for \Cref{lem:trips-ossicles} a little more carefully. For (b), suppose $\ell$ starts at some vertex $u$ in the claw $B$, and suppose to the contrary $\ell$ later re-enters the ear. The reversal of $\ell$ must belong to case (iii) or (iv), i.e.~$\ell$ ends at some vertex $w$ of the claw $B$. By \Cref{thm:trip=prom}, $\trip_\bullet(G)(u) = \prom_\bullet(T(G))(u)$. By \Cref{prop:prom1}, the collection of endpoints $\trip_\bullet(G)(u)$ are distinct and do not include $u$. Thus $w \neq u$ and there is at most one trip from $u$ to $w$. However, there is a trip segment directly from $u$ to $w$ which does not enter the $B$-ossicle, a contradiction. The argument for (a) is very similar.
\end{proof}

\begin{corollary}\label{cor:no_self_intersection}
    Fraser graphs have no self-intersections.
\end{corollary}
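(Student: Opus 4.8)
The plan is to induct on the number of vertices, removing an ear at each step and invoking the structural results already in place. I would take a Fraser graph $G$ with an ear $(A,B,C)$ — which exists unless $G$ is a disjoint union of two claws, by \Cref{lem:proper-ears} — let $H$ be the Fraser graph obtained by removing it (so $H$ has fewer vertices, by \Cref{lem:ear_removal}), and assume inductively that $H$ has no self-intersections. The base case is the disjoint union of two claws: each claw is a star, so every trip segment enters its white center exactly once, and no self-intersection is possible.

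For the inductive step I would fix an arbitrary trip segment $\ell$ of $G$ and split on whether it passes through an ossicle. If $\ell$ uses no ossicle, then it never reaches the deleted vertex $v$ (all of whose edges are ossicles); if it reaches $b$ it can only enter and leave along boundary edges of $B$, so it is a single-claw segment that visits $b$ once and is trivially free of self-intersections, while otherwise $\ell$ lies in the region shared with $H$ and I would argue it coincides with a trip segment of $H$ and so has no self-intersection by induction. If $\ell$ does pass through an ossicle, I would invoke \Cref{lem:trips-ossicles}: in cases (i) and (ii) the segment runs lobe-to-lobe through $a, v, c$ and then terminates at the boundary, visiting each of these once, so it has no self-intersection; in cases (iii) and (iv), \Cref{lem:trips-G-H}(b) guarantees that $\ell$ traverses the ear exactly once and thereafter follows the corresponding segment $\ell'$ of $H$, so any repeated vertex of $\ell$ must lie in the $H$-portion and would force a self-intersection of $\ell'$, contradicting the inductive hypothesis.

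The main obstacle is the no-ossicle case, specifically the claim that such an $\ell$ really is a trip segment of $H$. Removing the ear changes the local turning data at the claw centers $a$ and $c$: in $G$ these are incident to the $A$- and $C$-ossicles, whereas in $H$ those ossicle strands are replaced by the newly added boundary edges of the enlarged claws $A'$ and $C'$. I would verify that this replacement preserves the outcome of the $i$-th leftmost turn for a trip entering $a$ (or $c$) from an original boundary edge without taking the ossicle: since the added boundary edges occupy exactly the cyclic positions of the ossicle strands they replace, the turn lands on the same edge and the trip behaves identically. When both endpoints avoid the lobes and flaps this identification is immediate from \Cref{cor:not-in-lobes-flaps}; the remaining no-ossicle trips are precisely those entering a claw center and exiting along another original boundary edge of the same claw, which the positional-matching observation above handles.
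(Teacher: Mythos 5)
Your proposal is correct and follows essentially the same route as the paper's proof: induction via ear removal, with the case split on whether the trip segment uses an ossicle, handled by \Cref{lem:trips-ossicles} and \Cref{lem:trips-G-H}(b) exactly as in the paper. Your additional verification that the turning behavior at the claw centers $a$ and $c$ is unchanged when ossicle strands are replaced by boundary edges is a detail the paper leaves implicit (in \Cref{cor:not-in-lobes-flaps} and the phrase ``portions of $G$ which are identical in $H$''), so it strengthens rather than diverges from the argument.
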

\begin{proof}
    We induct on the number of vertices of a Fraser graph. In the base case, $G$ is a disjoint union of claws, and the result is straightforward. Otherwise, we have $G$ and $H$ as above, where by induction $H$ has no self-intersections.

    First suppose $\ell$ is a trip segment in $G$ which does not reach the boundary. By \Cref{lem:trips-ossicles}, $\ell$ cannot travel through an ossicle, so it is entirely contained in the portions of $G$ which are identical in $H$, contradicting the fact that trip segments in $H$ reach the boundary.

    Now suppose $\ell$ is a trip segment in $G$ which reaches the boundary. If $\ell$ does not pass through an ossicle, then it is either entirely confined to the portions of $G$ which are identical in $H$, or it is confined to the claw of $B$. In the former case, $\ell$ is the same in $H$ and $G$ and hence has no self-intersections. In the latter case, $\ell$ is trivial and does not have a self-intersection.

    Finally, suppose $\ell$ passes through an ossicle, so \Cref{lem:trips-ossicles} applies. \Cref{lem:trips-ossicles}(i)--(ii) do not involve self-intersections. \Cref{lem:trips-ossicles}(iii)--(iv) likewise do not involve self-intersections since $\ell'$ in \Cref{lem:trips-G-H}(iii)--(iv) does not re-enter the ear after having left and by induction $\ell'$ does not have a self-intersection.
\end{proof}

We will need the following topological result concerning fully reduced hourglass plabic graphs.

\begin{lemma}\label{lem:fully-reduced-crossings}
    Let $G$ be a fully reduced hourglass plabic graph. Suppose $\ell_1$ is a $\trip_i$-segment from $a$ to $b$ and $\ell_2$ is a $\trip_j$-segment from $c$ to $d$, where $a, b, c, d$ are distinct, $a$ is their minimum, $c < b$, and either $j=i$ or $j = i\pm1$. Then
    \begin{enumerate}[(i)]
        \item $a < c < d < b$ and $\ell_1$, $\ell_2$ have no essential intersections; or
        \item $a < d < c < b$ and $\ell_1$, $\ell_2$ have an even number of essential intersections; or
        \item $a < c < b < d$ and $\ell_1$, $\ell_2$ have an odd number of essential intersections.
    \end{enumerate}
\end{lemma}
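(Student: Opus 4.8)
The plan is to split the statement into a purely topological parity computation, which handles cases (ii) and (iii) outright and the parity half of (i), and a separate rigidity argument that upgrades ``even'' to ``zero'' in case (i); the fully reduced hypothesis enters only in this last step, and that is where I expect the real work to be.

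First I would note that the three displayed orderings are exhaustive: since $a$ is the minimum and $c<b$, the vertex $d$ lies either in the label-interval $(c,b)$, in $(a,c)$, or beyond $b$, giving (i), (ii), (iii) respectively. Because $G$ is fully reduced it has no self-intersections (\Cref{def:fully_reduced_2}), so each of $\ell_1,\ell_2$ is a simple arc in the disk; after contracting shared edges as in \Cref{def:intersections}, I may view them as two simple arcs meeting transversally at the essential intersections and tangentially at the inessential ones. For the parity I would invoke the Jordan arc theorem: $\ell_1$ cuts the disk into two regions, namely $R$, bounded by $\ell_1$ and the clockwise boundary arc $\alpha$ from $a$ to $b$ (which, as $a$ is minimal, carries exactly the labels in $(a,b)$), and its complement $R'$. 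Traversing $\ell_2$, each essential intersection switches the region while each inessential one does not, so the parity of the number of essential intersections records whether $c$ and $d$ lie in the same region. In (i) and (ii) both $c,d\in(a,b)$ lie on $\alpha\subset\partial R$, giving an even count; in (iii) we have $c\in(a,b)$ but $d\notin(a,b)$, so $c,d$ lie in different regions and the count is odd. This proves (ii), (iii), and the parity half of (i).

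It remains to show the count is exactly zero in case (i), and here is where I expect the main obstacle; crucially, the defining inequality $c<d$ of (i) (versus $d<c$ in (ii)) is what forces the conclusion. Suppose for contradiction there are essential intersections; by parity there are at least two, say $P_1,\dots,P_m$ in forward order along $\ell_1$. If the forward order along $\ell_2$ is anything but the full reversal $P_m,\dots,P_1$, then some pair occurs in the same forward order on both segments, i.e.\ $\ell_1,\ell_2$ form an oriented double crossing; since $j\in\{i-1,i,i+1\}$ this is a bad double crossing, contradicting \Cref{def:fully_reduced_2}. To rule out the reversal, I would read off the cyclic order of $\partial R$ as $a,c,d,b,P_m,\dots,P_1$ (the crossings appearing in reverse as $\ell_1$ runs from $b$ back to $a$). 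The initial sub-arc of $\ell_2$ out of $c$ and its final sub-arc into $d$ both lie in $R$, and under the reversal they would join $c$ to $P_m$ and $P_1$ to $d$. Comparing cyclic positions shows these two chords of the disk $R$ are linked, so the sub-arcs would have to cross, contradicting that $\ell_2$ is simple. The identical computation with $c,d$ interchanged is precisely why (ii) permits the reversal, and hence a nonzero even count, so the asymmetry between (i) and (ii) in the lemma is accounted for.

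The one technical point I would handle carefully is that these sub-arcs may touch $\ell_1$ inessentially along the way; since such tangencies do not switch regions, I can push them slightly into $R$ and apply the linking-of-chords argument to honestly disjoint interior arcs, leaving their endpoints on $\partial R$ unchanged. With that caveat resolved, case (i) admits no essential intersection, completing the proof.
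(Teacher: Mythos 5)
Your proposal is correct and takes essentially the same approach as the paper: the paper's entire proof is to draw the two segments and observe that, since $\ell_1$ and $\ell_2$ have no self-intersections and no bad double crossings, only the three configurations of \Cref{fig:topological} can occur. Your Jordan-arc parity argument, the observation that a non-reversed crossing order would give a bad double crossing, and the linking-of-chords argument ruling out the fully reversed order in case (i) are a rigorous elaboration of exactly that figure-based case analysis.
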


\begin{proof}
    Draw the trip segments and their essential intersections. Using the fact that $\ell_1$ and $\ell_2$ do not have self-intersections or a bad double crossing results in only the configurations of \Cref{fig:topological}.
\end{proof}

\begin{figure}[htp]
    \centering
    \includegraphics[width=0.3\textwidth]{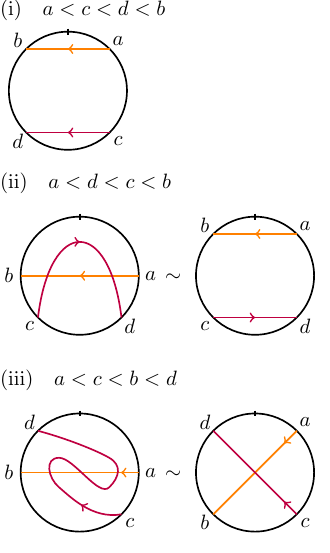}
    \caption{Possible configurations of trip segments in \Cref{lem:fully-reduced-crossings}.}
    \label{fig:topological}
\end{figure}

We are now ready to prove the main result of this section.

\begin{theorem}\label{thm:fraser-implies-fully-reduced}
Fraser graphs are fully reduced. 
\end{theorem}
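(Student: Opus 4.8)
The plan is to prove \Cref{thm:fraser-implies-fully-reduced} by induction on the number of vertices, following the same ear-removal strategy used for \Cref{thm:trip=prom} and \Cref{cor:no_self_intersection}. We have already established the ``no isolated components'' condition (Fraser graphs are connected through the boundary by construction) and the ``no self-intersections'' condition in \Cref{cor:no_self_intersection}. What remains is to rule out \emph{bad double crossings}: oriented double crossings between two $\trip_i$-segments, and between a $\trip_i$- and a $\trip_{i+1}$-segment. For the base case, $G$ is a disjoint union of two claws, where no two nontrivial trip segments even cross, so there is nothing to check. For the inductive step, we take $G$ with ear $(A,B,C)$, remove it to obtain $H$, and assume $H$ is fully reduced.

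The key step is to show that any bad double crossing in $G$ would descend to a bad double crossing in $H$, contradicting the inductive hypothesis. First I would use \Cref{lem:trips-ossicles} and \Cref{lem:trips-G-H} to set up a correspondence between trip segments $\ell$ in $G$ and trip segments $\ell'$ in $H$. By \Cref{cor:not-in-lobes-flaps}, any trip segment whose endpoints avoid the lobes and flaps is literally identical in $G$ and $H$, so crossings among such segments transfer verbatim. The segments requiring care are exactly those passing through an ossicle, which by \Cref{lem:trips-ossicles} fall into the four explicit cases (i)--(iv), with their $H$-counterparts described in \Cref{lem:trips-G-H}. The crucial observation is that all of the ``new'' geometry introduced by the ear --- the claw $B$, the vertex $v$, and the three ossicles --- is topologically a disk attached along the boundary arc spanned by the lobes and flaps, and any essential intersection inside this region can be pushed out: by \Cref{lem:trips-G-H}(b), a segment in case (iii) or (iv) never re-enters the ear once it leaves, and by \Cref{lem:trips-G-H}(a) the trips between the two lobes are unique. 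This means two trip segments cannot accumulate essential intersections \emph{inside} the ear region in a way that is lost upon removal; any essential intersection in $G$ either occurs in the shared portion (where it matches an essential intersection of $\ell'$ and $\ell''$ in $H$) or occurs at the single vertex $v$, which collapses to at most one essential intersection near the boundary of $H$.

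I would then argue that a hypothetical bad double crossing between $\ell$ and $\ell'$ in $G$ yields, under this correspondence, two $H$-segments with the same two consecutive essential crossings in the same forward order --- that is, a bad double crossing in $H$. The parity bookkeeping in \Cref{lem:fully-reduced-crossings} is the right tool here: since $\ell_1,\ell_2$ are of type $\trip_i,\trip_j$ with $j=i$ or $j=i\pm 1$, the number of essential intersections between two segments in $H$ is controlled purely by the cyclic positions of their four endpoints, and the ear-removal operation preserves those endpoints' relative cyclic order (as tracked through the lobe/flap relabeling in \Cref{lem:trips-G-H}(i)--(iv)). Thus the presence or absence of an oriented double crossing is an invariant of the endpoint configuration together with the shared middle portion, and both are preserved by ear removal. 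Since $H$ is fully reduced, $G$ has no bad double crossing either.

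The main obstacle I expect is the careful case analysis at the vertex $v$ and the three ossicles: one must verify that when a segment of type (iii) or (iv) enters $v$ via the $B$-ossicle and exits via the $A$- or $C$-ossicle, the essential intersection it may form with another ear-traversing segment \emph{at $v$} corresponds to exactly one essential intersection in $H$ (neither spuriously created nor destroyed), so that no double crossing appears or disappears in the transfer. The subtle point is that two distinct segments can both pass through $v$, and their crossing behavior at $v$ is governed by the relative turning orders forced by \Cref{lem:tunnel}; I would use the ``same relative place'' statements of \Cref{lem:trips-G-H} to pin down that the turning order at $v$ in $G$ is inherited from the turning order at the merged boundary location in $H$, ruling out any discrepancy. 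Handling the reflection case (when the ear lies on the other side of the barrier) follows formally, as in the proof of \Cref{thm:trip=prom}, since reflection through a diameter preserves the fully reduced condition and conjugates $\trip_\bullet$ by $w_0$.
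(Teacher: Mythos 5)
Your induction setup, base case, and use of \Cref{cor:no_self_intersection}, \Cref{lem:trips-ossicles}, \Cref{cor:not-in-lobes-flaps}, and \Cref{lem:trips-G-H} all match the paper's proof. But the core of your argument --- that any bad double crossing in $G$ ``descends'' to a bad double crossing in $H$, so that the inductive hypothesis finishes the proof --- fails in exactly the case that is hardest, and the paper's proof is structured around this failure. Consider two segments $\ell_1, \ell_2$ that enter the ear through \emph{different} flaps (one through the flap of $A$, one through the flap of $C$) and both pass through the $B$-ossicle. These two segments necessarily have one essential crossing at the internal vertex $v$ \emph{inside} the ear, and this crossing is destroyed by ear removal: the corresponding segments $\ell_1', \ell_2'$ in $H$ simply start at boundary vertices of the flaps of $A'$ and $C'$ and never meet at any vertex corresponding to $v$. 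Consequently, if $\ell_1'$ and $\ell_2'$ had even a \emph{single} essential crossing in $H$ --- which the fully reducedness of $H$ does not forbid (a lone crossing is not a double crossing; see \Cref{lem:fully-reduced-crossings}(iii)) --- then $G$ would have a bad double crossing that is invisible in $H$. So your claim that ``the presence or absence of an oriented double crossing is \ldots preserved by ear removal'' is false precisely here, and the inductive hypothesis alone cannot close this case.

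What the paper does instead is prove the strictly stronger statement that $\ell_1'$ and $\ell_2'$ have \emph{no} essential crossing in $H$ at all. This cannot be done by topology or bookkeeping alone: it requires knowing \emph{where} these segments end, which is where \Cref{thm:trip=prom} together with \Cref{prop:prom1} enters. One reads the endpoints $u''$ of $\ell_1'$ and $v''$ of $\ell_2'$ off the underlying noncrossing matching of $H$ (whose local structure near the removed ear is a nested family of $p+q$ arcs), and an opener/closer count (the $\alpha < \beta$ argument in the paper, illustrated by the arches figure) shows the four endpoints are \emph{nested}: $v' < u' < u'' < v''$. Then \Cref{lem:fully-reduced-crossings}(i) --- not used as parity bookkeeping, but as the statement that nested endpoints force zero essential intersections --- gives the conclusion. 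This quantitative step, which is the real content of the proof, is absent from your proposal; without it the argument has a genuine gap. (Two minor points: in the base case, two trip segments in a claw can cross essentially at the center, but this is harmless since a single crossing is not a double crossing; and no reflection-through-the-barrier case is needed for this theorem, since the argument never invokes \Cref{prop:prom1} at position $1$ specifically.)
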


\begin{proof}
    We proceed by induction on the number of vertices of a Fraser graph. In the base case, $G$ is a disjoint union of claws, and the result is straightforward. Otherwise we have $G$ and $H$ as above, where $H$ is Fraser, hence fully reduced by the inductive hypothesis, and $G$ has no self-intersections by \Cref{cor:no_self_intersection}.

    Suppose $\ell_1$ is a $\trip_i$-segment of $G$ and $\ell_2$ is a distinct $\trip_i$- or $\trip_{i \pm 1}$-segment of $G$. If $\ell_1$ and $\ell_2$ do not begin or end at the lobes or flaps, then they are trip segments of $H$ by \Cref{cor:not-in-lobes-flaps}, and hence do not have bad double crossings by the inductive hypothesis. Similarly, if $\ell_1$ does not begin or end at the lobes or flaps and $\ell_2$ does, by \Cref{lem:trips-G-H} we see that $\ell_1$ and $\ell_2$ do not have bad double crossings. In this way, we may consider the case when $\ell_1$ and $\ell_2$ both either begin or end in the lobes or flaps.

    If $\ell_1$ and $\ell_2$ begin and end in the lobes, then from \Cref{lem:trips-G-H}(i)--(ii) they have at most one essential crossing. If $\ell_1$ begins and ends in the lobes and $\ell_2$ begins or ends in the flaps, then from \Cref{lem:trips-G-H}(i)--(ii) and \Cref{lem:trips-G-H}(iii)--(iv) they have at most one essential crossing. Thus we may consider the case when both $\ell_1$ and $\ell_2$ either begin or end in the flaps. If one of them does not use the $B$-ossicle, the result is again clear, so suppose both of them use the $B$-ossicle and hence travel through $G$ outside of the ear.

    Suppose $\ell_1$ begins in the flaps and $\ell_2$ ends in the flaps. Since $\ell_1'$ and $\ell_2'$ from \Cref{lem:trips-G-H}(iii)--(iv) do not have bad double crossings, the corresponding portions of $\ell_1$ and $\ell_2$ outside the ear do not have bad double crossings. The additional portions of $\ell_1$ and $\ell_2$ are at opposite ends and cannot create bad double crossings. Thus we may suppose $\ell_1$ and $\ell_2$ both begin or both end in the flaps. Without loss of generality, suppose they both begin in the flaps.

    First suppose $\ell_1$ and $\ell_2$ both begin in the same flap, say the flap of $A$. They then travel together through the $C$-ossicle before leaving the ear and continuing on, behaving exactly as the segments $\ell_1'$ and $\ell_2'$ in $H$ starting from the flap of $A'$. By inductive hypothesis, these latter segments do not have bad double crossings, so the same is true of $\ell_1$ and $\ell_2$.

    Finally suppose $\ell_1$ begins in the flap of $A$ and $\ell_2$ begins in the flap of $C$ before traveling outside the ear. They immediately cross in the ear before exiting on opposite sides into the rest of $G$. This latter phase corresponds precisely to the walks taken by $\ell_1'$ and $\ell_2'$ in $H$ from \Cref{lem:trips-G-H}(iii)--(iv). To avoid a bad double crossing, we must show that $\ell_1'$ and $\ell_2'$ do not have an essential crossing in $H$.
    
    Say that $\ell_1'$ begins at some vertex $u'$ of the flap of $A'$ and $\ell_2'$ begins at some vertex $v'$ of the flap of $C'$. Consider the values of $\trip_\bullet(H)(u')$ and $\trip_\bullet(H)(v')$. By \Cref{thm:trip=prom} and \Cref{prop:prom1}, these can be read off from the underlying perfect matching of $H$, which we now describe. Before ear removal, the lobe and flap of $A$ form a nested sequence of $p$ matched edges, and the lobe and flap of $C$ form an adjacent nested sequence of $q$ matched edges. After ear removal, the $p+q$ elements in the lobe of $A'$ and the flap of $C'$ match with the $p+q$ elements in the flap of $A'$ and the lobe of $C'$. In \Cref{fig:arches}, we draw the underlying perfect matching of $H$ schematically starting with this nested sequence of $p+q$ edges, which is followed by an arbitrary perfect matching $M'$. Examining the openers and closers with respect to the positions of $u'$ and $v'$ yields the possible values of $\trip_\bullet(H)(u')$ and $\trip_\bullet(H)(v')$. Since $\ell_1'$ and $\ell_2'$ by assumption enter the rest of $H$, they must end at one of the points of $M'$. Say $\ell_1'$ ends at $u''$ and $\ell_2'$ ends at $v''$.

    We claim that, in the order of \Cref{fig:arches}, $v' < u' < u'' < v''$, in which case the result follows from \Cref{lem:fully-reduced-crossings}(i). These inequalities are clear, except for $u'' < v''$. First note that since $v'$ is in the flap of $C'$, we have $0 \leq \alpha < q$, and since all $q$ elements of the lobe of $C'$ are closers with respect to $u'$, we have $q \leq \beta$. In particular, $\alpha < \beta$. Now the values of $\trip_i(H)(v')$ in $M'$ are the closers of $M'$ from left to right, with $i=\alpha+1, \alpha+2, \ldots$, as in \Cref{fig:arches}. Similarly the values of $\trip_i(H)(u')$ in $M'$ are the openers of $M'$ from left to right, with $i=\beta+1, \beta+2, \ldots$. Since $\alpha < \beta$, it follows that $\trip_i(H)(v')$ and $\trip_{i \pm 1}(H)(v')$ are further to the right than $\trip_i(H)(u')$, so indeed $u'' < v''$, which completes the proof.
\end{proof}

\begin{figure}[htp]
    \centering
    \includegraphics[width=0.75\textwidth]{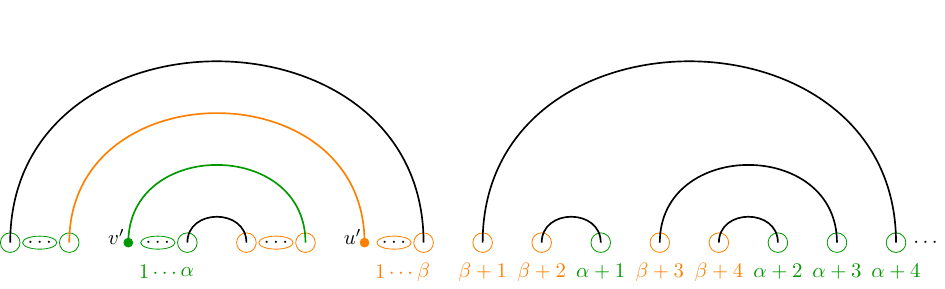}
    \caption{Schematic showing the values of $\trip_i(H)(v')$ (circled in green) and $\trip_i(H)(u')$ (circled in orange) as in the proof of \Cref{thm:fraser-implies-fully-reduced}. Here $i$ is written below the circled openers or closers in the appropriate color.}
    \label{fig:arches}
\end{figure}

\begin{proposition}\label{prop:contracted}
    Fraser graphs are contracted.
\end{proposition}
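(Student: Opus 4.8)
The plan is to read the internal structure of a Fraser graph directly off the construction of $\mathfrak{W}$ in \Cref{sec:Fraser_map} and check that no left-hand configuration of \Cref{fig:contraction-hourglass-moves} can embed. Recall that the internal vertices of $\mathcal{F}(T)$ come in exactly two kinds. For each triangle $\Delta$ of the triangulation $\mathfrak{t}$ there is a black vertex $b(\Delta)$, which by construction is trivalent, i.e.\ has simple degree exactly $3$. For each vertex $\delta_j$ of the $s$-gon there is a white vertex $w(\delta_j)$, joined by weight-$1$ (simple) edges to the entire claw set $C_j$ and by hourglasses to the black vertices $b(\Delta)$ of the triangles incident to $\delta_j$; writing $t_j$ for the number of such triangles, $w(\delta_j)$ has simple degree $|C_j| + t_j$. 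The two facts that drive the argument are then: (a) no internal vertex of $\mathcal{F}(T)$ has simple degree $1$, since black internal vertices have simple degree $3$ and white internal vertices have simple degree $|C_j| + t_j \geq 2$ (as $|C_j|\geq 1$ and, for $s \geq 3$, $t_j \geq 1$); and (b) every bivalent internal vertex is a white vertex $w(\delta_j)$ with $|C_j| = t_j = 1$, and such a vertex has exactly one boundary neighbor and exactly one internal (black) neighbor, because $|C_j| \geq 1$ forces it to retain a boundary edge.

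With these facts in hand, I would go through the left-hand sides of the contraction moves and rule each out. Inspection of \Cref{fig:contraction-hourglass-moves} shows that every such move contracts an internal vertex of simple degree at most $2$ into an adjacent internal vertex of the same color; at the degenerate value $a = r$ the move instead removes two internal vertices joined by a full $r$-hourglass, each of simple degree $1$. The leaf-type and $a = r$ configurations are excluded by fact (a) (no internal vertex has simple degree $1$); equivalently, the $a=r$ case is excluded because $\mathcal{F}(T)$ has no isolated components by \Cref{thm:fraser-implies-fully-reduced}. The trivalent black vertices $b(\Delta)$ have simple degree $3$ and so are never the contracted vertex of any move. Thus the only candidate to which a contraction move could apply is a bivalent white vertex $w(\delta_j)$ with $|C_j| = t_j = 1$ from fact (b); but contracting it would require merging its two neighbors, and one of these is a boundary vertex, which cannot be merged with the internal black neighbor. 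Hence no contraction-move template embeds, and $\mathcal{F}(T)$ is contracted.

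I expect the one delicate point to be precisely this borderline case in fact (b): a white vertex arising from an \emph{ear} of the triangulation (so $t_j = 1$) whose claw set is a single boundary vertex ($|C_j| = 1$) genuinely is bivalent, so contractedness is \emph{not} simply the absence of bivalent internal vertices. The step to get right is matching the contraction-move templates of \Cref{fig:contraction-hourglass-moves}—which merge two internal same-colored vertices—against this configuration and confirming the mismatch forced by its boundary neighbor; everything else is a routine degree count read off from the construction of $\mathfrak{W}$.
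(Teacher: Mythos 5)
Your proof is correct and takes essentially the same approach as the paper, whose one-sentence argument invokes exactly the two facts you establish: every internal black vertex of a Fraser graph is trivalent, and every internal white vertex retains a neighbor on the boundary, so no contraction template from \Cref{fig:contraction-hourglass-moves} can embed. Your longer case analysis (the degenerate $a\in\{0,r\}$ configurations and the bivalent white vertices with $|C_j|=t_j=1$) is just a careful unpacking of that same observation.
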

\begin{proof}
    This is immediate from the fact that all  internal black vertices in a Fraser graph are trivalent, while all internal white vertices are adjacent to the boundary.
\end{proof}

\section{Square faces in hourglass plabic graphs}\label{sec:square_faces}
In this section, we study subgraphs and faces of hourglass plabic graphs. The results of this section hold more generally than in the Pl\"ucker degree $2$ setting, but use facts we have developed about the Pl\"ucker degree $2$ case. We will apply these general results to the Pl\"ucker degree $2$ setting in \Cref{sec:fr_is_fraser}.

\subsection{Subgraphs of hourglass plabic graphs}
We will need the following definition for our study of faces of fully reduced hourglass plabic graphs.

\begin{definition}\label{def:subHPG}
Let $G$ be an hourglass plabic graph and consider a Jordan curve $C$ immersed into the interior of the embedding disk. Assume that $C$ passes through no vertex of $G$. For each edge $e$ of $G$, assume further that $C$ meets $e$ at most once and that any intersection of $C$ and $e$ is transverse. In this case, we define the \emph{sub-hourglass plabic graph} $G|_C$ as follows. By the Jordan--Schoenflies Theorem, continuously deform the interior of $C$ to a disk while deforming $C$ to its boundary circle. The internal vertices and internal edges of $G|_C$ are the images under this deformation of the vertices and edges of $G$ that lie completely in the interior of $C$. The colors of these internal vertices and the multiplicities of these internal edges in $G|_C$ are as the corresponding colors and multiplicities in $G$. 

Finally, for each $m$-hourglass edge $e$ of $G$ having nonempty intersection with $C$, let $v$ be the endpoint of $e$ inside $C$ and let $w$ be the endpoint outside $C$. Now, replace $e$ in $G|_C$ with $m$ consecutive edges of multiplicity $1$ that are incident to the image of $v$ and with their other endpoints being $m$ distinct new consecutive vertices of degree $1$ on the bounding circle $C$; these new boundary vertices are each given the color of $w$. We call these boundary vertices and their incident edges the \emph{claw corresponding to} $e$. The boundary vertices created in this fashion are the only boundary vertices of $G|_C$.
\end{definition}

The class of fully reduced hourglass plabic graphs is important because such objects index our bases in, for example, \Cref{thm:basis} and \cite[Thm.~A]{Gaetz.Pechenik.Pfannerer.Striker.Swanson:4row}. We next observe that the ``fully reduced'' property is hereditary, a useful feature when working with the associated diagrammatic calculus.

\begin{proposition}\label{prop:subHPG}
    If $G$ is a fully reduced hourglass plabic graph and $C$ is a Jordan curve as in \Cref{def:subHPG}, then the sub-hourglass plabic graph $G|_C$ is also fully reduced.
\end{proposition}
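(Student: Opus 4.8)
The plan is to show that the trip segments of $G|_C$ are exactly the maximal portions of trip segments of $G$ lying in the interior of $C$, and then to transfer each of the four defining conditions of \Cref{def:fully_reduced_2} across this correspondence. The correspondence itself follows from the locality of the trip rules: the interior of $C$ in $G|_C$ is, as an embedded bipartite graph with edge multiplicities and vertex colors, identical to the part of $G$ inside $C$, and the ``$i$-th leftmost/rightmost turn'' rule depends only on this local data. The one point needing care is what happens at a cut hourglass edge $e$, where its endpoint $v$ lies inside $C$ and $w$ outside; here \Cref{lem:tunnel} guarantees that a trip leaving $v$ along the counterclockwise-$j$th strand of $e$ is precisely the one that, in $G|_C$, terminates at the $j$th boundary vertex of the claw corresponding to $e$. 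Thus every trip of $G|_C$ lifts to a trip of $G$ by re-gluing the cut strands, and conversely each trip of $G$ restricts, between consecutive crossings of $C$, to a (possibly empty) collection of trips of $G|_C$.

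First I would dispatch the two easier conditions. For isolated components: a connected component $K$ of $G|_C$ with no boundary vertex has no edge of $G$ crossing $C$ incident to it, so $K$ is a union of connected components of $G$ contained in the interior of $C$, none of which can contain a boundary vertex of $G$; this contradicts $G$ having no isolated components. For self-intersections: since $G$ is fully reduced, every trip of $G$ reaches the boundary circle of the disk, which lies outside $C$, so any trip of $G$ entering the interior of $C$ must leave it; hence every trip of $G|_C$ reaches a claw boundary vertex and is a genuine truncation of a $G$-trip. A self-intersection of a trip of $G|_C$ occurs at an interior vertex and so is visible verbatim on the lifted $G$-trip, contradicting that $G$ has no self-intersections.

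The heart of the argument is the absence of bad double crossings. Suppose $\ell_1'$ and $\ell_2'$ are a $\trip_i$-segment and a $\trip_i$- or $\trip_{i+1}$-segment of $G|_C$ with an oriented double crossing, i.e.\ two essential intersections occurring in the same forward order along both segments. Lift $\ell_1', \ell_2'$ to trip segments $\ell_1, \ell_2$ of $G$. Any essential intersection of the double crossing occurring at an interior vertex of $G|_C$ is, by locality, an essential intersection of $\ell_1, \ell_2$ in $G$ at the same vertex, with the same crossing (as opposed to bouncing) behaviour. The remaining possibility, per \Cref{def:intersections}, is an essential intersection arising because $\ell_1'$ and $\ell_2'$ share a claw boundary vertex $b'$ at a cut hourglass $e$, recorded at the internal vertex $v$ incident to $b'$. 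I would show such an intersection lifts to a genuine essential intersection of $\ell_1, \ell_2$ in $G$ at $v$: in $G$ both lifted segments traverse the strand of $e$ through $v$, and contracting that strand reproduces at $v$ the identical local configuration of incoming and outgoing directions that the shared-boundary-vertex rule encodes in $G|_C$, so the two segments cross at $v$ in $G$ exactly when the rule declares an essential intersection in $G|_C$. Consequently both essential intersections persist, in the same forward order, along $\ell_1, \ell_2$ in $G$, producing a bad double crossing there and contradicting that $G$ is fully reduced.

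The main obstacle is exactly this last point: matching the special ``shared boundary vertex'' essential intersections of $G|_C$ with honest essential intersections of $G$. The difficulty is that in $G|_C$ a lifted segment terminates at the cut strand, whereas in $G$ it continues through the hourglass to $w$, so one cannot simply invoke ``same vertex, same picture.'' Resolving it requires a careful local analysis at each vertex $v$ adjacent to a cut hourglass, organised by which of the two segments starts versus ends at $b'$ and by the relative types $i$ and $i\pm 1$, together with repeated use of \Cref{lem:tunnel} to control the strand on which each segment enters and leaves $e$; the topological bookkeeping of \Cref{lem:fully-reduced-crossings} is the natural tool for confirming that the crossing parity is preserved under re-gluing.
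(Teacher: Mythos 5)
Your proposal is correct and takes essentially the same approach as the paper's own proof: identify each trip segment of $G|_C$ with a contiguous piece of a trip segment of $G$ (matching each claw boundary edge with the appropriate strand of the cut hourglass), then transfer each defining condition of full reducedness across this correspondence. You are in fact more thorough than the paper, which handles bad double crossings in a single sentence and does not spell out the isolated-component condition or the lifting of shared-boundary-vertex essential intersections that you rightly flag as the delicate point; that lifting does go through, exactly as you suggest, by using \Cref{lem:tunnel} to see that two lifted segments traversing the same cut hourglass enter and exit at matching relative positions and hence genuinely cross there in $G$.
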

\begin{proof}
    First observe that each $\trip_i$-segment of $G|_C$ may be identified with a contiguous piece of a $\trip_i$-segment of $G$. (Here we identify a boundary edge of the claw corresponding to an hourglass edge $e$ with the appropriate strand of $e$.) 
    
    Since $G$ is fully reduced, no $\trip_i$-segment of $G$ has self-intersections and hence the same is true of the truncated $\trip_i$-segments of $G|_C$. 
    Similarly, each bad double crossing in $G|_C$ gives rise to a corresponding bad double crossing in $G$. Hence, the full reducedness of $G$ implies that of $G|_C$.
\end{proof}

\subsection{Fully reduced square faces}
In this subsection, we study square faces of hourglass plabic graphs. We characterize when they are fully reduced in \Cref{thm:square-fully-reduced} and show that square moves preserve trip permutations in \Cref{thm:square-move-preserves-trip-bullet}. We will need these facts in the Pl\"ucker degree two case in \Cref{sec:fr_is_fraser}. Note that our proofs rely on \Cref{thm:trip=prom,thm:fraser-implies-fully-reduced}, but the results in this section hold for general $r$-hourglass plabic graphs.

\begin{definition}
    The edges of a plabic graph decompose the embedding disk into regions. If such a region is not adjacent to the boundary of the disk, we call it a \emph{face}. A \emph{face} of an hourglass plabic graph is a face of the underlying plabic graph. 
\end{definition}

We emphasize that the lacunae between the strands of an hourglass edge are not considered faces.
Note that, by bipartiteness, a face of an hourglass plabic graph without isolated components must be the interior of an even cycle. We call a face a \emph{square} if this cycle has exactly $4$ edges; squares play a special role in the theory of hourglass plabic graphs. Throughout, we assume for simplicity that there are no isolated components.

\begin{definition}
    Let $F$ be a face of an hourglass plabic graph $G$ bounded by vertices $v_1, \dots, v_{2f}$ and edges $e_1,\ldots,e_{2f}$. Define $m(F) \coloneqq \sum_{i=1}^{2f} m(e_i)$. Drawing an immersed Jordan curve $C$ that contains all the vertices and edges of $F$ in its interior, intersects the other edges incident to $v_1, \dots, v_{2f}$ transversally, and does not intersect any other edges or vertices of $G$. We write $\widetilde{F}$ for the hourglass plabic graph $G|_C$ obtained by restricting $G$ as in \Cref{def:subHPG}. Note that $v_1, \dots, v_{2f}$ and $e_1,\ldots,e_{2f}$ are the only internal vertices and edges of $\widetilde{F}$. We say that the face $F$ of $G$ is \emph{fully reduced} if the hourglass plabic graph $\widetilde{F}$ is.
\end{definition}

By \Cref{prop:subHPG}, if $G$ is a fully reduced hourglass plabic graph, then each face $F$ of $G$ is also fully reduced.
The following theorem shows what kinds of squares can appear in fully reduced hourglass plabic graphs.
\begin{theorem}
\label{thm:square-fully-reduced}
    A square $F$ in an $r$-hourglass plabic graph $G$ is fully reduced if and only if $m(F) \leq r$.
\end{theorem}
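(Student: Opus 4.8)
The plan is to treat the two implications separately, proving the easier direction $m(F)\le r \Rightarrow \widetilde{F}$ fully reduced by an explicit realization, and the harder converse by a direct analysis of trips inside the square. Throughout, I write the square face as a $4$-cycle on vertices $v_1,v_2,v_3,v_4$ (two white, two black, alternating by bipartiteness) with edges $e_1=v_1v_2,\ldots,e_4=v_4v_1$ of multiplicities $m_1,\ldots,m_4$, so that $m(F)=m_1+m_2+m_3+m_4$. Since every internal vertex has degree $r$, the claw of $\widetilde{F}$ at $v_k$ has size $r-m_{k-1}-m_k$ (indices mod $4$); validity of $\widetilde{F}$ as an hourglass plabic graph already forces each adjacent sum $m_{k-1}+m_k$ to be at most $r$.

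For the implication $m(F)\le r\Rightarrow\widetilde{F}$ fully reduced, I would realize $\widetilde{F}$ as a square face of an explicit Fraser graph and invoke heredity. Concretely, take the weighted dissection of a quadrilateral with cyclic vertices $\delta_1,\delta_2,\delta_3,\delta_4$, boundary weights $m_2,m_1,m_4,m_3$ on the four sides, and one diagonal $\delta_1\delta_3$ of weight $r-m(F)\ge 0$, splitting it into triangles $\Delta_1,\Delta_2$. The total weight is $r$, so the associated Fraser graph $G$ has Pl\"ucker degree two. A direct application of Fraser's weighting rule (the recipe for $\mathfrak{W}$ in \Cref{sec:Fraser_map}) shows that the four edges of the square face $w(\delta_1),b(\Delta_1),w(\delta_3),b(\Delta_2)$ carry exactly the multiplicities $m_1,m_2,m_3,m_4$, and that the claws obtained by cutting this face out match those of $\widetilde{F}$ in size and color. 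Hence $\widetilde{F}\cong G|_C$ for a suitable Jordan curve $C$ as in \Cref{def:subHPG}. Since $G$ is fully reduced by \Cref{thm:fraser-implies-fully-reduced} and full reducedness is hereditary by \Cref{prop:subHPG}, it follows that $\widetilde{F}$ is fully reduced.

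For the converse I would argue the contrapositive: assuming $m(F)>r$, I would exhibit either a round trip (a $\trip_i$-segment that never reaches the boundary, hence self-intersects) or an oriented double crossing between two $\trip_i$-segments. The main tool is the Tunneling \Cref{lem:tunnel}, which rigidly propagates the strand index through each hourglass, so that a trip's behavior inside $\widetilde{F}$ is governed entirely by the turn rule at the four vertices. Tabulating, at each vertex the turn index determines whether an incoming trip continues around the square, exits into the claw, or returns along the edge it came in on; a full loop around the square is a round trip. When no index admits a full loop, the surplus $m(F)-r>0$ instead forces two segments of some $\trip_i$ to run parallel through two consecutive hourglass edges of the square, crossing once inside each (by the hourglass twist) and hence crossing essentially twice in immediate succession along both segments. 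This is an oriented double crossing between two $\trip_i$-segments, so $\widetilde{F}$ is not fully reduced; comparison with the permitted crossing patterns of \Cref{lem:fully-reduced-crossings} makes the contradiction precise.

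The routine parts are the bookkeeping of the turn rule at the four vertices and the verification of the weights in the Fraser realization. The main obstacle is the converse: one must show that the single numerical condition $m(F)>r$ always produces a bad configuration, uniformly across all valid multiplicity vectors $(m_1,\ldots,m_4)$, and in particular that the two qualitatively different failures---a round trip versus a parallel pair through consecutive hourglasses---together cover every case. I expect this to require a careful, if elementary, case analysis organized by which claws are empty (equivalently, which adjacent sums equal $r$) and by the relative sizes of the $m_k$.
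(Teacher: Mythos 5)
Your ``$m(F)\le r$'' direction is correct and is essentially the paper's own argument: realize the square as a face of the Fraser graph attached to a quadrilateral dissection whose diagonal has weight $r-m(F)$, then apply \Cref{thm:fraser-implies-fully-reduced} together with heredity of full reducedness (\Cref{prop:subHPG}). Your handling of the Jordan-curve restriction there is, if anything, slightly more careful than the paper's phrasing.

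The converse, however, has a genuine gap, and it is structural rather than a matter of unfinished case analysis. You propose to produce, when $m(F)>r$, either a round trip or an oriented double crossing between two segments of the \emph{same} $\trip_i$. But \Cref{def:fully_reduced_2} also forbids double crossings between a $\trip_i$- and a $\trip_{i+1}$-segment, and for some squares with $m(F)>r$ this mixed condition is the \emph{only} one that fails. Concretely, take $r=3$ and the square with all four multiplicities equal to $1$, so $m(F)=4>3$ and each claw has size one. The underlying plabic graph of $\widetilde{F}$ is the standard reduced plabic graph with one square face and four legs (trip permutation $i\mapsto i+2$, the top cell of $\mathrm{Gr}(2,4)$); being reduced, it has no round trips, no self-intersections, and no bad double crossings among $\trip_1$-segments, and since $\trip_2$-segments are exactly the reversals of $\trip_1$-segments, the same holds for those. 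So neither of your two failure modes occurs, and no argument restricted to same-index pairs can succeed. The paper's proof instead exhibits a mixed pair: the $\trip_{i+m(e_1)-1}$-segment and the $\trip_{i+m(e_1)}$-segment entering $v_1$ along the same claw strand $s_i$, one tunneling through $e_1$ and running along $e_4$ to $v_3$, the other tunneling through $e_2$ and running along $e_3$ to $v_3$; the inequality $m(F)\ge r+1$ is precisely what makes the two ranges of admissible $i$ overlap, and the two segments then have an essential intersection at $v_1$ (they share a boundary vertex, per \Cref{def:intersections}) followed by one at $v_3$, a bad double crossing of consecutive indices.

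There is also a definitional problem with your proposed mechanism even where parallel same-index pairs do exist. You claim that two $\trip_i$-segments running parallel through two consecutive hourglasses ``cross once inside each'' and hence form an oriented double crossing. But \Cref{def:intersections} computes intersections on the underlying graph $\widehat{G}$ after contracting every edge that both segments traverse; two segments sharing both hourglass edges of the corner therefore meet in at most \emph{one} intersection point (essential or not according to the cyclic order of their four free ends), never two. So the configuration you describe does not constitute an oriented double crossing in the sense required, independently of the counterexample above.
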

\begin{proof}
$(\Rightarrow)$
Suppose an $r$-hourglass plabic graph $G$ has a square $F$ with $m(F) > r$. We may assume $G = \widetilde{F}$. We identify a bad double crossing in $G$.

Let the edges of $F$ be $e_1, \dots, e_4$ in clockwise order. Let $v_i$ be the vertex of $F$ incident to $e_i$ and $e_{i+1}$, where we interpret $e_5$ as $e_1$.
Without loss of generality, suppose that $v_1$ is black with simple degree greater than $2$. Let $d \coloneqq r-m(e_1)-m(e_2)$, which is positive.

Let $s_1,\dots,s_d$ be the strands incident to $v_1$ that are not part of $e_1$ or $e_2$ in clockwise order around $v_1$. If $1 \leq i \leq m(e_4)$, then the $\trip_{i+m(e_1)-1}$-segment into $v_1$ along $s_i$ tunnels through the hourglass $e_1$ and travels along $e_4$ to $v_3$ (see \Cref{lem:tunnel}). Similarly, if $d-m(e_3)+1 \leq i \leq d$, then the $\trip_{i+m(e_1)}$-segment into $v_1$ along $s_i$ tunnels through the hourglass $e_2$ and travels along $e_3$ to $v_3$. Both of these conditions on $i$ may be satisfied simultaneously if $m(e_4) \geq d-m(e_3)+1$, which holds since
  \[ m(e_4) + m(e_3) - d - 1 = m(F) - (r+1) \geq 0. \]

We may suppose these $\trip$-segments do not have self-intersections, so they either exit the square at $v_3$ or they continue along the square and exit at $v_2$ or $v_4$, respectively. In any of these cases, they form a bad double crossing.

\medskip
\noindent
$(\Leftarrow)$
    By \Cref{thm:fraser-implies-fully-reduced}, all Fraser graphs are fully reduced, so by \Cref{prop:subHPG} any square appearing in a Fraser graph is fully reduced. Hence to prove this direction of the theorem, it suffices to realize the square $F$ as a subconfiguration of some Fraser graph.

    Let the square $F$ be oriented and indexed as on the left of \Cref{fig:square-move}, but with $m_1 + m_2 + m_3 + m_4 \leq r$. Set $\epsilon = r - (m_1 + m_2 + m_3 + m_4)$. Then one may check that $\widetilde{F}$ arises as a Fraser graph from the weighted triangulation
    \begin{center}
    \includegraphics[]{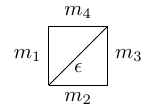}.
    \end{center}
    This triangulation arises in turn from a standard tableau of rectangular shape $r \times 2$, which we omit.
\end{proof}

\begin{remark}
    If a web $W$ contains a square face $F$ with $m(F)>r$, one may iteratively apply the \emph{square relation} \cite[Eq.~2.10]{Cautis-Kamnitzer-Morrison} to write $[W]_q$ as a sum of invariants of webs containing no such face. In the case $r=3$, \Cref{thm:square-fully-reduced} recovers Kuperberg's famous \emph{non-elliptic condition} \cite{Kuperberg} on the non-existence of square faces.
\end{remark}

\begin{remark}
    Fraser exhibits in \cite[Eq.~2.4]{Fraser-2-column} a plabic graph $G$ which gives rise to two invariants, neither of which is dual canonical. This $G$ can be interpreted as an hourglass plabic graph in two ways, and neither of these is fully reduced, since both have a square face violating the condition from \Cref{thm:square-fully-reduced}.
\end{remark}

\begin{theorem}
\label{thm:square-move-preserves-trip-bullet}
    For any $r$-hourglass plabic graph, the square move preserves $\trip_{\bullet}$.
\end{theorem}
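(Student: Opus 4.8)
The plan is to reduce the claim to a purely local statement about how trip segments are wired through a single square, and then to verify that local wiring inside a standard-type Fraser graph using the machinery of Sections~3 and~4. To localize, fix a square face $F$ of $G$ and surround it by a Jordan curve $C$ as in the construction of $\widetilde{F}$, so that the square move alters $G$ only inside $C$; the multiplicities of the edges crossing $C$, and hence the claws of $\widetilde{F}$, are preserved by the move. Using \Cref{lem:tunnel}, I would observe that the route of any $\trip_i$-segment through the square depends only on the four edge multiplicities $m_1,\dots,m_4$ of $F$ and on the claw sizes at the four corners, and not on how the outer edges are bundled into hourglasses nor on the rest of $G$. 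Consequently it suffices to prove that the boundary-to-boundary trip permutations of the local configuration are unchanged by the move, and this may be checked in any single ambient graph realizing the same local square data.

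Next I would choose a convenient ambient graph. By the construction in the proof of \Cref{thm:square-fully-reduced}, the square $F$, with $m(F)=m_1+\cdots+m_4=r$ and its prescribed corner claws, is realized as a subconfiguration of a standard-type Fraser graph $G_0=\mathcal{F}(T)$ arising from a weighted triangulation $\mathfrak{t}$. As illustrated in \Cref{fig:triflip}, performing the square move on $F$ corresponds to flipping the diagonal of the quadrilateral of $\mathfrak{t}$ containing $F$, producing a triangulation $\mathfrak{t}'$ of the same weighted dissection and hence the same tableau $T$; by \Cref{prop:Fraser_web_invariant} the resulting graph is again $\mathcal{F}(T)$. \Cref{thm:trip=prom} then yields $\trip_\bullet(G_0)=\prom_\bullet(T)=\trip_\bullet(G_0')$, where $G_0'$ denotes the graph after the move.

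Finally I would descend from this global equality to the local wiring. Since $G_0$ and $G_0'$ agree outside $C$, their exterior trip arcs are identical, and I would argue that the interior trip wiring through the square is uniquely recoverable from the global trip permutation together with the fixed exterior: if the local wiring differed, some segment would leave the square region on a different strand and, the exterior being unchanged, reach a different boundary endpoint, contradicting $\trip_\bullet(G_0)=\trip_\bullet(G_0')$. Hence the local wiring is preserved, and by the first step the square move preserves $\trip_\bullet(G)$ for an arbitrary $r$-hourglass plabic graph $G$.

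I expect the main obstacle to be making the descent argument fully rigorous: tracking the turning index $i$ consistently across $C$, so that a $\trip_i$-segment inside genuinely continues as a $\trip_i$-segment outside (this is exactly where \Cref{lem:tunnel} is essential, since the counterclockwise strand position at a crossing edge must be matched up), and verifying that equality of the global permutations together with the unchanged exterior pins down the interior wiring itself rather than merely its composition with the exterior. A secondary point to check carefully is the universality claim of the first step, namely that the local trip wiring depends only on $m_1,\dots,m_4$ and the corner claw sizes, so that the computation carried out in the Fraser realization $G_0$ transfers verbatim to the arbitrary graph $G$.
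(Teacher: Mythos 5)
Your proposal is correct and follows essentially the same route as the paper: realize the square via the construction in the proof of \Cref{thm:square-fully-reduced} inside a Fraser graph coming from a weighted quadrilateral, identify the square move with a flip of the weight-zero diagonal (so both sides arise from the same dissection and hence the same tableau), and invoke \Cref{thm:trip=prom} together with the locality of the trip rules. The only structural difference is that the paper sidesteps the global-to-local ``descent'' step you flag as the main obstacle, by applying \Cref{thm:trip=prom} directly to the restrictions $\widetilde{F}$ and $\widetilde{F}'$ themselves --- which are Fraser graphs arising from the same standard Young tableau --- so that the equality of local boundary-to-boundary wirings is immediate and only the easy local-to-global gluing direction is needed.
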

\begin{proof}
Let $G$ and $G'$ be hourglass plabic graphs related by a square move at faces $F$ and $F'$, respectively. By \Cref{thm:square-fully-reduced}, the restrictions $\widetilde{F}$ and $\widetilde{F}'$ are Fraser. Moreover, the proof of n \Cref{thm:square-fully-reduced} shows that they arise as Fraser hourglass plabic graphs from the same undissected square and hence from the same standard Young tableau. By \Cref{thm:trip=prom}, they therefore also have the same trip permutations. It follows that $G$ and $G'$ then also have the same trip permutations.
\end{proof}

\section{Fully reduced graphs are Fraser}
\label{sec:fr_is_fraser}
We now complete the characterization of contracted fully reduced hourglass plabic graphs of Pl\"{u}cker degree two by showing that they are exactly the Fraser graphs.

\begin{theorem}
\label{thm:fully-reduced-implies-fraser}
Let $G$ be a contracted, fully reduced hourglass plabic graph of Pl\"{u}cker degree two. Then $G$ is Fraser.
\end{theorem}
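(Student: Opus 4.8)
The plan is to establish a converse to the results of Sections~\ref{sec:fraser_is_fr} and~\ref{sec:square_faces}: given a contracted fully reduced $G$ of Pl\"ucker degree two, I want to produce a tableau $T \in \SYT(r\times 2)$ together with a weighted triangulation witnessing that $G = \mathcal{F}(T)$ up to square moves. The natural strategy is to reverse-engineer Fraser's construction. First I would use the structural constraints: $G$ has $2r$ black boundary vertices of degree one, is bipartite, and by \Cref{prop:contracted}-type reasoning its internal black vertices should be trivalent while internal white vertices carry the boundary claws. The key invariant to extract is $\trip_\bullet(G)$; by \Cref{thm:trip=prom} the corresponding Fraser graph would have to satisfy $\trip_\bullet(\mathcal{F}(T)) = \prom_\bullet(T)$, so $\trip_\bullet(G)$ determines a candidate $T$ via the promotion permutations (the $\trip_1$ permutation in particular should recover the noncrossing matching $\mathfrak{M}(T)$, reading off openers and closers using \Cref{prop:prom1}).

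The main argument I would run is an induction on the number of internal vertices, paralleling the ear-removal inductions of \Cref{thm:trip=prom} and \Cref{thm:fraser-implies-fully-reduced}, but now going in the reverse direction. The first substantive step is to show that any contracted fully reduced $G$ of Pl\"ucker degree two (that is not the disjoint union of two claws) must contain an \emph{ear} configuration $(A,B,C)$ as in \Cref{fig:claws-ears}: some trivalent black vertex $v$ adjacent to three white claw-centers, with the middle claw $B$ having no other edges. This is the combinatorial heart of the converse. I would argue this by analyzing the planar dual or the face structure: since all internal white vertices attach to the boundary via claws and all black vertices are trivalent, the contracted graph looks like a triangulation-type object, and a counting/extremality argument (e.g.\ an Euler-characteristic or leaf-of-a-tree argument on the black vertices) should force the existence of a black vertex two of whose neighbors are ``outermost'' claws bounding an ear. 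Once an ear is located, I remove it to obtain a smaller graph $H$; I must check $H$ is still contracted, fully reduced (using \Cref{prop:subHPG} or a direct trip-segment argument via \Cref{lem:trips-ossicles,lem:trips-G-H,cor:not-in-lobes-flaps}), and of Pl\"ucker degree two. By induction $H = \mathcal{F}(\mathcal{T}(H))$, and then I reattach the ear and verify that the resulting graph is $\mathcal{F}(T)$ for the tableau $T$ obtained by the inverse of ear-removal on $\mathcal{T}(H)$, invoking \Cref{lem:ear_removal} and the compatibility of ear-removal with the collapsing of triangles in \Cref{fig:collapse-triangle}.

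The hard part will be the first step: proving that a fully reduced contracted Pl\"ucker-degree-two graph whose combinatorial type is only constrained by the local conditions of \Cref{def:fully_reduced_2} must actually have the global tree/triangulation structure that guarantees an ear. A priori, fully reducedness is a condition about trip segments and double crossings, not directly about the graph being dual to a polygon triangulation, so I expect to need to first prove that such a $G$ is a tree of claws glued at trivalent black vertices --- equivalently, that its internal structure is dual to a dissection of a polygon. I would prove this by showing the white claw-centers and the polygon boundary they induce are forced by the trip data: using $\trip_1(G)$ to recover a noncrossing matching (via \Cref{prop:underlying-plabic-is-reduced} and Postnikov's reducedness, which pins down the underlying plabic graph from its trip permutation up to moves), and then showing the full $\trip_\bullet$ data together with full reducedness rigidifies the hourglass multiplicities to match a weighted triangulation. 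Here I expect to lean on \Cref{thm:square-move-preserves-trip-bullet} to handle the square-move ambiguity: since fully reduced graphs with the same underlying plabic graph differ by square moves preserving $\trip_\bullet$, it suffices to match $G$ to some Fraser graph up to square moves, which is exactly the flexibility \Cref{prop:Fraser_web_invariant} allows. Assembling these pieces --- extract the matching, realize $G$ as dual to a weighted dissection, triangulate, and invoke the induction --- completes the proof that $G$ is Fraser.
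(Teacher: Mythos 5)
Your high-level plan --- first force the claw/trivalent ``dual-to-a-dissection'' structure, then induct by locating and removing ears --- has the same shape as the paper's proof, and you correctly identify the structural step as the crux. But exactly there your proposal has a genuine gap, and the route you sketch for it would fail. You propose to pin $G$ down via $\trip_\bullet(G)$ and Postnikov's theorem that reduced plabic graphs with equal trip permutations are move-equivalent. That theorem sees only the underlying plabic graph $\widehat{G}$, and only up to \emph{all} plabic moves (including contraction/expansion); the hourglass multiplicities --- the data distinguishing $G$ from $\widehat{G}$ --- are invisible to it. The statement you then need, that full reducedness ``rigidifies the hourglass multiplicities'' so that $G$ is square-move equivalent to a Fraser graph, is essentially \Cref{thm:fully-reduced-implies-fraser} itself; neither \Cref{thm:square-move-preserves-trip-bullet} nor \Cref{prop:Fraser_web_invariant} supplies it, since both concern graphs already known to be square-move related or already known to be Fraser. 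The paper closes this hole with two lemmas whose content is absent from your plan: \Cref{lem:structure-of-underlying-plabic}, proved by induction along a saturated chain in Postnikov's circular Bruhat order (deleting edges one cover at a time and using that bipartite square moves preserve the structure), which gives that internal black vertices are trivalent and internal white vertices touch the boundary; and \Cref{lem:faces-are-squares}, an Euler-characteristic count (via \Cref{lem:white_minus_black,lem:w-b-equals-aexc}) showing all faces are squares and that the white vertices are cyclically ordered with consecutive ones sharing a unique black neighbor. Note also that you never invoke \Cref{thm:square-fully-reduced}: in the paper it is indispensable, since $m(F)\leq r$ is precisely what makes the diagonal weights $r-m(F)$ of the reconstructed triangulation nonnegative, i.e.\ what makes full reducedness (rather than mere reducedness of $\widehat{G}$) enter the argument.

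Two secondary gaps. First, your induction requires that removing an ear from a contracted fully reduced graph \emph{not yet known to be Fraser} again yields a contracted fully reduced graph of Pl\"ucker degree two, but the results you cite do not give this: \Cref{lem:trips-G-H}(a),(b) are proved using Fraser-ness through \Cref{thm:trip=prom} and \Cref{prop:prom1}, and \Cref{prop:subHPG} does not apply because ear removal is not a restriction $G|_C$ (the complement of the ear contains boundary vertices of $G$, so it is not enclosed by a Jordan curve in the interior of the disk as \Cref{def:subHPG} requires); you would need a fresh argument that self-intersections and bad double crossings in the ear-removed graph lift to $G$. Second, your induction is silent on the disconnected case, which has no ears; the paper handles it separately via \Cref{prop:plucker-degree-one-implies-star}, showing each Pl\"ucker-degree-one component is a star. (A small misstatement along the way: $\trip_1(G)$ is not the matching $\mathfrak{M}(T)$ --- for Fraser graphs $\trip_1=\prom_1(T)$, which is generally not an involution; the matching is read off from the whole tuple $\trip_\bullet(G)$ via \Cref{prop:prom1}.)
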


We refer the reader to \cite{Fomin.Williams.Zelevinsky, Postnikov-arxiv} for background on plabic graphs.
We say a plabic graph is \emph{weakly bipartite} if all pairs of adjacent internal vertices have opposite colors. The set $\aexc(\pi)$ of \emph{antiexcedances} of a permutation $\pi$ is $\{i \mid \pi^{-1}(i)> i\}$.
We need the following fact about plabic graphs.

\begin{lemma}
\label{lem:structure-of-underlying-plabic}
       Let $H$ be a contracted, weakly bipartite, reduced plabic graph with $\left|\aexc(\trip(H))\right|=2$. Then all internal black vertices have degree 3 and all internal white vertices are adjacent to a boundary vertex.
\end{lemma}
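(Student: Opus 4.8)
The plan is to reduce the statement to a linear counting identity extracted from the decorated permutation, and then to use the reducedness hypothesis to rigidify the resulting structure. Throughout, write $k \coloneqq |\aexc(\trip(H))| = 2$, let $V_B, V_W$ be the numbers of internal black and white vertices, let $S_B, S_W$ be their total degrees, and let $n_B$ (resp.\ $n_W$) be the number of boundary vertices whose unique neighbor is black (resp.\ white), so $n = n_B + n_W$. Weak bipartiteness guarantees every internal edge is black--white, whence $S_B = n_B + E_{\mathrm{int}}$ and $S_W = n_W + E_{\mathrm{int}}$, where $E_{\mathrm{int}}$ counts internal edges.

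First I would interpret $k$ through a perfect orientation of $H$ (which exists by \cite{Postnikov-arxiv}), choosing the one in which each internal black vertex has out-degree $1$ and each internal white vertex has in-degree $1$. The boundary sources of this orientation are exactly the antiexcedances of $\trip(H)$, so there are $k=2$ of them. Equating total tails and total heads (each internal black contributes one tail, each internal white one head, boundary vertices one of each according to source/sink) yields the clean identity $k = n_B + V_W - V_B$, i.e.\ $n_B + V_W - V_B = 2$; the color-reversal $k \mapsto n-k$ gives the equivalent $n_W + V_B - V_W = n-2$, which I expect to be convenient when arguing from the white side. Separately, capping the embedding disk to a sphere and applying Euler's formula identifies $\#\{\text{faces of } H\} - 1$ with $E - V_B - V_W$, which by \cite{Postnikov-arxiv} is the dimension $d$ of the positroid cell of $H$; since $k=2$ we have $d \le 2(n-2)$.

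The next ingredient is the degree/face constraints from the remaining hypotheses: because $H$ is contracted, every internal vertex has degree $\ge 3$, so $S_B \ge 3V_B$ and $S_W \ge 3V_W$; because $H$ is bipartite and reduced, every internal face is an even cycle of length $\ge 4$. The target conclusions are precisely the \emph{saturation} statements $S_B = 3V_B$ (all black vertices trivalent) and ``every white vertex meets the boundary''. The plan is to show that these must hold by feeding the identity $n_B + V_W - V_B = 2$ into the face/degree bounds; the model output to keep in mind is that the desired graphs are exactly the duals of triangulations of a polygon, with triangles $\leftrightarrow$ trivalent black vertices and polygon vertices $\leftrightarrow$ boundary-adjacent white vertices (the single square of \Cref{thm:square-fully-reduced} being the $\Gr(2,4)$ prototype).

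The crux, which I expect to be the main obstacle, is that the counting identity alone does not force the structure: one must rule out an internal white vertex all of whose neighbors are black, and a black vertex of degree $\ge 4$, and here the reducedness hypotheses must enter essentially (mere face-counting is insufficient, since a higher-valence black vertex changes $k$ only in concert with the global strand geometry). I would prove this rigidity by analyzing the two trips whose sources realize the two antiexcedances: drawn as strands they cut the disk into regions, and the absence of self-intersections and of bad double crossings — exactly the configurations controlled in \Cref{lem:fully-reduced-crossings} — should force every remaining strand to run boundary-parallel, collapsing $H$ onto a triangulation dual. A cleaner alternative is induction: use the identity to locate a boundary ``corner'' (two consecutive boundary vertices feeding a common trivalent black vertex, an ear in the sense of \Cref{sec:trip-prom}), peel it off to obtain a smaller contracted weakly bipartite reduced graph still with $|\aexc| = 2$, and apply the inductive hypothesis, with base cases the disjoint union of two white claws ($V_B = 0$, forcing $V_W = 2$) and the single square. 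Either way, the subsidiary technical points to dispatch are disconnected $H$ (whose components are claws or lower-$k$ pieces whose antiexcedances sum to $2$) and potential boundary lollipops, which must be shown either not to arise or to be consistent with the stated conclusion.
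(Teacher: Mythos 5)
Your counting setup is sound: the identity $\#\{\text{boundary sources}\} = n_B + V_W - V_B$ from a perfect orientation is the correct generalization of \Cref{lem:w-b-equals-aexc} to the weakly bipartite setting, and you are right that it alone cannot prove the lemma. But that is exactly where the proposal stops being a proof. The entire content of the statement is the rigidity step, and you offer two candidate strategies for it without executing either. The strand-analysis route rests on the unproven assertion that the two antiexcedance trips ``should force every remaining strand to run boundary-parallel''; nothing in the proposal makes this precise, and note that \Cref{lem:fully-reduced-crossings} cannot be invoked here anyway --- it is a statement about \emph{fully reduced hourglass} plabic graphs, whereas $H$ is only assumed to be a reduced plabic graph (only $\trip_1$-type crossing control from Postnikov is available). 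The induction route has a circularity problem: the existence of a peelable boundary ``corner''/ear in an arbitrary contracted, weakly bipartite, reduced plabic graph with $\left|\aexc\right|=2$ is itself a structural statement about the boundary of $H$ of essentially the same difficulty as the lemma; you cannot locate the ear from the counting identity alone, and you also do not verify that peeling preserves reducedness, weak bipartiteness, contractedness, and the condition $\left|\aexc\right|=2$.

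For comparison, the paper avoids direct structural analysis entirely by a transport-of-structure argument through Postnikov's theory: it exhibits one explicit graph (\Cref{fig:flower}) with trip permutation $i \mapsto i+2$, the top element of circular Bruhat order, having the desired properties; it then walks down a saturated chain to $\trip(H)$ using edge deletion \cite[Thm.~17.8]{Postnikov-arxiv}, repairing degree-$2$ vertices at each step so the properties persist, producing \emph{some} graph $H_0$ with $\trip(H_0)=\trip(H)$ and the desired structure; finally \cite[Thm.~13.4]{Postnikov-arxiv} gives move-equivalence of $H$ and $H_0$, and one checks that bipartite square moves (\Cref{fig:contracted-sq-move}) preserve the two properties. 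If you want to salvage your approach, the honest assessment is that you would need to prove the ear-existence statement from scratch, and the cleanest known tools for doing so are precisely the Postnikov theorems the paper leans on --- at which point the paper's argument is shorter.
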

\begin{proof}
    Let $\pi: i \mapsto i+2 \mod n$ denote the permutation with two antiexcedances which is the unique maximum element under \emph{circular Bruhat order} (see \cite[\S 17]{Postnikov-arxiv}). \Cref{fig:flower} gives a contracted, weakly bipartite, reduced plabic graph $G$ with trip permutation $\pi$ which has the desired properties.

\begin{figure}[htb]
    \centering
    \includegraphics[scale=0.7]{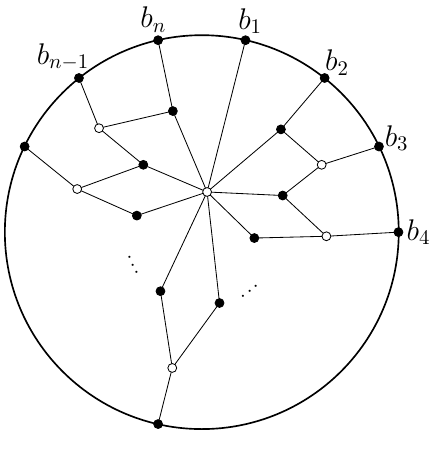}
    \caption{The reduced plabic graph with trip permutation $i \mapsto i+2 \bmod n$ used in the proof of \Cref{lem:structure-of-underlying-plabic}.}
    \label{fig:flower}
\end{figure}

    Now let $\sigma=\trip(H)$, and consider a saturated chain $\sigma = \sigma_0 \lessdot \sigma_1 \lessdot \cdots \lessdot \sigma_{y} = \pi$ in circular Bruhat order. Suppose for $j \in \{1, \dots, y\}$ that we have a contracted, weakly bipartite, reduced plabic graph $H_{j}$ for $\sigma_{j}$ having all internal black vertices of degree 3 and all internal white vertices adjacent to a boundary vertex. Then we can obtain $H_{j-1}$ for $\sigma_{j-1}$ having these same properties, as follows: since $\sigma_{j-1} \lessdot \sigma_j$, there exists an internal edge of $H_{j}$ which can be deleted, resulting in a reduced plabic graph $H'_{j-1}$ for $\sigma_{j-1}$ (see \cite[Thm.~17.8]{Postnikov-arxiv}). Let $v_b$ and $v_w$ be the black and white endpoints of this edge, respectively. Since $v_b$ has degree 3 in $H_j$, it has degree 2 in $H'_{j-1}$, and so we may remove it and contract its two neighboring white vertices. If $v_w$ also has degree 2 in $H'_{j-1}$, we may remove it; since one of its neighboring vertices is a boundary vertex, no contraction is necessary in order to maintain weak bipartiteness. Call the resulting plabic graph $H_{j-1}$.

    Let $H_0$ be the plabic graph for $\sigma$ obtained by iterating the above construction, beginning with $H_{y}=G$. Since $\trip(H_0)=\trip(H)$, and since both are reduced, by \cite[Thm.~13.4]{Postnikov-arxiv} $H_0$ and $H$ are move-equivalent. Since both are contracted and weakly bipartite, they are in fact connected by a sequence of \emph{bipartite square moves} (see \Cref{fig:contracted-sq-move}). These moves preserve the properties of all internal black vertices having degree 3 and all white vertices being incident to the boundary. Thus $H$, like $H_0$, has these properties.
\end{proof}

\begin{figure}[hbt]
    \centering
    \includegraphics[scale=1]{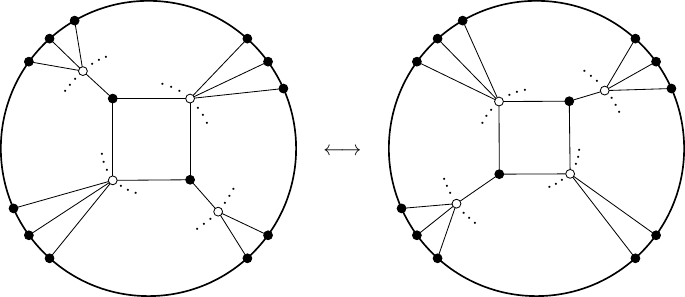}
    \caption{A \emph{bipartite square move} of a plabic graph, obtained by composing the square move of \Cref{fig:square-move} with contraction moves. All adjacencies of the black vertices are shown in the figure, but the white vertices may be adjacent to additional vertices.}
    \label{fig:contracted-sq-move}
\end{figure}

\begin{lemma}\label{lem:white_minus_black}
    Let $G$ be an $r$-hourglass plabic graph of Pl\"ucker degree $p$ with $w$ internal white vertices and $b$ internal black vertices.
    Then $w-b=p$.
\end{lemma}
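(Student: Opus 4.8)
The plan is to prove this by a weighted handshake argument, exploiting bipartiteness together with the degree constraints of standard type. Recall that $G$ is bipartite with a proper black--white coloring, so every edge has one black and one white endpoint; moreover, each $m$-hourglass edge contributes exactly $m$ to the degree (counted with multiplicity) of each of its two endpoints. The whole content of the lemma is a careful bookkeeping of these contributions, separated by color.

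First I would compute the total multiplicity-weighted degree $\sum_{e} m(e)$ in two ways, grouping the contributions according to vertex color. Since each edge contributes its multiplicity once to its white endpoint and once to its black endpoint, summing the degrees $\deg(v)$ over all white vertices $v$ and over all black vertices $v$ both yield $\sum_{e} m(e)$. In particular, these two colorwise sums are equal.

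Next I would evaluate each sum using the standard-type hypotheses. In standard type all boundary vertices are black of degree one, so every white vertex is internal and hence of degree $r$; the white side contributes $rw$. On the black side, the $b$ internal black vertices each contribute $r$ and the $n$ boundary vertices each contribute $1$, giving $rb + n$, where $n$ denotes the number of boundary vertices. Equating the two totals gives the single relation $rw = rb + n$.

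Finally, since the Pl\"ucker degree is $p = n/r$, dividing $rw = rb + n$ by $r$ yields $w - b = n/r = p$, as desired; note also that this shows $n/r$ is the integer $w-b$, which is precisely the integrality promised when Pl\"ucker degree was defined. I do not expect a genuine obstacle here: the only point requiring care is that the boundary vertices are black of degree one, so they enter the black total but not the white total. This is exactly the feature of standard type that breaks the symmetry between colors and produces the term $n$ on one side only.
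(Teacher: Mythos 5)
Your proof is correct and is essentially the paper's own argument: the paper also equates the colorwise degree sums ($rw$ for white versus $rp+rb$ for black, using bipartiteness and the standard-type hypotheses) and reads off $w-b=p$. Your write-up just spells out the bookkeeping more explicitly, including the observation that this establishes the integrality of the Pl\"ucker degree claimed in its definition.
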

\begin{proof}
By bipartiteness, the total degree $rw$ of the white vertices must equal the total degree $rp+rb$ of the black vertices. 
\end{proof}

\begin{lemma}\label{lem:w-b-equals-aexc}
    Let $H$ be a bipartite reduced plabic graph with $w$ white and $b$ black internal vertices, then $\left|\aexc(\trip(H))\right|=w-b$.
\end{lemma}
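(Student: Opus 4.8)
The plan is to compute $\left|\aexc(\trip(H))\right|$ by introducing a \emph{perfect orientation} and double-counting directed edges. Recall from \cite{Postnikov-arxiv} that every reduced plabic graph $H$ admits a perfect orientation $\mathcal{O}$, in which each internal black vertex has exactly one outgoing edge (hence in-degree $\deg-1$) and each internal white vertex has exactly one incoming edge (hence out-degree $\deg-1$). The fundamental fact I would import is that the number of boundary \emph{sources} of $\mathcal{O}$ is independent of the chosen orientation and equals $\left|\aexc(\trip(H))\right|$, where a white leaf attached to the boundary (a fixed point of $\trip(H)$ of the appropriate decoration) is counted as an antiexcedance. Since $H$ is reduced and has no isolated components, the only fixed points of $\trip(H)$ arise from such white leaves, exactly as in the proof of \Cref{prop:underlying-plabic-is-reduced}, so the decorated count is unambiguous.

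With this in hand, fix a perfect orientation $\mathcal{O}$ and let $E$ be the number of edges, $D_w$ (resp.\ $D_b$) the total degree of the internal white (resp.\ black) vertices, and $T$ the number of boundary sources. Summing out-degrees over all vertices gives
\[
E = \underbrace{b}_{\text{black internal}} + \underbrace{(D_w - w)}_{\text{white internal}} + \underbrace{T}_{\text{boundary}},
\]
since each black internal vertex has out-degree $1$, each white internal vertex has out-degree $\deg-1$, and each boundary source contributes out-degree $1$ while boundary sinks contribute $0$. Here I would use bipartiteness together with the standing convention that the boundary vertices are black (standard type): every edge joins a white and a black vertex, every white vertex is internal, and the $n$ boundary vertices have degree $1$, so $D_w = E$ and $D_b = E - n$. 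Substituting $D_w = E$ into the displayed identity yields $T = w - b$, and therefore $\left|\aexc(\trip(H))\right| = T = w-b$, as claimed. (Symmetrically, counting in-degrees shows the number of boundary sinks is $n-(w-b)$, which is a useful consistency check.)

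The main obstacle, and the place I would be most careful, is the bookkeeping of conventions rather than any deep difficulty. One must pin down the orientation convention so that it is the \emph{source} count (not the sink count) that matches $\left|\aexc(\trip(H))\right|$; reversing all edge orientations interchanges the two and would give $n-(w-b)$ instead. Equally essential is the hypothesis that all boundary vertices are black: the bipartite handshake $D_w = E$ relies on no boundary vertex being white, and indeed a single black internal bigon attached to white boundary vertices already has $\left|\aexc\right| = 1 \neq w-b = -1$, showing the coloring hypothesis cannot be dropped. Finally, the decoration of white-leaf fixed points as antiexcedances is exactly what reconciles the literal definition $\aexc(\pi) = \{i \mid \pi^{-1}(i) > i\}$ with the coloop contribution to $w - b$, so I would state this carefully up front.
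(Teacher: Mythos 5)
Your proof is correct, and it is essentially the paper's own argument in a different guise: the paper uses Postnikov's almost perfect matchings where you use perfect orientations, but on a bipartite graph with black boundary these are in canonical bijection (matched edges $\leftrightarrow$ edges oriented black-to-white, matched boundary vertices $\leftrightarrow$ sources), so both proofs amount to the same bipartite double count combined with the same two imported facts from \cite{Postnikov-arxiv} (existence via Lem.~11.10 and Thm.~12.7, and the \S 16 identification of the boundary invariant with $\left|\aexc(\trip(H))\right|$). Your extra care about the black-boundary hypothesis and about decorating white-leaf fixed points as antiexcedances is warranted---the paper leaves both implicit, and your bigon example shows the coloring hypothesis is genuinely needed---but it does not change the substance of the approach.
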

\begin{proof}
    An \emph{almost perfect matching} of $H$ is a subset $M$ of the edges such that all internal vertices, and some number $k$ of the boundary vertices, are incident to exactly one edge from $M$.
    Since $H$ is reduced, it admits an almost perfect matching (called a \emph{partial matching} in \cite{Postnikov-arxiv}) by \cite[Lem.~11.10 \& Thm.~12.7]{Postnikov-arxiv}. Since $H$ is bipartite, we must have $w-b=k$ for any such matching.
    On the other hand, \cite[\S 16]{Postnikov-arxiv} implies that $\left|\aexc(\trip(H))\right|=k$.
\end{proof}

\begin{lemma}
\label{lem:faces-are-squares}
    Let $G$ be a connected, contracted, fully reduced hourglass plabic graph of Pl\"{u}cker degree two. Then all internal faces of $G$ are squares. Furthermore, the set of boundary vertices incident to each internal white vertex is cyclically consecutive. Under the induced cyclic order on the internal white vertices, any consecutive pair of such vertices share a unique neighbor.
\end{lemma}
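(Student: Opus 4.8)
The plan is to leverage the structural result about the underlying plabic graph $\widehat{G}$ obtained from Lemma \ref{lem:structure-of-underlying-plabic}, and then bootstrap the conclusions about faces and white-vertex neighborhoods from there. First I would verify that the hypotheses of Lemma \ref{lem:structure-of-underlying-plabic} apply to $\widehat{G}$. By Proposition \ref{prop:underlying-plabic-is-reduced}, since $G$ is fully reduced, $\widehat{G}$ is reduced. It is weakly bipartite by \Cref{def:hourglass-plabic-graph} (the bipartite structure of $G$ descends to $\widehat{G}$), and it is contracted because $G$ is. Finally, since $G$ has Pl\"ucker degree two, Lemma \ref{lem:white_minus_black} gives $w - b = 2$ for the internal white and black vertex counts, and then Lemma \ref{lem:w-b-equals-aexc} yields $|\aexc(\trip(\widehat{G}))| = w - b = 2$. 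Thus Lemma \ref{lem:structure-of-underlying-plabic} applies and tells us that every internal black vertex of $\widehat{G}$ has degree $3$ and every internal white vertex is adjacent to a boundary vertex.

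Next I would deduce that all internal faces are squares. In a bipartite graph, every face is bounded by an even cycle, so I must rule out cycles of length $\geq 6$. The key constraint is that black vertices have simple degree exactly $3$. Consider an internal face $F$ bounded by a cycle alternating between black and white vertices. Since each internal white vertex is adjacent to a boundary vertex, the idea is that a white vertex on the face cycle has at most two of its incident edges lying on the face boundary, with remaining edge-budget forced toward the boundary; combined with each black vertex having only three incident edges, a counting or topological argument should force the bounding cycle to have length $4$. I would make this precise by analyzing, for a hypothetical face of length $2f \geq 6$, the trip segments that traverse its boundary: using \Cref{def:trip_perms} and the tunneling behavior of \Cref{lem:tunnel}, a long face boundary would force either a self-intersection or a bad double crossing among $\trip_i$- and $\trip_{i\pm1}$-segments running along the cycle, contradicting that $G$ is fully reduced (via \Cref{thm:square-fully-reduced} applied to the relevant restricted subgraph, or directly via \Cref{lem:fully-reduced-crossings}).

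For the statement that the boundary vertices incident to each internal white vertex are cyclically consecutive, I would argue that if a white vertex $u$ had two incident boundary edges separated on both sides by other (necessarily internal) edges, these internal edges would lead into the graph and, by planarity together with the fact that all black vertices are trivalent and all white vertices touch the boundary, would enclose a region with no access to the boundary --- forcing an isolated component or contradicting connectedness and reducedness. More cleanly, since $u$ is adjacent to the boundary and has total degree $r$, the claim is that its boundary neighbors form a single claw in the sense of \Cref{sec:Fraser_map}; I would establish this by showing any interleaving internal edge would, by trip analysis, create a forbidden crossing. Finally, for the last assertion that consecutive white vertices (in the induced cyclic order) share a unique common neighbor, I would use the trivalence of black vertices together with the face structure: each black vertex, having degree $3$, sits between three white vertices, and consecutive white vertices in the cyclic order around the boundary are joined across exactly one such trivalent black vertex; uniqueness follows because two distinct shared black neighbors between the same pair of white vertices would bound a bigon or a face violating the square condition established above.

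The hard part will be the rigorous deduction that every face is a square, specifically ruling out hexagonal and larger faces using only the trivalence of black vertices and the adjacency-to-boundary of white vertices. The delicate point is that a white vertex may appear on a long face cycle while also reaching the boundary, so a naive degree count does not immediately close the cycle length; I expect the cleanest route is to combine the planarity constraints with the full-reducedness via \Cref{lem:fully-reduced-crossings}, showing that the trip segments tracing a would-be long face necessarily produce a bad double crossing. Care must be taken with the interaction between consecutive $\trip_i$ indices and the precise strand-tunneling bookkeeping of \Cref{lem:tunnel}, as in the proof of \Cref{thm:square-fully-reduced}.
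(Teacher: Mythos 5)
Your opening paragraph coincides exactly with the paper's setup: reducedness of $\widehat{G}$ via \Cref{prop:underlying-plabic-is-reduced}, then $w-b=2$ from \Cref{lem:white_minus_black}, $\left|\aexc(\trip(\widehat{G}))\right|=2$ from \Cref{lem:w-b-equals-aexc}, and finally \Cref{lem:structure-of-underlying-plabic}. The gap is precisely in the step you flag as ``the hard part,'' and it is not a technical gap but a wrong approach: ruling out faces of length $\geq 6$ by trip-segment analysis cannot work, because a long face is not, by itself, an obstruction to full reducedness. Full reducedness is hereditary (\Cref{prop:subHPG}), so if the trips around a hexagonal face forced a self-intersection or bad double crossing, then \emph{no} fully reduced hourglass plabic graph in any Pl\"ucker degree could contain such a face. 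This is false: for $r=3$ the fully reduced graphs are Kuperberg's non-elliptic webs, whose internal faces all have size at least $6$ by definition, and for $r=4$ the fully reduced graphs of the earlier paper contain hexagonal (``benzene'') faces. \Cref{thm:square-fully-reduced} constrains only squares, and \Cref{lem:fully-reduced-crossings} is not violated by the trips along a hexagon, so neither tool can close this step. What kills large faces here is the global Pl\"ucker-degree-two hypothesis, through counting, not any local crossing analysis.

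The paper's actual mechanism is an Euler-characteristic incidence count. With $n=2r$, one has $v=2r+w+b$, $e=2r+3b$ (every edge has a black endpoint: $2r$ boundary vertices of degree $1$ and $b$ trivalent internal black vertices), and $w=b+2$, whence the number of bounded faces is $f=b-1$. Each internal black vertex is incident to at most $3$ regions, and for each of the $w=b+2$ internal white vertices one exhibits a distinct incidence between some internal black vertex and the \emph{unbounded} region (via the ``missing edge'' $b_ib_{i+1}$ between consecutive boundary vertices served by different white vertices). Hence the bounded faces receive at most $3b-(b+2)=2(b-1)=2f$ black incidences, so the average number of black vertices per face is at most $2$; bipartiteness plus reducedness (no bigons) forces at least $2$, and equality says every face is a square. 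Crucially, the same inequality, rerun with the extra unbounded-region incidences that would otherwise exist, is what proves the cyclic-consecutiveness claim and the existence of a shared neighbor for consecutive white vertices---steps you handle with separate, vaguer arguments. Finally, your uniqueness argument is also off: two common black neighbors of $u,u'$ bound a $4$-cycle, which is a square and contradicts nothing about squareness; the paper instead notes that the black vertex trapped on the would-be face would have no third white vertex to attach to, contradicting its simple degree $3$.
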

\begin{proof}
    By \Cref{prop:underlying-plabic-is-reduced}, $H \coloneqq \widehat{G}$ is reduced, and, since $G$ is contracted and bipartite, so is $H$. Let $w$ and $b$ denote the number of internal white and black vertices of $H$ (equivalently, of $G$), respectively. 

    Since $w-b=2$ by \Cref{lem:white_minus_black}, we have that $\left|\aexc(\trip(H))\right|=2$ by \Cref{lem:w-b-equals-aexc}.
     Thus $H$ satisfies all of the hypotheses of \Cref{lem:structure-of-underlying-plabic}, so we may conclude that all internal black vertices have degree 3 and all internal white vertices are adjacent to the boundary.

    View $H$ as a planar embedded graph, ignoring its boundary circle (but retaining the boundary vertices). Let $v,e,$ and $f$ denote the number of vertices, edges, and faces, respectively, so that $v-e+f=1$. We have that $v=n+w+b=2r+w+b$ and that $e=2r+3b$, since $H$ is bipartite with $2r$ black vertices of degree 1 and $b$ black vertices of degree $3$. Again using the fact that $w-b=2$, we see that $f=b-1$. Let $a$ denote the average number of black vertices incident to a face of $H$. Each internal black vertex is incident to at most 3 faces. For each white vertex $u$, consider a boundary vertex $b_i$ adjacent to $u$ such that $b_{i+1}$ is adjacent to another white vertex $u' \neq u$, where indices are taken modulo $n$. Since $H$ is connected, if we were to add the edge between $b_i$ and $b_{i+1}$, there would be a face $F$ containing edges $b_i b_{i+1}, ub_i$ and $u'b_{i+1}$; furthermore, $F$ would contain some internal black vertex $x$ along the path from $u$ to $u'$. Since $H$ does not in fact contain the edge $b_i b_{i+1}$, there is no such face $F$, and we have found an incidence between $x$ and the unbounded region of $H$. This argument produces $w=b+2$ incidences between internal black vertices and the unbounded region of $H$. We now have
    \begin{equation}
    \label{eq:face-inequality}
                    3b-(b+2) \geq af = a(b-1),
    \end{equation}
    so $a \leq 2$. Since $H$ is reduced, it contains no $2$-gons; since it is bipartite, the smallest possible face is a square, so $a \geq 2$. Thus we must in fact have that $a=2$ and that all faces of $H$ (and hence of $G$) are squares.

    If the set of boundary vertices incident to some internal white vertex $u$ was not cyclically consecutive, then there would be multiple choices for the boundary vertex $b_i$. The analogue of \eqref{eq:face-inequality} would then give $a<2$, an impossibility. Thus the internal white vertices have a cyclic order induced by the cyclic order on the boundary vertices adjacent to them. If two consecutive white vertices $u$ and $u'$ did not share a neighbor, then the path considered above between $u$ and $u'$ would contain at least two internal black vertices, again yielding the contradiction $a<2$. Finally, $u$ and $u'$ cannot share more than one such neighbor, otherwise the internal black vertex on the would-be face $F$ would have no third white vertex to connect to, contradicting the fact that internal black vertices in $G$ have simple degree 3. 
\end{proof}

\begin{proof}[Proof of \Cref{thm:fully-reduced-implies-fraser}]  
    We first consider the case in which $G$ is disconnected. By degree counting as in \Cref{lem:white_minus_black}, there must be two connected components, each with $w-b=1$. Thus $G$ is the disjoint union of two contracted fully reduced hourglass plabic graphs of Pl\"{u}cker degree one. By \Cref{prop:plucker-degree-one-implies-star}, each of these is a star graph (see \Cref{fig:star-graph}). If the two connected components contain boundary vertices $\{b_1,\ldots,b_r\}$ and $\{b_{r+1},\ldots,b_{2r}\}$, respectively, then this stitched graph is the image under $\mathcal{F}$ of the \emph{column superstandard} tableau containing entries $1, \dots, r$ in its left column. Thus $G$ is Fraser.

    Now assume that $G$ is connected, so that $G$ satisfies the hypotheses of \Cref{lem:faces-are-squares}. We will use $G$ to construct a weighted triangulation $\mathfrak{t}$ of a $w$-gon $P$; see \Cref{fig:polygon-construction}. Identify the vertices of $P$ with the internal white vertices of $G$, ordered around $P$ consistently with the cyclic order on these vertices (guaranteed to exist by \Cref{lem:faces-are-squares}). For each internal square $F$ of $G$, draw a diagonal of $P$ between the two white vertices contained in $F$. By the proof of \Cref{lem:faces-are-squares}, there are $b-1=w-3$ such squares, and hence the same number of diagonals. At most one diagonal is drawn between each pair of vertices of $P$, since if two white vertices were both contained in two squares of $G$, there would be an internal black vertex of degree 2, contradicting \Cref{lem:structure-of-underlying-plabic}. Thus we have defined a dissection of the $w$-gon $P$ with $w-3$ diagonals. Such a dissection must be a triangulation. Finally, give the diagonal associated to a square $F$ the weight $r-m(F)$; by \Cref{thm:square-fully-reduced} and the fact that $G$ is fully reduced, this weight is nonnegative. To each boundary edge $e$ of $P$, assign weight equal to the multiplicity of the hourglass opposite $e$ incident to the unique black vertex contained in the triangle bounded by $e$.

    \begin{figure}[htbp]
        \centering
        \includegraphics[scale=1.55]{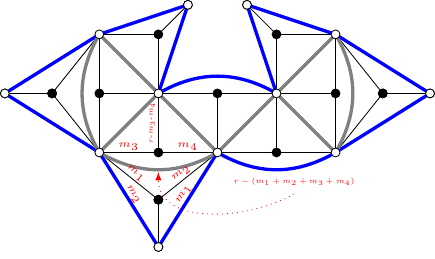}
        \caption{The polygon $P$ (blue) with its weighted triangulation $\mathfrak{t}$ (gray) constructed from the hourglass plabic graph $G$ (black) as in the proof of \Cref{thm:fully-reduced-implies-fraser}.}
        \label{fig:polygon-construction}
    \end{figure}

    We now argue that $G$ is the image of the weighted triangulation $\mathfrak{t}$ under Fraser's map $\mathscr{W}$. All that remains to be verified is that the edge weights in $\mathfrak{t}$ are correct. 
    The weights of the boundary edges in each ear agree with Fraser's construction by definition. We may remove an ear from the graph and polygon, producing a new triangulation $\mathfrak{t}'$ with one fewer triangle, while changing the weight of the non-boundary edge of the removed triangle from $r-(m_1+m_2+m_3+m_4)$ to $r-(m_3+m_4)$ (see \Cref{fig:polygon-construction}). This operation preserves the total weight of the triangulation. By induction, we see that weights of $\mathfrak{t}$ are as prescribed by Fraser's map.
\end{proof}

\begin{remark}
    The only properties of fully reduced graphs that we have relied upon in the proof of \Cref{thm:fully-reduced-implies-fraser} are that square faces $F$ in such graphs have $m(F) \leq r$ (\Cref{thm:square-fully-reduced}) and that the underlying plabic graph is reduced (\Cref{prop:underlying-plabic-is-reduced}).
\end{remark}

\begin{proof}[Proof of Theorems 1.1 and 1.2]
\Cref{thm:fraser-implies-fully-reduced,thm:fully-reduced-implies-fraser} together imply that the set of graphs resulting from Fraser's construction over all $T\in\SYT(r\times 2)$ are precisely the contracted fully reduced $r$-hourglass plabic graphs of standard type and Pl\"ucker degree two. \Cref{prop:Fraser_web_invariant} states that the  tensor invariant of $\mathcal{F}(T)$ does not depend on the choices involved in the construction, thus all such graphs in a square move equivalence class yield the same tensor invariant. Since rotation of a Fraser graph produces another Fraser graph, the basis is rotation-invariant, completing the proof of \Cref{thm:basis}.   

\Cref{thm:trip=prom} establishes the $\trip_{\bullet}=\prom_{\bullet}$ property of \Cref{thm:bijection}. Furthermore, since promotion applied to $T\in\SYT(r\times 2)$ rotates the underlying perfect matching $\mathscr{M}(T)$ and evacuation reflects $\mathscr{M}(T)$, we have that promotion and evacuation of tableaux correspond to rotation and reflection of hourglass plabic graphs, completing the proof of \Cref{thm:bijection}.
\end{proof}

\section{Pl\"{u}cker degree one}\label{sec:1column}

In this short section we show that fully reduced hourglass plabic graphs also provide a web basis in Pl\"{u}cker degree one. \Cref{prop:plucker-degree-one-implies-star} is used in the proof of \Cref{thm:fully-reduced-implies-fraser}. (Note that nothing in this section relies on results of \Cref{sec:fr_is_fraser}.)

The proof of \Cref{prop:plucker-degree-one-implies-star} relies on the difficult \cite[Thm.~13.4]{Postnikov-arxiv}; we are unaware of a simple direct proof.

\begin{proposition}
\label{prop:plucker-degree-one-implies-star}
    The unique contracted fully reduced $r$-hourglass plabic graph of Pl\"{u}cker degree one is the star graph (see \Cref{fig:star-graph}).
\end{proposition}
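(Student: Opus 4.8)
The plan is to descend to the underlying plabic graph $H \coloneqq \widehat{G}$, pin down its trip permutation exactly, and then exploit the rigidity of the star graph under moves. First I would record that $G$ must be connected: running the degree count of \Cref{lem:white_minus_black} on a single connected component with $n_i$ boundary vertices and $w_i, b_i$ internal white and black vertices gives $n_i = r(w_i-b_i)$, so each component has positive integer Pl\"ucker degree $w_i - b_i \geq 1$; since these sum to the total Pl\"ucker degree $1$, there is exactly one component. By \Cref{prop:underlying-plabic-is-reduced}, $H$ is reduced, and it is bipartite and contracted because $G$ is.

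Next I would identify $\trip(H) = \trip_1(G)$ precisely. Combining \Cref{lem:white_minus_black} ($w-b=1$) with \Cref{lem:w-b-equals-aexc} yields $|\aexc(\trip(H))| = 1$. For $r \geq 2$ the permutation is moreover fixed-point free: if $\trip_1(G)(j)=j$ then, as in the proof of \Cref{prop:underlying-plabic-is-reduced}, the internal vertex incident to $b_j$ would be a leaf, but a leaf incident to a degree-one boundary vertex would itself have degree $1 \neq r$. An elementary argument then forces a fixed-point-free permutation of $[n]$ with a single antiexcedance to be the long cycle $\sigma\colon i \mapsto i+1 \bmod n$: the value $n$ is the unique antiexcedance position, and working downward each $j<n$ must satisfy $\trip(H)(j)=j+1$. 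The identical reasoning applies to the star graph, which is reduced and bipartite with $w=1$, $b=0$, hence is also fixed-point free with trip permutation $\sigma$.

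Finally I would invoke rigidity. Both $H$ and the star are reduced plabic graphs with the same (undecorated, since fixed-point free) trip permutation $\sigma$, so by \cite[Thm.~13.4]{Postnikov-arxiv} they are move-equivalent; being contracted and weakly bipartite, they are in fact connected by a sequence of bipartite square moves, exactly as in the proof of \Cref{lem:structure-of-underlying-plabic}. The star has no internal faces at all, as each of its regions meets the boundary circle, so no bipartite square move applies to it. Since such moves are invertible, any sequence joining the star to $H$ must be empty, forcing $H$ to equal the star graph. Lifting back to $G$, its unique internal white vertex has degree $r$ and is joined to all $r$ boundary vertices, necessarily by simple edges since boundary vertices have degree one; hence $G$ is the star graph.

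The main obstacle is the appeal to \cite[Thm.~13.4]{Postnikov-arxiv}, the deep statement that reduced plabic graphs sharing a trip permutation are move-equivalent; this is precisely the ``difficult'' ingredient flagged before the proposition and is what makes a fully elementary argument elusive. A secondary point requiring care is the rigidity claim itself---verifying that the star admits no nontrivial move---together with the small-$r$ bookkeeping needed to justify fixed-point freeness.
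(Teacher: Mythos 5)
Your proposal is correct and takes essentially the same route as the paper's proof: pass to the underlying plabic graph $H=\widehat{G}$ (reduced by \Cref{prop:underlying-plabic-is-reduced}), combine $w-b=1$ with \Cref{lem:w-b-equals-aexc} and fixed-point-freeness to force $\trip(H)$ to be the long cycle $i\mapsto i+1$, invoke \cite[Thm.~13.4]{Postnikov-arxiv} to get move-equivalence with the star, and finish by the rigidity of the star (no square moves apply) plus contractedness. The only loose end is the trivial case $r=1$, where fixed-point-freeness fails and which the paper dispatches separately in one sentence; your explicit connectedness paragraph, while harmless, is not actually needed since the long-cycle trip permutation already forces connectivity.
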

\begin{proof}
    It is easy to see that the star graph $S_r$ is indeed a contracted fully reduced $r$-hourglass plabic graph of Pl\"{u}cker degree one. 
    
    Now, let $G$ be any contracted fully reduced $r$-hourglass plabic graph of Pl\"{u}cker degree one. If $r=1$, there is only one possible hourglass plabic graph, a star, so we are done. Thus assume that $r \geq 2$. By \Cref{prop:underlying-plabic-is-reduced}, $H\coloneqq  \widehat{G}$ is reduced, and, since $G$ is of standard type, none of the internal vertices adjacent to a boundary vertex are leaves; thus $\trip(H)$ has no fixed points. Since $w-b=1$, we have $\left|\aexc(\trip(H))\right|=1$. The only possibility is $\trip(H)=23\ldots r 1$. Note that $S_r$ is a reduced plabic graph with trip permutation $23\ldots r 1$. Thus by \cite[Thm.~13.4]{Postnikov-arxiv}, $H$ is move-equivalent to $S_r$. Since $S_r$ admits no moves other than (un)contraction moves, and since $H$ is contracted, we conclude that $H$ and $G$ are both star graphs.
\end{proof}

\begin{figure}[htbp]
    \centering
    \includegraphics[scale=0.7]{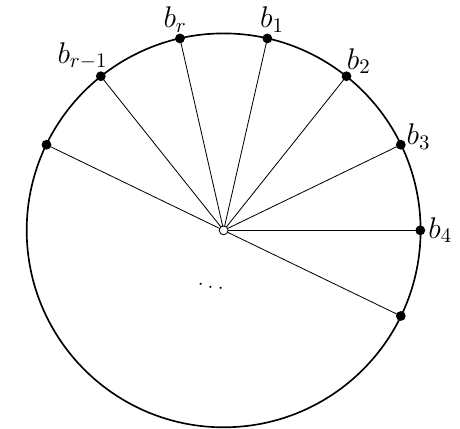}
    \caption{The star graph $S_r$, referred to in \Cref{prop:plucker-degree-one-implies-star}.}
    \label{fig:star-graph}
\end{figure}

Note that for each $r$, there is a unique standard Young tableau $T_r$ of shape $r \times 1$. The theorem below follows trivially from \Cref{prop:plucker-degree-one-implies-star}, but we state it in a form paralleling \Cref{thm:basis,thm:bijection} in order to highlight the uniformity of the framework of fully reduced hourglass plabic graphs.

\begin{theorem}\label{thm:1column}
The collection $\mathscr{B}_q^{r}$ of tensor invariants of fully reduced $r$-hourglass plabic graphs of Pl\"{u}cker degree one is a rotation-invariant web basis for  $\Inv_{U_q(\fsl_r)}(V_q^{\otimes r})$. In addition, we have $\trip_{\bullet}(S_r)=\prom_{\bullet}(T_r)$.
\end{theorem}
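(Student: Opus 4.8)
The plan is to exploit the fact, recorded in \Cref{prop:plucker-degree-one-implies-star}, that the combinatorics degenerates completely, so that there is a single graph to analyze. First I would observe that contraction moves preserve web invariants (they are relations in the web calculus) and preserve the fully reduced property, so every fully reduced $r$-hourglass plabic graph of Pl\"ucker degree one contracts to a contracted fully reduced graph with the same tensor invariant, which by \Cref{prop:plucker-degree-one-implies-star} is the star graph $S_r$. Hence the collection $\mathscr{B}_q^r$ consists of the single tensor invariant of $S_r$.

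Second, I would show this singleton is a basis by a dimension count. The space $\Inv_{U_q(\fsl_r)}(V_q^{\otimes r})$ is one-dimensional: at $q=1$ this is the classical first fundamental theorem of invariant theory for $\SL_r$, since the only invariant of $r$ copies of the standard representation is the bracket; equivalently, the multiplicity of the trivial summand in $V^{\otimes r}$ equals the number of elements of $\SYT(r\times 1)$, which is $1$, and quantization does not change this dimension. Since the invariant of $S_r$ is nonzero (it is the determinant invariant, and in any case web invariants of fully reduced graphs do not vanish), a nonzero vector in a one-dimensional space spans, so $\mathscr{B}_q^r$ is a basis. Rotation-invariance is then immediate: diagrammatic rotation implements the cyclic relabeling $b_j \mapsto b_{j+1}$ of boundary vertices, which carries $S_r$ to itself, so the one-element collection $\mathscr{B}_q^r$ is trivially closed under rotation, up to the expected sign.

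Finally, I would verify $\trip_\bullet(S_r) = \prom_\bullet(T_r)$ by computing both sides explicitly. Since $T_r$ is the single-column tableau $1, 2, \ldots, r$, it is fixed by $\promotion$, and in each application of promotion the entry $i+1$ slides from row $i+1$ to row $i$; by the definition of the promotion permutation this gives $\prom_i(T_r)(j) \equiv (i+1) + (j-1) \equiv i+j \pmod r$. On the graph side, $S_r$ has a unique internal vertex, so a $\trip_i$-segment simply enters the center, takes the $i$-th turn, and exits, advancing the boundary index by $i$, whence $\trip_i(S_r)(j) \equiv i+j \pmod r$. The two permutations agree.

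Given the degeneracy enforced by \Cref{prop:plucker-degree-one-implies-star}, I expect no step to present a genuine obstacle; the only care needed is in the bookkeeping of the turning conventions in the trip computation, so that the $i$-th turn advances the boundary index in the same direction as promotion. This is pinned down by the already-known base case $\trip_1(S_r) = \trip(\widehat{S_r}) = 23\cdots r 1$ from the proof of \Cref{prop:plucker-degree-one-implies-star}, together with the correct citation of the dimension and nonvanishing inputs used in the basis claim.
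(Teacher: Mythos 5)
Your proposal is correct and follows essentially the same route as the paper: reduce to the star graph $S_r$ via \Cref{prop:plucker-degree-one-implies-star}, identify the span with the one-dimensional space $\Inv_{U_q(\fsl_r)}(V_q^{\otimes r})$ using $|\SYT(r\times 1)|=1$, note rotation-invariance of $S_r$, and verify directly that both $\trip_i(S_r)$ and $\prom_i(T_r)$ are $j \mapsto j+i \bmod r$. The extra care you take (reducing non-contracted graphs by contraction moves, and checking $[S_r]_q \neq 0$ so that a single vector spans) only makes explicit details the paper leaves implicit.
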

\begin{proof}[Proof of \Cref{thm:1column}]
By \Cref{prop:plucker-degree-one-implies-star}, we have $\mathscr{B}_q^r = \{[S_r]_q\}$. Since $|\SYT(r \times 1)|=1$, we see that indeed this is a basis for the 1-dimensional space $\Inv_{U_q(\fsl_r)}(V_q^{\otimes r})$; it is rotation-invariant since $S_r$ is invariant under rotation. Finally, it is easy to check directly that $\trip_i(S_r)=\prom_i(T_r)$ are both given by $j \mapsto j+i \mod{r}$.
\end{proof}

\section{Combinatorial applications of the web basis}
\label{sec:combinatorial_consequences}
In this section, we discuss some combinatorial consequences of our web basis. We first isolate a particular extreme
move-equivalence class relating to the Tamari lattice. We then re-prove a cyclic sieving result. 
\subsection{The Tamari lattice}
\label{sec:tamari}
As shown in the proof of \cite[Prop.~1.13]{Fraser-2-column} and depicted in \Cref{fig:FraserMap,fig:triflip}, square moves correspond to flips of weight $0$ diagonals in the triangulation produced in the construction of $\mathcal{F}(T)$. For the ``superstandard'' SYT constructed by filling the numbers in order left to right, then top to bottom, this construction gives diagonals all of weight zero, thus the move equivalence class is counted by the Catalan numbers and is indeed equivalent to the Tamari lattice; see \Cref{prop:superstandard} below. In \cite[Prop 8.2]{Gaetz.Pechenik.Pfannerer.Striker.Swanson:4row}, we showed the square move equivalence class of the hourglass plabic graph corresponding to the superstandard $4\times d$ SYT is in bijection with the set of $d\times d$ alternating sign matrices. We also found plane partitions in an $a\times b\times c$ box as a move-equivalence class of hourglass plabic graphs for a certain $4$-row fluctuating tableau \cite[Prop 8.3]{Gaetz.Pechenik.Pfannerer.Striker.Swanson:4row}. (In both cases, the moves correspond to the cover relations in the lattice.) Thus, the proposition below detects yet another important class of combinatorial objects within the hourglass plabic graph framework.

\begin{proposition}
\label{prop:superstandard}
    Let $T$ be the ``superstandard'' tableau in $\SYT(r\times 2)$ constructed by filling the numbers in order left to right, then top to bottom. The square move equivalence class of $\mathcal{F}(T)$ is in bijection with triangulations of an $r$-gon connected by diagonal flips, i.e.\ the Tamari lattice. Thus, it is counted by the $(r-2)$-nd Catalan number: $\frac{1}{r-1}\binom{2(r-2)}{r-2}$.
\end{proposition}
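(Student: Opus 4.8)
The plan is to trace Fraser's map $\mathcal{F}$ explicitly through each of its steps $\mathfrak{W}\circ\mathfrak{t}\circ\dis\circ\mathfrak{M}$ on the superstandard tableau $T$ and to observe that the resulting weighted dissection is the ``empty'' dissection of an $r$-gon, carrying no interior diagonals. Once this is known, the triangulation step ranges freely over all triangulations of the $r$-gon, and the identification with the Tamari lattice follows from the already-established correspondence between square moves and diagonal flips.

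First I would compute $\mathfrak{M}(T)$. For the superstandard $T$ the first column is $\{1,3,\dots,2r-1\}$ and the second column is $\{2,4,\dots,2r\}$, so openers and closers alternate around the boundary. The unique noncrossing matching with these openers is therefore the collection of short arcs $\{2k-1,2k\}$ for $1\le k\le r$; in particular all $r$ arcs are short, so $s=r$. Next I would identify $\dis(\mathfrak{M}(T))$. The cyclic left endpoints of the short arcs are $i_k=2k-1$, so the claw sets are $C_k=\{2k,2k+1\}$ (indices mod $2r$), each of size $2$, and the arc $\{2k-1,2k\}$ joins the consecutive claw sets $C_{k-1}$ and $C_k$. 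Hence $\dis(\mathfrak{M}(T))$ is the $r$-gon with every boundary edge of weight $1$ and \emph{no} interior diagonals. Consequently every diagonal introduced in the step $\mathfrak{t}$ has weight $0$, and the triangulations arising in the construction are exactly the triangulations of the $r$-gon.

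I would then invoke the correspondence between square moves and flips of weight-$0$ diagonals, established in the proof of \cite[Prop.~1.13]{Fraser-2-column} and recorded in \Cref{prop:Fraser_web_invariant}: a single square move on $\mathcal{F}(T)$ corresponds to a single flip of the underlying triangulation, and any two triangulations are connected by such flips. Thus the contracted graphs in the square-move equivalence class of $\mathcal{F}(T)$, together with the square moves joining them, form a graph isomorphic to the flip graph of triangulations of the $r$-gon. Orienting these flips gives the cover relations of the Tamari lattice, and the number of triangulations of a convex $r$-gon is the $(r-2)$-nd Catalan number $\frac{1}{r-1}\binom{2(r-2)}{r-2}$, which yields the claimed count.

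The main obstacle will be verifying that this is a genuine bijection rather than merely a surjection: I must check that distinct triangulations of the $r$-gon produce non-isotopic hourglass plabic graphs, so that the move-equivalence class is in bijection with, and not just covered by, triangulations. This follows because each weight-$0$ diagonal of $\mathfrak{t}$ corresponds to a distinguished square face of $\mathcal{F}(T)$ whose two white vertices are precisely the endpoints of that diagonal; hence the combinatorial type of the graph both determines and is determined by the triangulation. The dissection and Catalan computations above are routine, so the essential content is this matching of square faces with diagonals together with the square-move/flip correspondence.
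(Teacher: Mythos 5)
Your proposal is correct and follows essentially the same route as the paper's proof: compute $\mathfrak{M}(T)$ as the matching of all short arcs $\{2k-1,2k\}$, observe that $\dis(\mathfrak{M}(T))$ is the $r$-gon with no interior diagonals so that the triangulation step ranges freely over all triangulations, and invoke the square-move/weight-zero-diagonal-flip correspondence to identify the move-equivalence class with the Tamari lattice and count it by the Catalan number. If anything, your write-up is slightly more careful than the paper's: your claw sets $C_k=\{2k,2k+1\}$ follow the definition in \Cref{sec:Fraser_map} exactly (the paper's proof instead writes the claw sets as the arcs $\{2i-1,2i\}$ themselves, a harmless slip), and your explicit injectivity check---that each weight-zero diagonal is recoverable as a square face whose two white vertices are its endpoints---is left implicit in the paper, though it is confirmed by the reconstruction in the proof of \Cref{thm:fully-reduced-implies-fraser}.
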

\begin{proof}
    The matching $\mathscr{M}(T)$ consists of arcs connecting $2i-1$ and $2i$ for $1\leq i\leq r$. Thus the claw sets are given as $C_j=\{2i-1,2i\}$, so there are $r$ claw sets, each of cardinality $2$. As there are no arcs in $\mathscr{M}(T)$ between claw sets, we have that $\dis(\mathscr{M}(t))$ is the weighted dissection of an $r$-gon in which there are no internal edges. Then in the construction of the triangulation $\mathfrak{t}(\dis(\mathscr{M}(T)))$, we have a choice amongst all triangulation of the $r$-gon; these are well-known to be counted by the $(r-2)$-nd Catalan number $\frac{1}{r-1}\binom{2(r-2)}{r-2}$ (see e.g.~\cite[p.15]{Stanley-catalan}). The poset constructed via diagonal flips is a lattice, called the \emph{Tamari lattice} (see e.g.~\cite[p.16,119]{Stanley-catalan}). 
\end{proof}

\subsection{Rectangular cyclic sieving}

 A major motivation for the dynamical algebraic combinatorics community to study webs involves the \emph{cyclic sieving phenomenon}     \cite{Reiner-Stanton-White}. Briefly, the orbit structure of a cyclic group action on a finite set is encoded in evaluations of a polynomial at complex roots of unity. A celebrated result of Rhoades \cite{Rhoades} provides such a description for the promotion  action $\promotion$ on $\SYT(r \times d)$ using the \emph{$q$-hook length polynomial}
  \[ f^\lambda(q) \coloneqq \frac{[n]_q!}{\prod_{c \in \lambda} [h_c]_q}. \]
See \cite[Cor.~7.21.6]{Stanley:EC2}, \cite{Rhoades}, or \Cref{thm:csp} for details.

Rhoades' proof uses Kazhdan--Lusztig theory. The $3$-row case was re-proven in \cite{Petersen-Pylyavskyy-Rhoades} using Kuperberg's non-elliptic web basis \cite{Kuperberg}. The authors' recent $4$-row web basis extended this alternate approach to Rhoades' result to the $4$-row case as well \cite[Cor.~8.1]{Gaetz.Pechenik.Pfannerer.Striker.Swanson:4row}.

We now sketch how Rhoades' result follows from the existence of a rotation-invariant fully reduced web basis satisfying our combinatorial framework from \Cref{thm:basis,thm:bijection}, which at present includes $\leq 4$ rows or $\leq 2$ columns. As a practical matter, cyclic sieving proofs frequently involve carefully tracking negative signs. In our argument, this is done through the fully reduced condition on ``middle'' $\trip$-strands.

\begin{theorem}[See {\cite[Thm.~1.3]{Rhoades}}]{\label{thm:csp}}
    Suppose there is a basis $\mathscr{B}_q^{rd}$ of $\Inv_{U_q(\fsl_r)}(V_q^{\otimes rd})$ consisting of tensor invariants of contracted fully reduced $r$-hourglass plabic graphs. Further suppose there is a bijection from $\SYT(r \times d)$ to these basis graphs which intertwines promotion and rotation.
    
    In this case, the number of elements in $\SYT(r \times d)$ fixed by $\promotion^k$ is $f^\lambda(\zeta^k)$ where $\lambda = (d, \ldots, d) = (d^r)$, $n=rd$, and $\zeta = e^{2\pi i/n}$.
\end{theorem}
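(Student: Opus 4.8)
The plan is to compute the trace of the rotation operator on the invariant space $W \coloneqq \Inv_{U_q(\fsl_r)}(V_q^{\otimes n})$ in two ways and to combine the results via the Reiner--Stanton--White criterion \cite{Reiner-Stanton-White}. Since promotion, rotation, and the fixed-point counts in question are all insensitive to $q$, I would first specialize to $q=1$ and apply Schur--Weyl duality. For $\SL_r$ one has $\Inv_{\SL_r}(V^{\otimes n}) \cong S^{(d^r)}$ as $\mathfrak{S}_n$-modules (the only partition of $n=rd$ into at most $r$ rows whose $GL_r$-Schur functor restricts to the trivial $\SL_r$-representation is the full-height rectangle $(d^r)$), where $\mathfrak{S}_n$ acts by permuting tensor factors. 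Under this identification the cyclic rotation of tensor factors is precisely the long cycle $\sigma = (1\,2\,\cdots\,n)$, so rotation of webs realizes, up to an overall sign coming from diagrammatic conventions, the operator by which $\sigma^k$ acts on $S^{(d^r)}$.

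First I would compute $\operatorname{Tr}(\sigma^k \mid S^{(d^r)})$ by representation theory. This is the irreducible character value $\chi^{(d^r)}(\sigma^k)$, and since each $\sigma^k$ (for $1 \le k \le n-1$) is a regular element of $\mathfrak{S}_n$ with regular eigenvalue $\zeta^k$, Springer's theory of regular elements identifies this character value with $f^{(d^r)}(\zeta^k)$ up to an explicit root-of-unity phase; equivalently, one may argue via the Murnaghan--Nakayama rule together with the root-of-unity evaluation of the $q$-hook-length formula. Crucially, this input is a purely $\mathfrak{S}_n$-theoretic fact and does not presuppose the promotion cyclic sieving that we are trying to establish.

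Next I would compute the same trace in the web basis. By the hypothesis of rotation-invariance, $\sigma^k$ carries each basis vector to $\pm$ the basis vector indexed by the corresponding rotated graph, so in this basis $\sigma^k$ is a signed permutation matrix whose underlying permutation is, via the given bijection intertwining promotion and rotation, the map $\promotion^k$ on $\SYT(r\times d)$. Hence $\operatorname{Tr}(\sigma^k \mid W) = \sum_{\promotion^k(T)=T} \epsilon_k(T)$, a \emph{signed} count of the $\promotion^k$-fixed tableaux, where $\epsilon_k(T) \in \{\pm 1\}$ is the sign accumulated as the fixed web rotates back to itself. Equating this with the representation-theoretic value gives $\sum_{\promotion^k(T)=T} \epsilon_k(T) = f^{(d^r)}(\zeta^k)$ up to the phase from the previous step.

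The hard part will be the sign reconciliation needed to upgrade this \emph{signed} count to the genuine cyclic sieving statement $|\SYT(r\times d)^{\promotion^k}| = f^{(d^r)}(\zeta^k)$: I must show that the signs $\epsilon_k(T)$ are all equal and exactly absorb the Springer phase, so that the signed count collapses to the plain cardinality. This is where the fully reduced condition does the real work. The sign $\epsilon_k(T)$ is governed by the framing and twist picked up by the ``middle'' trip strands $\trip_{\lfloor r/2\rfloor}$ and $\trip_{\lceil r/2\rceil}$ as the web returns to itself, and the defining prohibition on bad double crossings between $\trip_i$- and $\trip_{i+1}$-segments forces the relevant winding parity to be the expected one. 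Tracking this sign carefully, and matching it against the root-of-unity factor so that the two cancel, is the delicate crux; once it is done, the two trace computations coincide and yield the theorem, with the remaining steps being either standard representation theory or immediate from the stated hypotheses.
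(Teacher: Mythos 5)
Your overall strategy coincides with the paper's: evaluate the trace of $\sigma^k$ on $\Inv(V^{\otimes n})\cong S^{(d^r)}$ once by Springer's theory of regular elements (giving $\chi^\lambda(\sigma^k)=(-1)^{k(r-1)}f^\lambda(\zeta^k)$) and once in the web basis, where by rotation-invariance $\sigma^k$ acts as a signed permutation matrix whose underlying permutation is $\promotion^k$ under the assumed bijection. The problem is that the one step you yourself flag as ``the delicate crux'' --- showing that each fixed basis web contributes exactly the sign $(-1)^{k(r-1)}$, so that the signed fixed-point count collapses to the plain cardinality --- is precisely the content of the theorem beyond standard representation theory, and your proposal neither carries it out nor describes a mechanism that would. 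Asserting that ``the prohibition on bad double crossings \dots forces the relevant winding parity to be the expected one'' is not an argument: full reducedness by itself produces no sign, and the appeal to ``framing and twist'' of middle strands leaves open both what the sign convention is and how a single rotation step changes it.

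What the paper actually supplies at this point, and what is missing from your sketch, is a concrete tagging construction for even $r$ (for odd $r$ the tag-moving signs $(-1)^{m(r-m)}$ are all $+1$ and the Springer phase vanishes, so there is nothing to reconcile). A web determines its invariant only up to sign; one must fix a tag (a linear order on the strands) at each interior vertex, and moving a tag past one strand costs $(-1)^{r-1}=-1$. The paper's convention: since $r$ is even, $\trip_{r/2}$ is a fixed-point-free involution, and full reducedness guarantees that the non-trivial $\trip_{r/2}$-segments through a given interior vertex reach the boundary without double crossings, hence cut the disk into well-defined regions; the (possibly virtual) tag of each vertex is placed in the region containing the base face. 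One then computes the cost of restoring this convention after one rotation step: the vertices whose tags must move are exactly those along the $\vec{\trip}_{r/2}$-segment that rotates from starting at $b_1$ to starting at $b_n$, each tag moving past a single strand, and bipartiteness with all-black boundary vertices forces the number of interior vertices on that segment to be \emph{odd}, giving a net sign of $-1$ per step, i.e.\ $(-1)^k=(-1)^{k(r-1)}$, which cancels the Springer phase. Note the decisive parity comes from bipartiteness, not from the no-double-crossing condition (which serves only to make the tagging convention well defined); this distinction is the gap between your outline and a proof.
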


\begin{proof}
    (Sketch.) Let $\sigma = (1\,2\,\cdots\,n) \in S_n$. By Springer's theory of regular elements \cite[Prop.~4.5]{Springer}, we have $\chi^\lambda(\sigma^k) = (-1)^{k(r-1)} f^\lambda(\zeta^k)$, where $\chi^\lambda$ is the character of the Specht module $S^\lambda \cong \Inv(V^{\otimes n})$. The action of $\sigma$ corresponds to web rotation. When $r$ is odd, the sign $(-1)^{k(r-1)}$ disappears, and the result follows from the assumed $\promotion$-equivariance.

    When $r$ is even, more care is needed. In order to completely interpret $r$-hourglass plabic graphs as tensor invariants, we must fix a global sign through the use of \emph{tags}; see \cite[\S2.1]{Gaetz.Pechenik.Pfannerer.Striker.Swanson:4row} for detailed discussion. Equivalently, this requires choosing a clockwise linear order on hourglass edges incident to each interior vertex, where two such orders related by cycling a single $m$-hourglass edge are related by the sign $(-1)^{m(r-m)}$. When $r$ is odd, these signs are all $1$, and tags may be ignored. To handle the even $r$ case, we choose a tagging convention as follows.
    
    First, we extend the model slightly by allowing ``virtual'' tags between strands of a single hourglass. Moving a possibly virtual tag over a single strand results in a sign of $(-1)^{r-1}$, which is consistent with the above.
    
    Since $r$ is even, $\trip_{r/2}$ is a fixed-point-free involution, so the corresponding segments form a perfect matching. Fix an interior vertex $v$. Since $v$ is $r$-valent, there are $r/2$ $\trip_{r/2}$-segments passing through $v$. Some of these segments may be trivial and loop inside a single hourglass of high multiplicity. Since the graph is fully reduced, the non-trivial segments through $v$ reach the boundary at distinct points without double crossing. At least one such segment exists since the graph is contracted and has standard type. Such segments carve the web into regions. Place the virtual tag of $v$ in the region containing the base face.

    If we rotate such a tagged basis web one step, the tags rotate as well. Adjusting them to fit the above tagging convention will involve a sign, which we show is $-1$ and which will complete the proof of the result. Follow the $\vec{\trip}_{r/2}$-segment which is rotated from starting at $1$ to starting at $n$. The interior vertices encountered are precisely those whose tags need to be adjusted, each by moving past a single strand. Since the hourglass plabic graph is bipartite with black boundary vertices, there are an odd number of such interior vertices along this segment, and the theorem follows.
\end{proof}

\section*{Acknowledgements}
Pechenik and Pfannerer were partially supported by Pechenik's Discovery Grant (RGPIN-2021-02391) and Launch Supplement (DGECR-2021-00010) from the Natural Sciences and Engineering Research Council of Canada. Pfannerer was also partially supported by the Austrian Science Fund (FWF) P29275, Olya Mandelshtam's Discovery Grant (RGPIN-2021-02568), and was a recipient of a DOC Fellowship of the Austrian Academy of Sciences. Striker was partially supported by Simons Foundation gift MP-TSM-00002802 and NSF grant DMS-2247089. Swanson was partially supported by NSF grant DMS-2348843.

We thank Chris Fraser for his helpful comments and Dutch Hansen for writing valuable computer code. Work on this project was carried out during our visits to North Dakota State University (partially supported by NSF DMS-2247089) and to the Institute for Computational and Experimental Research in Mathematics in Providence, RI (through the Collaborate@ICERM program supported by NSF DMS-1929284). We are grateful for the excellent working environments provided by both of these institutions. We also thank the anonymous referee for their very insightful comments and suggestions.

\bibliographystyle{amsalphavar}
\bibliography{main}
\end{document}